\documentclass[a4paper,11pt]{article}
\usepackage[utf8x]{inputenc}
\usepackage{amsmath}
\usepackage{amssymb}
\usepackage{amsfonts}
\usepackage{amsthm}
\usepackage[alphabetic]{amsrefs}
\usepackage{a4wide}
\usepackage[pagebackref=true]{hyperref}
\usepackage{authblk}
\usepackage[usenames, dvipsnames]{color}

\renewcommand{\labelenumi}{(\roman{enumi})} 
\newtheorem{Zae}{Zae}[section] 
\newtheorem{lemma}[Zae]{Lemma} 
\newtheorem{prop}[Zae]{Proposition}
\newtheorem{theorem}[Zae]{Theorem} 
 
\newtheorem{coro}[Zae]{Corollary} 
\theoremstyle{definition}
\newtheorem{definition}[Zae]{Definition} 
\newtheorem{remark}[Zae]{Remarks}
\newtheorem{example}[Zae]{Example}
%\newcommand{\qed}{\raisebox{-.8ex}{$\Box$}}
%\newenvironment{proof}%%
%{\noindent{\bf Proof.}}% teil1
%{\hfill \qed\\}% teil2
\newcommand{\ttau}{\overline{\tau}}
\newcommand{\UU}{\overline{U}}
\newcommand{\la}{\langle}
\newcommand{\ra}{\rangle}
\newcommand{\Aut}{\mathrm{Aut}}
\newcommand{\Inn}{\mathrm{Inn}}
\newcommand{\con}{\mathrm{con}}
\newcommand{\Sym}{\mathrm{Sym}}
\DeclareMathOperator{\chara}{char}
\newcommand{\PGL}{\mathrm{PGL}}
\newcommand{\PGaL}{\mathrm{P}\Gamma\mathrm{L}}
\newcommand{\PSL}{\mathrm{PSL}}
\newcommand{\SL}{\mathrm{SL}}
\newcommand{\SU}{\mathrm{SU}}
\newcommand{\ZZ}{\mathbf{Z}}
\newcommand{\NN}{\mathbf{N}}
\newcommand{\FF}{\mathbf{F}}
\newcommand{\QQ}{\mathbf{Q}}
\newcommand{\RR}{\mathbf{R}}
\newcommand{\CC}{\mathbf{C}}
\newcommand{\EE}{\mathbf{E}}
\newcommand{\KK}{\mathbf{K}}
\newcommand{\HH}{\mathfrak{H}}
\newcommand{\e}{{\mathfrak{e}}}
\newcommand{\f}{{\mathfrak{f}}}

\newcommand{\oa}{\overline{a}}
\newcommand{\ob}{\overline{b}}

\newcommand{\ove}{\overline{e}}
\newcommand{\ox}{\overline{x}}
\newcommand{\oz}{\overline{z}}
\newcommand{\oy}{\overline{y}}
\newcommand{\iob}{\iota(\overline{b})}

%opening
\title{Boundary Moufang trees\\ with abelian root groups of characteristic $p$}
%\author{Pierre-Emmanuel Caprace and  Matthias Gr\"uninger}
\author[1]{Pierre-Emmanuel Caprace\thanks{F.R.S.-FNRS research associate, supported in part by the ERC (grant \#278469)}}
%\ead{pe.caprace@uclouvain.be}
\author[1]{Matthias Gr\"uninger\thanks{Supported by the ERC (grant \#278469)}}
%\ead{Matthias.Gruninger@uclouvain.be}

\affil[1]{UCLouvain, 1348 Louvain-la-Neuve, Belgium}

\date{June 20, 2014}

\begin{document}
\maketitle

\begin{abstract}
We prove that Moufang sets with abelian root groups arising at infinity of a locally finite tree all come from rank one simple algebraic groups over local fields.
\end{abstract}

\tableofcontents
\section{Introduction}

A \textbf{Moufang set} is a pair $(X, (U_x)_{x \in X})$ consisting of a set $X$ and a collection of permutation groups $U_x \leq \Sym(X)$, called \textbf{root groups},  such that for all $x \in X$, the group $U_x$ fixes $x$, acts regularly on $X \setminus \{x\}$, and preserves the set $\{U_y \; | \; y \in X\}$ under conjugation. A key example is the set $G/P$, where $G$ is the group of $k$-points of a simple algebraic group defined over a field $k$, and of $k$-rank one, and $P$ is the group of $k$-points of a minimal $k$-parabolic subgroup. 
Moufang sets have been introduced by J.~Tits \cite{Tits} and F.~Timmesfeld \cite{Tim}, in an attempt at axiomatizing the subgroup combinatorics of simple algebraic groups of rank one. An obvious class of Moufang sets is provided by sharply $2$-transitive permutation groups. A Moufang set which is not of that form  is called \textbf{proper}. It is known that every quadratic Jordan division algebra gives rise to a Moufang set with abelian root groups, see \cite[Theorem~4.2]{DW}. A major conjecture on Moufang sets predicts that, conversely, every proper Moufang set with abelian root groups arises in this way. For a survey on partial results in the direction, we refer to \S7.6 in \cite{DS1}; since then, a major breakthrough was accomplished by Y.~Segev in \cite{S2}. The goal of the present paper is to prove the conjecture in case the Moufang set is the set of ends of a locally finite tree $T$, and the root groups are closed subgroups of $\Aut(T)$. That case is relevant both to the theory of Moufang sets, and to the structure theory of locally compact groups.  

Let $T$ be a locally finite tree which is \textbf{thick}, i.e. all of whose vertices have degree~$\geq 3$. Let $(U_{\mathfrak{e}} \; | \; {\mathfrak{e}} \in \partial T)$ be a \textbf{system of root groups} for $T$, i.e. a collection of subgroups $U_{\mathfrak{e}} \leq \Aut(T)$ such that  for all ${\mathfrak{e}} \in \partial T$, the group $U_{\mathfrak{e}}$ fixes ${\mathfrak{e}}$, acts regularly on $\partial T \setminus \{{\mathfrak{e}}\}$,  and preserves $\{U_{\mathfrak{f}} \; | \; {\mathfrak{f}} \in \partial T\}$ under conjugation. 

\begin{theorem}\label{thm:main}
Assume that the root groups  are  abelian, and are closed subgroups of $\Aut(T)$ endowed with the compact-open topology. 

Then there exists a non-Archimedean local field $k$, a simply connected $k$-simple algebraic group  $\mathbf G$ defined over $k$, and a continuous central homomorphism $\varphi \colon \mathbf G(k) \to \Aut(T)$ such that $T$ is equivariantly isomorphic to the Bruhat--Tits tree of $\mathbf  G(k)$, and $\varphi(\mathbf G(k)) = G^\dagger$, where $G^\dagger =  \la U_{\mathfrak{e}} \; | \; {\mathfrak{e}} \in \partial T\ra$ is the little projective group of the given Moufang set. In particular $G^\dagger$ is a closed subgroup of $\Aut(T)$.  
\end{theorem}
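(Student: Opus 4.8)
The plan is to proceed in three stages: extract the geometry and topology of the root groups from the tree; recognize the Moufang set $(\partial T,(U_{\mathfrak e}))$ as one arising from a quadratic Jordan division algebra; and pin that algebra down over a non-Archimedean local field of characteristic $p$, so that $\mathbf G$ and the equivariant identification of $T$ with its Bruhat--Tits tree follow from the classification of rank-one groups over local fields. Concretely, I would first fix two distinct ends $\mathfrak e\neq\mathfrak f$, let $L$ be the line joining them, and set $H:=G^\dagger_{\mathfrak e,\mathfrak f}$ for the Hua subgroup. Since $U_{\mathfrak e}$ acts regularly on $\partial T\setminus\{\mathfrak e\}$, no non-trivial element of $U_{\mathfrak e}$ fixes a second end; hence $U_{\mathfrak e}$ contains no hyperbolic element, and each of its elements is elliptic or horocyclic at $\mathfrak e$. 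As $U_{\mathfrak e}$ is closed it is locally compact and totally disconnected, and fixing a ray $(v_0,v_1,\dots)$ toward $\mathfrak e$ the fixators in $U_{\mathfrak e}$ of the half-trees hanging off the $v_i$ away from $\mathfrak e$ form a filtration that is a neighbourhood basis of the identity and that encodes how $T$ looks near $\mathfrak e$. Dually, $H$ normalizes $U_{\mathfrak e}$, preserves $L$, and acts on $L$ through translations, with kernel a compact group $H_0$ fixing $L$ pointwise and $H/H_0$ a non-trivial discrete subgroup of $\RR$; conjugating the filtration of $U_{\mathfrak e}$ by a hyperbolic element of $H$ shifts it, producing the valuation-like datum around which everything else will be organized.

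Next, using that the root groups are abelian, I would identify the Moufang set. Either by the structure theory of Moufang sets with abelian root groups (surveyed in \cite{DS1}, advanced by Segev \cite{S2}), according to which such a Moufang set is special and every proper special Moufang set with abelian root groups is isomorphic to $\mathbb M(J)$ for a quadratic Jordan division algebra $J$ --- so that $U_{\mathfrak e}\cong(J,+)$, $H$ is built from the $U$-operators of $J$, and $G^\dagger$ is the associated little projective group, cf.\ \cite{DW} --- or else directly from the tree, one obtains such a coordinatizing $J$. Along the way one rules out the sharply $2$-transitive case: thickness of $T$ together with the non-trivial discrete translation group from the first stage forces the Moufang set to be proper. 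The problem is thereby reduced to identifying $J$.

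The topological input now constrains $J$. Local finiteness of $T$ makes the filtration quotients of $U_{\mathfrak e}$ finite, so the centroid $k$ of $J$ is complete for a discrete valuation with finite residue field, i.e.\ a non-Archimedean local field; since the root groups have exponent $p$, $\chara k = p$, hence $k\cong\FF_q((t))$, and $J$ is finite-dimensional over $k$. By the classical theory of finite-dimensional quadratic Jordan division algebras over a local field --- where no Albert division algebra occurs --- $J$ is built from $k$, from a central division algebra over $k$, from a hermitian form over such an algebra, or from an anisotropic quadratic form over $k$; in each case $\mathbb M(J)$ is exactly the Moufang set at infinity of the Bruhat--Tits tree of a simply connected $k$-simple group $\mathbf G$ of $k$-rank one (of type $A$, $^{2}A$, $B$, $C$, or $D$ --- $\SL_2$ of a division algebra, a unitary group, or a spin group), with $G^\dagger = \varphi(\mathbf G(k))$ for the canonical, necessarily continuous, central isogeny $\varphi$. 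Finally one matches trees: $\mathbf G(k)$ and $G^\dagger$ act on the Bruhat--Tits tree and on $T$ inducing the same $2$-transitive boundary action, and the filtration data of the first stage --- which reconstruct $T$ equivariantly from the permutation group $(G^\dagger,\partial T)$ --- coincide in both; hence $T$ is $G^\dagger$-equivariantly isomorphic to the Bruhat--Tits tree, and since $\mathbf G(k)$ acts properly on that tree, $G^\dagger$ is closed in $\Aut(T)$.

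The main obstacle is the third stage: deducing from ``locally finite'' and ``closed'' alone that the centroid is a local field and that $J$ is finite-dimensional --- that is, excluding infinite-dimensional or non-complete coordinatizing data --- and then checking that the tree rebuilt from $J$ really is $T$ and not some other tree with homeomorphic boundary. A subsidiary technical point is establishing properness of the Moufang set directly from thickness of $T$.
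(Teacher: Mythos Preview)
Your second stage assumes precisely the open conjecture that the theorem is meant to establish in this special case. Segev's theorem \cite{S2} shows that a proper Moufang set with abelian root groups is \emph{special}; it does \emph{not} show that every such Moufang set is $\mathbb M(J)$ for a quadratic Jordan division algebra $J$. That implication is the ``major conjecture'' stated in the introduction, and the whole point of the paper is to prove it under the additional hypotheses that the Moufang set sits at infinity of a locally finite tree with closed root groups. Your escape clause ``or else directly from the tree, one obtains such a coordinatizing $J$'' is where all the work lies, and you give no indication how to carry it out. Your third stage is fine once $J$ is in hand, but you never legitimately obtain $J$.

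The paper avoids this circularity by a completely different mechanism. In positive characteristic it does not attempt to produce $J$ directly. Instead it shows (via a long chain of identities in the Hua maps, Lemmas~\ref{Hua maps}--\ref{quadratic} and Propositions~\ref{Hua maps II} and~\ref{construction of root groups}) that one can extract a system of \emph{finite} root subgroups $U_n^{\pm}$ forming an RGD-system of type $\tilde A_1$ inside $G^\dagger$ (Theorem~\ref{RGD}). This RGD-system yields a Moufang \emph{twin} tree, in fact a commutative $\SL_2$-twin tree, and for those objects the second author has a classification (Theorem~\ref{ClassificationTheorem}) that produces the Jordan algebra and hence the algebraic group. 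The passage through twin trees and their classification is the substitute for the unavailable Jordan-algebra recognition theorem. A secondary point: you silently assume the root groups have exponent $p$, but by \cite[7.2.2]{DS1} they could also be torsion-free divisible; the paper disposes of that case separately by quoting \cite[Theorem~B]{CDM}, and you should too.
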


We emphasize that the isotropic simple algebraic groups over local fields have been classified by Kneser (in characteristic~$0$) and Bruhat--Tits (in positive characteristic), see \cite{Tits_corvallis}. Following \cite{Tits_boulder} and \cite{Tits_corvallis}, it turns out that the groups $\mathbf G(k)$ satisfying the conditions of Theorem~\ref{thm:main}, namely $\mathbf G$ is simply connected, $k$-simple, of  $k$-rank one and with abelian root groups over $k$, are precisely the members of   following two families:
\begin{itemize}
\item $\mathbf G(k) \cong \SL_2(D)$, where $D$ is a finite-dimensional central division algebra over $k$. In  that case, in the notation of \cite{Tits_corvallis},  the Tits-index of $\mathbf G(k)$ is $^1 A_{2d-1, 1}^{(d)}$, where $d \geq 1$ is the degree of $D$. Moreover, the tree $T$ is regular of degree $q+1$, where $q$ equals the order of the residue field of $k$.

\item $\mathbf G(k) \cong \SU_2(D, h)$, where $D$ is the quaternion central division algebra over $k$, and $h$ is an antihermitian sesquilinear form of   Witt index~$1$ relative to an involution $*$ of the first kind of $D$ such that the space   $\{x \in D \; | \; x^* = x\}$ has dimension~$3$. In  that case the Tits-index of $\mathbf G(k)$ is $C_{2, 1}^{(2)}$. Moreover, the tree $T$ is semi-regular of degree $(q+1, q^2+1)$, where $q$ equals the order of the residue field of $k$.
\end{itemize}

The proof of Theorem~\ref{thm:main} can be outlined as follows. 
It is known that if a Moufang set has abelian root groups, then the root group is either torsion-free and uniquely divisible, or  of exponent $p$ for some prime $p$, see \cite[Proposition~7.2.2]{DS1}. In other words, the root group carries the structure of a vector space over a prime field. In characteristic~$0$, i.e. when the root group is torsion-free, Theorem~\ref{thm:main} follows from \cite[Theorem~B]{CDM}. The latter relies on  deep results on $p$-adic analytic groups   due to Lazard--Lubotzky--Mann, that were used to prove that  the closure of the subgroup of $\Aut(T)$ generated by the root groups must actually be an algebraic group over $\QQ_p$. The present paper focuses on the case of positive characteristic. The strategy of proof is necessarily different, since no analogue of the aforementioned characterizations of $p$-adic analytic groups is known to hold in positive characteristic. The main idea behind our proof is the following. Given two distinct points ${\mathfrak{e}}, {\mathfrak{f}} \in \partial T$, we construct a countable collection of finite root subgroups $U^+_m \leq U_{\mathfrak{e}}$ and $U^-_n \leq U_{\mathfrak{f}}$, where $m, n \in \ZZ$, generating a dense subgroup $\Gamma$ of the locally compact group $G = \overline{ \la U_{\mathfrak{e}} \cup U_{\mathfrak{f}} \ra} \leq \Aut(T)$, and forming a so-called \textbf{RGD-system}, as defined by J.~Tits \cite{Tits}.  This means in particular that the tree $T$ admits a twin tree $T'$ on which $\Gamma$ acts by automorphisms, and such that the diagonal $\Gamma$-action on $T \times T'$ preserves the twinning. The presence of the finite root groups $U^+_m$ and $U^-_n$ ensures that the twinning satisfies the Moufang condition. At that point, we have obtained a Moufang twin tree, whose set of ends forms a Moufang set with abelian root groups. Those Moufang twin trees have been studied comprehensively by the second author in \cite{GrMoufangTwinTrees} (without any hypothesis of local finiteness). The conclusion of Theorem~\ref{thm:main} can then be deduced from the main result of loc. cit.

It should be pointed out that the group $\Gamma$ constructed along the proof happens to be a non-uniform lattice in the product group $G \times \Aut(T')$. The fact that our analysis of the class of locally compact groups $G$ under consideration has involved the construction of a lattice $\Gamma$ in $G  \times \Aut(T')$ is an interesting feature: the familiar approach consisting in reducing the study of lattices to the study of their ambient locally compact groups has been reversed here.

\medskip

We now describe applications of Theorem~\ref{thm:main}. The first provides a criterion allowing one to identify rank one algebraic groups with abelian root subgroups among locally compact groups with a boundary transitive action on a locally finite tree. 

Let $G \leq \Aut(T)$ be a closed non-compact subgroup acting transitively on $\partial T$. Besides rank one algebraic groups over non-Archimedean local fields, there are several other sources of groups $G \leq \Aut(T)$ as above: complete Kac--Moody groups of rank two over finite fields, the full automorphism group $G= \Aut(T)$ in case $T$ is regular or semi-regular, and many variations such as groups with prescribed local actions as defined any studied by Burger--Mozes \cite{BurgerMozes}. 

General results by Burger--Mozes, valid in all cases, imply that the group $G^{(\infty)}$, defined as the intersection of all non-trivial closed normal subgroups of $G$, is a compactly generated, topologically simple closed subgroup of $\Aut(T)$ acting doubly transitively on $\partial T$, and such that $G/G^{(\infty)}$ is compact (see \cite[Theorem~2.2]{CDM}). 

\begin{coro}\label{cor:con}
Let $T$ be a thick locally finite tree and $G \leq \Aut(T)$ be a closed non-compact group acting transitively on $\partial T$. 

 Then the following assertions are equivalent. 
\begin{enumerate}
\item  There exists a hyperbolic automorphism  $h \in G$ such that the \textbf{contraction group} $\con(h) = \{g \in G \; | \; \lim_n h^n g h^{-n}=1\}$ is abelian. 

\item There exists a non-Archimedean local field $ k$, a simply connected $k$-simple algebraic group  $\mathbf G$ defined over $k$ with abelian root groups over $k$, and a continuous central surjective homomorphism $\varphi \colon \mathbf G(k) \to G^{(\infty)}$. In particular $G$ is isomorphic to a closed subgroup of  $ \Aut(\mathbf G(k))$ containing $\Inn(\mathbf G(k)) \cong \mathbf G(k)/Z \cong G^{(\infty)}$. 
\end{enumerate}
\end{coro}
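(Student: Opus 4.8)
Recall the Burger--Mozes facts quoted above: $G^{(\infty)}$ is compactly generated, topologically simple and closed, acts $2$-transitively on $\partial T$, and $G/G^{(\infty)}$ is compact. A remark used in both directions: for every $h\in G$ one has $\con(h)\subseteq G^{(\infty)}$, and $\con(h)$ is the same whether computed in $G$ or in $G^{(\infty)}$ --- projecting $h^ngh^{-n}\to 1$ to the compact group $G/G^{(\infty)}$ and passing to a convergent subsequence of $(\bar h^n)$ gives $\bar g=1$, and $G^{(\infty)}$ is closed. For (ii)$\Rightarrow$(i), let $\mathbf P^-$ be a minimal $k$-parabolic fixing the end $\xi_-$ of the Bruhat--Tits tree, with abelian unipotent radical $\mathbf U^-$ and Levi decomposition $\mathbf P^-=\mathbf U^-\rtimes\mathbf L$, and let $\mathbf P^+$ be an opposite parabolic fixing $\xi_+$. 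Choosing $t$ in the maximal $k$-split torus of $\mathbf L$ so that it translates the apartment towards $\xi_+$, the element $h:=\varphi(t)$ is hyperbolic on $T$; since $\ker\varphi=Z(\mathbf G(k))$ meets $\mathbf U^-(k)$ trivially, and $h$ centralises $\varphi(\mathbf L(k))$ while conjugation by $h$ contracts $\varphi(\mathbf U^-(k))$, a short computation inside $\varphi(\mathbf P^-(k))=G^{(\infty)}_{\xi_-}$ gives $\con(h)=\varphi(\mathbf U^-(k))$, which is abelian. The last assertion of (ii) follows because $C_G(G^{(\infty)})$ is a closed normal subgroup of $G$ meeting $G^{(\infty)}$ trivially, hence trivial by the defining property of $G^{(\infty)}$, so that conjugation embeds $G$ as a closed subgroup of $\Aut(G^{(\infty)})=\Aut(\mathbf G(k))$ containing $\Inn(\mathbf G(k))\cong G^{(\infty)}$.

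For (i)$\Rightarrow$(ii), let $h\in G$ be hyperbolic with $\con(h)$ abelian and let $\xi_-$, $\xi_+$ be its repelling and attracting ends. A routine argument identifies $\con(h)$ with the group of $g\in G$ fixing pointwise some horoball centred at $\xi_-$; hence $\con(h)$ fixes $\xi_-$, is normalised by $G_{\xi_-}$ (which permutes the horoballs centred at $\xi_-$), and acts faithfully on $\partial T\setminus\{\xi_-\}$. The heart of the matter --- and the step I expect to be the main obstacle --- is to prove that \emph{$\con(h)$ acts transitively on $\partial T\setminus\{\xi_-\}$}, equivalently that $G_{\xi_-}=\con(h)\cdot G_{\xi_-,\xi_+}$. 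Here is the plan. Since $\con(h)\trianglelefteq G_{\xi_-}$ and $G_{\xi_-}$ is transitive on $\partial T\setminus\{\xi_-\}$ (by $2$-transitivity of $G$), the group $G_{\xi_-}$ permutes the $\con(h)$-orbits transitively, so these are pairwise homeomorphic; suppose one knows they are \emph{open}. Then each is clopen, and the orbit $O_0$ containing $\xi_+$ is $h$-invariant (as $h$ fixes $\xi_+$ and normalises $\con(h)$); but $\xi_+$ is a global attractor for the $h$-action on $\partial T\setminus\{\xi_-\}$, so no other $h$-orbit can stay in the closed set $\partial T\setminus(\{\xi_-\}\cup O_0)$ while converging to $\xi_+$ --- forcing $O_0=\partial T\setminus\{\xi_-\}$. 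It remains to establish the openness of $\con(h)\cdot\xi_+$, i.e.\ the transitivity of $\con(h)$ on a neighbourhood of $\xi_+$. Writing $\con(h)=\bigcup_c \mathrm{Fix}_G(B_c)$ over the horoballs $B_c=\{\beta_{\xi_-}\le c\}$, one has $\mathrm{Fix}_G(B_c)\cdot\xi_+\subseteq$ (shadow of the level-$c$ vertex $v_c$ of the axis), and $2$-transitivity readily gives that the pointwise stabiliser in $G$ of the ray $(\xi_-,v_c]$ \emph{is} transitive on that shadow; the delicate point --- where the local structure of $G$, rather than $2$-transitivity alone, must be exploited, possibly via Willis' theory of tidy subgroups --- is to descend this transitivity from the ray-stabiliser to the horoball-stabiliser $\mathrm{Fix}_G(B_c)\subseteq\con(h)$.

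Granting the key step, $\con(h)$ is abelian, faithful and transitive on $\partial T\setminus\{\xi_-\}$, hence acts \emph{regularly} there; comparing its orbit map with that of $\overline{\con(h)}$ forces $\con(h)=\overline{\con(h)}$, so $\con(h)$ is a closed subgroup of $\Aut(T)$. Put $U_{\xi_-}:=\con(h)$, and for ${\mathfrak{e}}\in\partial T$ set $U_{\mathfrak{e}}:=gU_{\xi_-}g^{-1}$ for any $g\in G^{(\infty)}$ with $g\xi_-={\mathfrak{e}}$; this is well defined since $G^{(\infty)}_{\xi_-}$ normalises $U_{\xi_-}$, and conjugation by $G^{(\infty)}$ permutes the family $(U_{\mathfrak{e}})$. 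Thus $(U_{\mathfrak{e}})_{{\mathfrak{e}}\in\partial T}$ is a system of root groups for $T$ with abelian, closed members, and Theorem~\ref{thm:main} provides a non-Archimedean local field $k$, a simply connected $k$-simple $\mathbf G$, and a continuous central homomorphism $\varphi\colon\mathbf G(k)\to\Aut(T)$ such that $T$ is equivariantly isomorphic to the Bruhat--Tits tree of $\mathbf G(k)$ and $\varphi(\mathbf G(k))=G^\dagger:=\la U_{\mathfrak{e}}\mid{\mathfrak{e}}\in\partial T\ra$. Since $G^\dagger$ is generated by $G^{(\infty)}$-conjugates of $U_{\xi_-}\le G^{(\infty)}$, it is a non-trivial normal subgroup of $G^{(\infty)}$, and being closed it equals $G^{(\infty)}$ by topological simplicity. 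Hence $\varphi$ is a continuous central surjection $\mathbf G(k)\to G^{(\infty)}$; moreover $\mathbf G$ has abelian root groups over $k$ --- these are carried by $\varphi$ onto the abelian groups $U_{\mathfrak{e}}$, or one invokes the classification recalled after Theorem~\ref{thm:main} --- and the embedding of $G$ into $\Aut(\mathbf G(k))$ is obtained exactly as in the converse direction. This yields (ii).
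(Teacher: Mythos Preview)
Your overall architecture matches the paper's: show that the $G$-conjugates of $\con(h)$ form a system of closed abelian root groups for $T$, apply Theorem~\ref{thm:main}, and then identify $G^\dagger$ with $G^{(\infty)}$. The difference is that the paper does not attempt any of the direct arguments you sketch for the two hard points. It simply quotes \cite{CDM}: Corollary~2.6 there gives that $\con(h)$ is \emph{closed} under the standing hypotheses on $G$, and Theorem~C then yields that $\partial T$ is boundary Moufang with the conjugates of $\con(h)$ as root groups (in particular $\con(h)$ acts regularly on $\partial T\setminus\{\xi_-\}$). The identification $G^\dagger=G^{(\infty)}$ is likewise obtained by citing Proposition~3.5 of \cite{CDM}, not from abstract topological simplicity. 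So the paper's proof of (i)$\Rightarrow$(ii) is four lines of citations plus an appeal to Theorem~\ref{thm:main}.

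The step you yourself flag as ``the main obstacle'' --- descending shadow-transitivity from the ray-stabiliser to the horoball-stabiliser $\mathrm{Fix}_G(B_c)$ --- is a genuine gap, and nothing in your sketch closes it: $2$-transitivity on $\partial T$ gives no direct control over which elements fix an entire horoball pointwise. This is precisely the content of the \cite{CDM} results, whose proofs use the structure theory of contraction groups in totally disconnected locally compact groups rather than the attractor argument you propose. Note also that the logical order is reversed relative to your plan: in \cite{CDM} one first proves $\con(h)$ is closed (without knowing transitivity), and only then deduces regularity on $\partial T\setminus\{\xi_-\}$; your route (transitivity $\Rightarrow$ regularity $\Rightarrow$ closedness via comparison with $\overline{\con(h)}$) is elegant but presupposes exactly the step you cannot supply. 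A minor related point: your ``routine'' identification of $\con(h)$ with the union of horoball-fixators already uses closedness of $\con(h)$ implicitly --- in general the contraction group in $\Aut(T)$ can be strictly larger than that union.
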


We may combine Corollary~\ref{cor:con} with general results on the structure of locally compact groups from \cite{CCMT} to derive the following (compare Corollary~D from \cite{CDM}). 

\begin{coro}\label{cor:unimod}
Let $G \neq \{1\}$ be a unimodular locally compact group whose only compact normal subgroup is the trivial one. Assume that for some $h \in G$, the contraction group $\con(h)$ is abelian,  and that the closed subgroup $\overline{\la h \con(h) \ra}$ is cocompact in $G$. 

Then there exists a (possibly Archimedean)  local field $k$, a simply connected $k$-simple algebraic group  $\mathbf G$ defined over $k$ and a continuous central homomorphism $\varphi \colon \mathbf G(k) \to G$ such that $\varphi(\mathbf G(k))$ is the smallest closed normal subgroup of $G$. In particular $G$ is isomorphic to a closed subgroup of  $\Aut(\mathbf G(k))$ containing $\Inn(\mathbf G(k))$.
\end{coro}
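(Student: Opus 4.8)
The strategy is to reduce to Corollary~\ref{cor:con} by producing a locally finite tree on which (a canonical quotient of) $G$ acts with the required boundary transitivity, once we have dealt with the possibility that $k$ is Archimedean directly. First I would invoke the general structure theory of \cite{CCMT}: under the hypothesis that $G$ is unimodular with no non-trivial compact normal subgroup and admits a hyperbolic-type element $h$ whose contraction group generates together with $h$ a cocompact subgroup, the group $G$ is non-compact and, since $\con(h) \neq \{1\}$ (otherwise $\overline{\la h \ra}$ would be cocompact and $G$ compact-by-cyclic, forcing a non-trivial compact normal subgroup or triviality), one knows that $G$ is a non-discrete, compactly generated, totally disconnected locally compact group \emph{or} has a non-trivial connected component. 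I would first treat the connected case: the identity component $G^\circ$ is then a non-trivial connected closed normal subgroup, and unimodularity together with the absence of compact normal subgroups pushes $G$ towards being a connected simple Lie group of real rank one (the only connected groups admitting a contracting automorphism with the stated cocompactness are, after modding out, the rank one simple Lie groups), which gives the Archimedean conclusion with $\varphi$ the obvious homomorphism.

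In the totally disconnected case, the plan is to build the tree. By \cite{CCMT} (or by the Baumgartner--Willis theory of contraction groups for t.d.l.c.\ groups, which \cite{CCMT} packages), the element $h$ together with $\con(h)$ and $\con(h^{-1})$ generates, up to cocompactness, a group with a very rigid dynamical structure on the space of ends of an associated structure; concretely, the cocompactness of $\overline{\la h \con(h)\ra}$ forces $G$ to act on a locally finite tree $T$ — this is the technical heart and I expect it to be the main obstacle. The mechanism: the ``attracting'' and ``repelling'' fixed points of $h$ on an appropriate compactification, combined with the fact that $\con(h)$ acts regularly on the complement of the repelling point, reproduce exactly the axioms of a Moufang set, and the local finiteness comes from the compact generation plus the existence of a compact open subgroup tidy for $h$. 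One then realizes $G$, or rather $G^{(\infty)}$, as a closed subgroup of $\Aut(T)$ acting transitively (indeed doubly transitively) on $\partial T$, with $h$ a hyperbolic automorphism and $\con(h)$ still abelian.

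Once the tree is in place, Corollary~\ref{cor:con} applies verbatim: the abelian contraction group hypothesis is precisely condition (i) there, so we obtain a non-Archimedean local field $k$, a simply connected $k$-simple algebraic group $\mathbf G$ with abelian root groups over $k$, and a continuous central surjection $\mathbf G(k) \to G^{(\infty)}$. It remains to check that $G^{(\infty)} = \varphi(\mathbf G(k))$ is the smallest closed \emph{normal} subgroup of $G$, not merely of its action on $T$; this follows because any non-trivial closed normal subgroup of $G$ acts non-trivially on $T$ (a non-trivial closed normal subgroup acting trivially on $T$ would be a kernel, hence — being normal and closed in a group with no compact normal subgroups and by the simplicity of $G^{(\infty)}$ — forced to be trivial or to contain $G^{(\infty)}$), and $G^{(\infty)}$ is topologically simple, so it is contained in every non-trivial closed normal subgroup. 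The last sentence, that $G$ embeds in $\Aut(\mathbf G(k))$ containing $\Inn(\mathbf G(k))$, then follows formally: $G$ acts on its smallest closed normal subgroup $\varphi(\mathbf G(k)) \cong \mathbf G(k)/Z$ by conjugation, the kernel of this action is a closed normal subgroup centralizing $G^{(\infty)}$ hence trivial, and $\Aut(\mathbf G(k)/Z) \cong \Aut(\mathbf G(k))$ for $\mathbf G$ simply connected $k$-simple.
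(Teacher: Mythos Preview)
Your approach is essentially the one the paper takes: invoke the structure theorem from \cite{CCMT} to obtain a dichotomy (almost connected rank one simple Lie group versus a proper faithful boundary-doubly-transitive action on a thick locally finite tree), dispose of the Lie case directly, and in the tree case apply Corollary~\ref{cor:con}. The paper's proof is in fact shorter than your plan, because Theorem~8.1 of \cite{CCMT} hands you the tree and the doubly transitive boundary action as a black box; you flag the tree construction as ``the technical heart and the main obstacle'', but there is nothing to build by hand here, and your proposed case split into connected versus totally disconnected is subsumed by that single citation. Once the tree is in place, the only remaining observation is that $h$ must be hyperbolic (else $\con(h)$ is trivial and $G$ is forced to be compact, hence trivial), after which Corollary~\ref{cor:con} finishes the argument; your additional paragraph verifying that $G^{(\infty)}$ is the smallest closed normal subgroup is correct but already implicit in the conclusion of Corollary~\ref{cor:con} together with the Burger--Mozes structural facts recalled in the introduction.
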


Another application concerns sharply $3$-transitive permutation groups. 
J.~Tits \cite{Tits3trans} proved that if a Lie group $G$ has a continuous sharply $3$-transitive action on a topological space $X$, then $G= \PGL_2(\RR)$ or $ \PGL_2(\CC)$ and $X$ is the projective line $\mathbb P^1(\RR)$ or $\mathbb P^1(\CC)$. One expects more generally that the only source of sharply $3$-transitive actions of a $\sigma$-compact locally compact group $G$ on a  compact space $X$ is given by the action of a group $G$ with $\PSL_2(k) \leq G \leq \PGaL_2(k)$ over $\mathbb{P}^1(k)$, where $k$ is a local field. Partial progress towards this conjecture has recently been accomplished by Carette--Dreesen \cite{CarDre}, showing that if $G$ is not a Lie group, then $G$ has a continuous proper action by automorphisms on a regular locally finite tree $T$, such that $X$ is equivariantly homeomorphic to the set of ends $\partial T$. We are then very close to a situation where Theorem~~\ref{thm:main} applies; we indeed obtain the following. 

\begin{coro}\label{cor:sharply3transitive}
Let $G$ be a $\sigma$-compact locally compact group and $\Omega$ be a compact $G$-space, such that the $G$-action on $\Omega$ is sharply $3$-transitive. Then the following assertions are equivalent. 
\begin{enumerate}
\item For $\omega \in \Omega$, there exists a normal subgroup $U_\omega \leq G_\omega$ 
 acting regularly on $\Omega \setminus \{\omega\}$. 

\item There exists a (possibly Archimedean)  local field $k$  and a group $
\PSL_2(k) \leq H \leq \PGaL_2(k)$ such that $(G, \Omega)$ and $(H, \mathbb{P}^1(k))$ are 
isomorphic topological transformation groups. 

\end{enumerate}
\end{coro}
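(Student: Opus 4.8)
The plan is to deduce Corollary~\ref{cor:sharply3transitive} by combining the result of Carette--Dreesen \cite{CarDre} with Theorem~\ref{thm:main} (in the non-Lie case) and with the classical Tits theorem \cite{Tits3trans} (in the Lie case), while checking that the extra hypothesis in~(i) is exactly what is needed to produce a Moufang set structure on $\Omega$. The implication (ii)$\Rightarrow$(i) is the easy direction: for $H$ with $\PSL_2(k) \leq H \leq \PGaL_2(k)$ acting on $\mathbb{P}^1(k)$, one takes $U_\omega$ to be the unipotent radical of the stabilizer of $\omega$ in $\PSL_2(k)$ (a copy of the additive group of $k$), which is normal in $H_\omega$ since $H$ normalizes $\PSL_2(k)$ and $H_\omega$ normalizes $\PSL_2(k)_\omega$ whose unipotent radical is characteristic; it acts regularly on $\mathbb{P}^1(k) \setminus \{\omega\}$. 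Continuity and the identification of topological transformation groups are standard for algebraic groups over local fields.

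For the hard direction (i)$\Rightarrow$(ii), first I would observe that sharp $3$-transitivity of $G$ on $\Omega$ forces $G_\omega$ to act sharply $2$-transitively on $\Omega \setminus \{\omega\}$, and combined with the normal subgroup $U_\omega$ acting regularly on $\Omega \setminus \{\omega\}$, the family $(U_\omega)_{\omega \in \Omega}$ is a system of root groups: indeed the condition that $\{U_\omega\}$ is permuted under conjugation follows because for $g \in G$ the subgroup $g U_\omega g^{-1} \leq G_{g\omega}$ is normal and acts regularly on $\Omega \setminus \{g\omega\}$, and within a sharply $2$-transitive group such a regular normal subgroup is unique (it is the unique regular normal subgroup, when it exists). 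So $(\Omega, (U_\omega))$ is a Moufang set, with little projective group $G^\dagger = \la U_\omega \mid \omega \in \Omega\ra$. Next, I would split on whether $G$ is a Lie group. If it is, Tits \cite{Tits3trans} gives $G = \PGL_2(\RR)$ or $\PGL_2(\CC)$ with $\Omega = \mathbb{P}^1(\RR)$ or $\mathbb{P}^1(\CC)$, which is the desired conclusion with $k$ Archimedean. If $G$ is not a Lie group, Carette--Dreesen \cite{CarDre} provide a continuous proper action of $G$ on a regular locally finite tree $T$ with $\Omega$ equivariantly homeomorphic to $\partial T$; transporting the root groups $U_\omega$ along this homeomorphism and noting that they are closed in $\Aut(T)$ (a regular permutation group acting on a compact set, realised inside the compact-open topology, is closed — alternatively $U_\omega$ is the contraction group of any hyperbolic element of $G^\dagger$ fixing $\omega$ and hence closed), we obtain a system of root groups for $T$ in the sense of the paper. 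To apply Theorem~\ref{thm:main} we still need the root groups to be abelian; here I would invoke the dichotomy cited in the outline (torsion-free uniquely divisible, or exponent $p$) together with the fact that $U_\omega$ is a closed subgroup of $\Aut(T)$ acting on a locally finite tree, hence is profinite-by-(finitely generated)? — more carefully, one should argue that in the tree case $U_\omega$ is locally elliptic and closed, so it is the directed union of its finite subgroups and abelianness of such root groups in the boundary-Moufang setting follows, or simply note that this abelianness hypothesis is implicit in hypothesis (i) combined with \cite[Proposition~7.2.2]{DS1} once one knows the root group structure; I will need to double-check which of these routes the paper actually takes. Then Theorem~\ref{thm:main} yields a non-Archimedean local field $k$ and $\mathbf G$ simply connected $k$-simple of $k$-rank one with abelian root groups, with $G^\dagger \cong \mathbf G(k)/Z$ and $T$ the Bruhat--Tits tree.

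It remains to pin down $\mathbf G$ and the position of $G$ between $\PSL_2(k)$ and $\PGaL_2(k)$. The two families listed after Theorem~\ref{thm:main} are $\SL_2(D)$ and $\SU_2(D,h)$; since here $G$ acts \emph{sharply} $3$-transitively on $\Omega = \partial T$, the point stabilizer acts sharply $2$-transitively on $\partial T \setminus \{\omega\}$, which forces the root group $U_\omega$ to act regularly \emph{and} the complement (a "Hua subgroup" or torus) to act regularly on $\partial T \setminus \{\omega, \omega'\}$; this rigidity singles out the split case $\mathbf G = \SL_2$ over $k$, i.e. $D = k$, so $G^{(\infty)} = G^\dagger \cong \PSL_2(k)$. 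Finally $G$ normalizes $G^{(\infty)}$ and acts faithfully on $\Omega = \mathbb{P}^1(k)$, so $G$ embeds in the automorphism group of the permutation group $(\PSL_2(k), \mathbb{P}^1(k))$, which is $\PGaL_2(k)$; continuity of the $G$-action transfers to a continuous embedding, giving $\PSL_2(k) \leq H \leq \PGaL_2(k)$ with the required isomorphism of topological transformation groups. The main obstacle I anticipate is the verification that the sharp $3$-transitivity (as opposed to mere $2$-transitivity at point stabilizers) rules out the division-algebra and quaternionic-unitary families and forces $D = k$ — that is, showing the Hua/torus part of the stabilizer is forced to be the split torus $k^\times$ acting simply transitively on $\partial T \setminus \{\omega, \omega'\}$; a clean way to see this is that in $\SL_2(D)$ or $\SU_2(D,h)$ the relevant torus is $D^\times$ or a norm-one group, which acts simply transitively on pairs only when $D$ is commutative, i.e. $D = k$ and we are in type $^1A_{1,1}^{(1)} = \PSL_2$.
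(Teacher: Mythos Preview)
Your overall architecture matches the paper's: invoke Carette--Dreesen to split into the Lie case (handled by Tits) and the tree case, then in the tree case exhibit $(\partial T, (U_\omega))$ as a boundary Moufang structure and apply Theorem~\ref{thm:main}, and finally use sharp $3$-transitivity to exclude all families in the post-theorem list except $\SL_2(k)$. The easy direction and the endgame identification with a subgroup of $\PGaL_2(k)$ are fine.

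The genuine gap is abelianness of $U_\omega$, which is a \emph{hypothesis} of Theorem~\ref{thm:main} and must be established before you can invoke it. Neither of your proposed routes works: \cite[Proposition~7.2.2]{DS1} presupposes abelian root groups (it describes their structure once abelianness is known), and the observation that $U_\omega$ is locally elliptic and profinite says nothing about commutativity. The paper's argument is this: since $G_\omega$ is sharply $2$-transitive on $\Omega\setminus\{\omega\}$ and $U_\omega$ is regular, any two non-trivial elements of $U_\omega$ are conjugate in $G_\omega$; for any involution $\tau\in G_\omega$ and any $u\in U_\omega$ the commutator $[\tau,u]$ lies in $U_\omega$ and is a product of two involutions, so \emph{every} non-trivial element of $U_\omega$ is a product of two involutions, which forces $U_\omega$ to be abelian (see \cite[Theorem~3.7]{Kerby}).

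Once abelianness is in hand, closedness follows cleanly by the paper's route: the closure $\overline{U_\omega}$ is still abelian and transitive on $\Omega\setminus\{\omega\}$, hence regular (transitive abelian actions are regular), so $\overline{U_\omega}=U_\omega$. Your direct closedness arguments are shakier: a regular group on a compact set need not have regular closure without abelianness, and identifying $U_\omega$ with a contraction group presupposes exactly the boundary-Moufang/closedness setup you are trying to establish. Your argument for ruling out $\SL_2(D)$ with $D\neq k$ and $\SU_2(D,h)$ via the torus acting non-freely on triples is essentially the paper's ``direct computations show that the pointwise stabiliser of a triple is trivial only for $\SL_2(k)$''.
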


In certain cases, the condition (i) in Corollary~\ref{cor:sharply3transitive} is known to be automatically satisfied. Indeed, given any (abstract) sharply $3$-transitive group $G$, there is a way to define a \textbf{permutational characteristic} of $G$, which is either $0$ or a prime number $p$. It is known  that if the characteristic of $G$ is either $3$ or congruent to $1$ modulo $3$, then the condition (i) in Corollary~\ref{cor:sharply3transitive} automatically holds (see \cite[\S13]{Kerby} and Corollary~\ref{cor:1modulo3} below).

\section{Preliminaries}

\subsection{Generalities on Moufang sets}

We briefly recall some general conventions and notation from the basic theory of Moufang sets; an excellent exposition of the material can be found in \cite{DS1}. 

Given a group $H$, the set of non-trivial elements of $H$ is denoted by $H^\#$. 

Let $(X, (U_x)_{x \in X})$ and pick any point $\infty \in X$.  The group $G^\dagger = \la U_x \; | \; x \in X \ra$ is called the \textbf{little projective group} of the Moufang set. It is doubly transitive on $X$. 
Given two elements $0, \infty \in X$ and any element $\tau \in G^\dagger$ swapping $0$ and $\infty$, we may identify  the set $X \setminus \{\infty\}$ with the root group $U_\infty$ by sending $x \in X \setminus \{\infty\}$ to the unique element $\alpha_x \in U_\infty$ such that $0 \alpha_x = x$. Under this identification, the permutation $\tau$ may be viewed as a permutation of the set $(U_\infty)^\#$. It turns out that the whole Moufang set $(X, (U_x)_{x \in X})$ can be recovered from the pair $(U_\infty, \tau)$. This Moufang set is denoted by $\mathbb  M(U_\infty, \tau)$. 

For example, if $X$ is the projective line over a field $\FF$ and $U_x \leq G = \PGL_2(\FF)$ is the unipotent radical of the stabiliser $G_x$, then $(X, (U_x)_{x \in X})$ is a Moufang set. It can be identified with the Moufang set $\mathbb  M(U, \tau)$, where $U$ is the additive group of $\FF$ and $\tau \colon \FF^\# \to \FF^\# : x \mapsto -x^{-1}$. This Moufang set will be denoted by $\mathbb  M(\FF)$. 

We now come back to a general Moufang set $\mathbb  \mathbb  M(U_\infty, \tau)$. Set $U = U_\infty$. It is customary to denote the group $U$ additively, even though it is not necessarily commutative. For any $a \in U^\#$, there is a unique element $\mu_a$ of the double coset $(U^\tau) a (U^\tau)$ which swaps $0$ and $\infty$. The element $h_a = \tau \mu_a \in G^\dagger$ is called the \textbf{Hua map} associated with $a$. The subgroup  $\HH$ of $G^\dagger$ generated by the Hua maps is called the \textbf{Hua group}. The Moufang set is \textbf{proper} if  $\HH$ is non-trivial; this means equivalently that $G^\dagger$ is not sharply $2$-transitive on $X$. 

The Moufang set $\mathbb  \mathbb  M(U, \tau)$ is called \textbf{special} if $-a \tau = (-a)\tau$ for all $a \in U^\#$. A fundamental result due to Y. Segev \cite{S2} ensures that if $U$ is abelian and $\mathbb  \mathbb  M(U, \tau)$ is proper, then it is special. 

The following basic identities will be needed. 
\begin{lemma}\label{mumaps}
Let $\mathbb M(U,\tau)$ be a Moufang set and $a \in U^{\#}$. Then we have
\begin{enumerate}
\item $\mu_a^{-1} =\mu_{-a}$.
\item $\mu_{ah} = \mu_a^h$ for all $h \in \HH$.
\item If $\mathbb M(U,\tau) = \mathbb M(U,\tau^{-1})$, then $\mu_{a\tau} = \mu_{-a}^{\tau}$.
In particular $\mu_{a\mu_b} =\mu_{-a}^{\mu_b}$ for all $b \in U^\#$.
\item If $\mathbb M(U,\tau)$ is special, then $a\mu_a = -a$ and $(-a)\mu_a =a$.
\end{enumerate}
\end{lemma}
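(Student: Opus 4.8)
These are standard manipulations in the "Moufang set calculus" built on the $\mu$-maps, and the plan is to reduce every assertion to the defining property of $\mu_a$ — namely that $\mu_a$ is the unique element of the double coset $U^\tau a\, U^\tau$ swapping $0$ and $\infty$ — together with the basic fact that $\mu_a$ conjugates $U = U_\infty$ to $U_0$ and vice versa, and the action formulas $0\mu_a=\infty$, $\infty\mu_a=0$. Throughout I write the action of $G^\dagger$ on $X$ on the right, consistently with the excerpt's notation $x\alpha_x$, $a\tau$, etc.

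**Step (i):** To prove $\mu_a^{-1} = \mu_{-a}$, I would first check that $\mu_a^{-1}$ swaps $0$ and $\infty$, which is immediate since $\mu_a$ does. Then I need to show $\mu_a^{-1}$ lies in the double coset $U^\tau\,(-a)\,U^\tau$. Writing $\mu_a = u_1^\tau\, a\, u_2^\tau$ with $u_1,u_2\in U$ (abusing notation: elements of $U^\tau$), inversion gives $\mu_a^{-1}$ in the coset $U^\tau\, a^{-1}\, U^\tau$ up to the fact that $U^\tau$ is a subgroup; the point is to identify the "middle" element. The cleanest route is to use that for any $b\in U^\#$ there is a unique $c\in U^\#$ with $b\mu_a^{-1} \cdot$ (something) — rather, I would use the known recursion describing the action of $U^\tau$-cosets, or simply appeal to the characterization: it suffices to show that the unique element of $U^\tau(-a)U^\tau$ swapping $0,\infty$ equals $\mu_a^{-1}$, and one verifies $\mu_a$ sends the element "$a$" (i.e. $0\alpha_a=$ the point corresponding to $a$) appropriately. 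I expect this to follow from the identity $(-a)\alpha = \alpha^{-1}$-type bookkeeping in $U$ once the conventions are fixed; this is Lemma 3.1 in de Medts–Segev or equivalent, and I would cite or reprove it in two lines.

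**Steps (ii)–(iii):** For (ii), $\mu_{ah}=\mu_a^h$ for $h\in\HH$: since $h$ is a Hua map product, $h$ fixes both $0$ and $\infty$, hence normalizes both $U=U_\infty$ and $U_0$, and in particular normalizes $U^\tau = U_0^{\tau}$? — more precisely $h$ fixes $\infty$ so conjugation by $h$ preserves $U$, and since $h$ also fixes $0$ it preserves the "opposite" root group, so $\mu_a^h$ swaps $0,\infty$ and lies in $U^\tau (ah) U^\tau$ (using $a^h = ah$ under the identification of $X\setminus\{\infty\}$ with $U$ via the action of $h$ on the point labelled $a$). Uniqueness then forces $\mu_a^h=\mu_{ah}$. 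For (iii), under the hypothesis $\mathbb M(U,\tau)=\mathbb M(U,\tau^{-1})$ one has $\tau^{-1}\in G^\dagger$ playing the role of $\tau$, so $\mu_{a\tau}$ is characterized inside $U^{\tau^{-1}}(a\tau)U^{\tau^{-1}} = (U^\tau)^{\tau^{-1}}\cdots$; conjugating the defining equation for $\mu_{-a}$ by $\tau$ and using (i) to replace $\mu_{-a}$-issues, I would match it against the defining double coset for $\mu_{a\tau}$. The "in particular" clause is the case $\tau\leftrightarrow\mu_b$: since $\mathbb M(U,\tau)=\mathbb M(U,\mu_b)$ (both $\tau$ and $\mu_b$ swap $0,\infty$ and generate the same Moufang set structure on $U$), apply the first part with $\tau$ replaced by $\mu_b$, using that $\mathbb M(U,\mu_b)=\mathbb M(U,\mu_b^{-1})$ by (i).

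**Step (iv):** If $\mathbb M(U,\tau)$ is special, then $a\mu_a=-a$ and $(-a)\mu_a=a$. I would argue: $\mu_a = \tau \cdot (\text{translation by } a)$-type decomposition is not available directly, but recall $\mu_a$ is the unique $U^\tau$-double-coset representative of $a$ swapping $0$ and $\infty$; there is a standard closed form, valid in general, $x\mu_a = (-a) + (a - x)\tau^{-1}\cdot(\text{stuff})$ — rather, I will use the well-known identity (e.g. from de Medts–Segev) that in a special Moufang set $\mu_a$ acts by $x\mapsto (-(x^{-1})\tau$-conjugates$)$; concretely the special hypothesis $-(a\tau)=(-a)\tau$ makes the formula $a\mu_a=-a$ fall out by plugging $x=a$ into the action formula for $\mu_a$ and simplifying using speciality, and then $(-a)\mu_a=a$ follows by applying $\mu_a$ again (since $\mu_a$ swaps $0,\infty$ and has order dividing... no) — more safely, from (i) $\mu_a^{-1}=\mu_{-a}$, so $(-a)\mu_a = a$ is equivalent to $a\mu_{-a} = -a$, which is the first identity with $a$ replaced by $-a$; thus the two statements are equivalent and I only need one.

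**Main obstacle.** The genuine content is (iv) and the identification of middle terms of double cosets in (i) and (iii): everything hinges on having the explicit action formula for $\mu_a$ on $U$ and the bookkeeping of the identification $X\setminus\{\infty\}\cong U$ correct. I expect the special-case computation in (iv) — extracting $a\mu_a=-a$ from speciality — to be the step requiring the most care, though it is short once the right formula for $\mu_a$ is in hand; all of this is essentially contained in de Medts–Segev's foundational paper, and in the writeup I would either cite it precisely or include the two or three line verifications, keeping the conventions (right actions, additive notation for possibly non-abelian $U$) rigidly consistent.
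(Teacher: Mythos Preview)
The paper's own proof is a one-line citation: ``See 4.3.1 and 7.1.4 of \cite{DS1}.'' Your proposal ultimately lands in the same place---you repeatedly acknowledge that you would ``cite or reprove it in two lines'' from De Medts--Segev, which is precisely what the paper does---so the approaches agree.

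That said, the intermediate sketches you offer are uneven. Part (ii) is essentially right. In part (i) you correctly note that $\mu_a^{-1}$ swaps $0$ and $\infty$, but your double-coset bookkeeping trails off (``the point is to identify the `middle' element'') without actually identifying it; the clean argument uses the explicit form $\mu_a = \alpha_{(-a)\tau^{-1}}^\tau \alpha_a \alpha_{-(a\tau^{-1})\tau}^\tau$ (or its equivalent), from which inversion directly yields the $\mu_{-a}$ expression. In part (iv) you never write down the action formula for $\mu_a$ on $U$ that you say ``everything hinges on,'' and the aborted attempts (``$\mu_a = \tau\cdot(\text{translation by }a)$-type decomposition is not available directly'') suggest you have not pinned it down; the relevant identity in the special case is that $a\mu_a$ can be computed from $\mu_a = \alpha_{a\tau}^\tau \alpha_a \alpha_{a\tau}^\tau$ (using speciality to simplify the outer factors), giving $a\mu_a = -a$ immediately. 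If you intend to write out the proofs rather than cite, those are the two places where your outline would need to be made concrete.
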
 
\begin{proof}
See 4.3.1 and 7.1.4 of \cite{DS1}.
\end{proof}
\begin{lemma}\label{basic formulas}
Let $\mathbb  M(U,\tau)$ be a Moufang set and $a,b \in U^{\#}$ with $a \ne b$. Then the following assertions hold.
\begin{enumerate}
\item $(a\tau^{-1} -b\tau^{-1})\tau = (a-b)\mu_b + (-b\tau^{-1})\tau$.
\item $\mu_{-b}\mu_{b-a} \mu_a = \mu_{(a\tau^{-1} -b\tau^{-1})\tau}$.
\end{enumerate}
\end{lemma}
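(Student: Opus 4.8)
The plan is to prove both identities by translating statements about $\mu$-maps and $\tau$ into statements about the action on the set $X = U \cup \{\infty\}$, using the dictionary that an element of $G^\dagger$ is determined by where it sends $0$, $\infty$, and one further point (double transitivity plus regularity of root groups). For part (i), I would apply Lemma~\ref{basic formulas}(i)-style bookkeeping directly: fix $a, b \in U^\#$ with $a \neq b$, and observe that the element $\mu_b$ sends $\infty \mapsto 0 \mapsto \infty$ and, being in $(U^\tau)b(U^\tau)$, acts on the affine line $U$ in a way controlled by $b$. The natural route is to write $\tau$ in terms of $\mu_b$: since $h_b = \tau\mu_b$ lies in the Hua group and fixes $0$ and $\infty$, we have $\tau = h_b \mu_b^{-1} = h_b \mu_{-b}$ by Lemma~\ref{mumaps}(i). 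Then evaluate both sides of (i) on a generic point, reducing everything to the known action of the Hua map $h_b$ (which is a ``multiplication'' fixing $0, \infty$) and of $\mu_b$. The point of the asserted formula is precisely that $(a\tau^{-1} - b\tau^{-1})\tau$ is an affine function of the input, with ``linear part'' given by $\mu_b$ and a constant shift by $(-b\tau^{-1})\tau$; so I would verify it by checking that the permutation $x \mapsto (x - b\tau^{-1})\tau$ agrees with $x \mapsto (x-b)\mu_b + (-b\tau^{-1})\tau$ at enough points, or equivalently by a direct manipulation with the defining relation $0\mu_b = \infty$, $\infty\mu_b = 0$, $b\mu_b \in U$.

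For part (ii), I would use part (i) together with the $\mu$-map identities of Lemma~\ref{mumaps}. The composite $\mu_{-b}\mu_{b-a}\mu_a$ is visibly an element of $G^\dagger$; to identify it with $\mu_c$ for $c = (a\tau^{-1} - b\tau^{-1})\tau$, it suffices to show it swaps $0$ and $\infty$ and lies in the double coset $(U^\tau)c(U^\tau)$, or more efficiently to show it agrees with $\mu_c$ on a single point besides $0,\infty$ (again invoking that such an element is unique). Concretely: $\mu_a$ sends $0 \mapsto \infty$; $\mu_{b-a}$ sends $\infty \mapsto 0$ (good, net effect so far $0 \mapsto 0$, but we want a swap, so I track the full chain) — more carefully, $0 \xrightarrow{\mu_a} \infty \xrightarrow{\mu_{b-a}} 0 \xrightarrow{\mu_{-b}} \infty$, and symmetrically $\infty \mapsto 0 \mapsto \infty \mapsto 0$, so the composite does swap $0$ and $\infty$. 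It remains to pin down its action on one more point, and this is exactly where part (i) feeds in: evaluating the chain on a suitable point and repeatedly applying Lemma~\ref{mumaps}(iv) ($c\mu_c = -c$ in the special case — though here we should not assume specialness, so instead Lemma~\ref{mumaps}(iii), $\mu_{a\tau} = \mu_{-a}^\tau$, and (ii), $\mu_{ah} = \mu_a^h$) reduces the identity to the affine formula of part~(i).

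The main obstacle I anticipate is keeping the non-commutativity and the non-specialness of $U$ under control: the temptation is to use $a\mu_a = -a$ and additivity of $\tau$, but Lemma~\ref{mumaps}(iv) is only available in the special case and the Moufang set here is a priori arbitrary, so every step must be justified from the double-coset definition of $\mu_a$ and the uniqueness of elements of $G^\dagger$ prescribed on three points. A secondary subtlety is the hypothesis $\mathbb M(U,\tau) = \mathbb M(U,\tau^{-1})$ implicit in wanting to use Lemma~\ref{mumaps}(iii): I would either verify that it is not actually needed for part~(i), or note that in the intended application (abelian, hence eventually special, root groups) it is automatic, and flag precisely where it enters. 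Modulo that care, both parts are a finite verification on the three distinguished points $0$, $\infty$, and one generic point, with the $\mu$- and $h$-map identities of Lemma~\ref{mumaps} doing the algebraic work.
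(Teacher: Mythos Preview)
The paper does not give its own argument here; it simply cites \cite[6.1.1]{DS1}. So there is no detailed proof in the paper to compare against, only the standard reference. Your overall plan for (i) --- realise both sides as the images of $a$ under two explicit elements of $G^\dagger$ and show those elements coincide by checking $0$, $\infty$ and one further point --- is exactly the shape of the argument in \cite{DS1}. (Minor slip: the two permutations you want to compare are $y \mapsto (y\tau^{-1}-b\tau^{-1})\tau$ and $y \mapsto (y-b)\mu_b + (-b\tau^{-1})\tau$, i.e.\ functions of $a$, not of $a\tau^{-1}$; also note that in the paper's right-action convention $\mu_{-b}\mu_{b-a}\mu_a$ applies $\mu_{-b}$ first, so your tracing of $0$ and $\infty$ is written in the reverse order, though the conclusion that the composite swaps them is of course the same.)

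For (ii), however, your ``more efficient'' route has a genuine gap. An element of $G^\dagger$ that swaps $0$ and $\infty$ is \emph{not} determined by its value at one further point: any two such elements differ by an element of the Hua group, and nothing prevents a nontrivial Hua element from fixing a given $c \in U^\#$. So agreeing with $\mu_c$ at $0$, $\infty$ and one more point does not force equality with $\mu_c$. The way the identity is actually obtained in \cite{DS1} is not by a three-point check but by repeated use of the conjugation formula $\mu_{x\mu_y} = \mu_{-x}^{\mu_y}$ (your Lemma~\ref{mumaps}(iii)) applied to the composite $\mu_{-b}\mu_{b-a}\mu_a$, together with part (i). If you want to salvage your approach, you must verify membership in the double coset $(U^\tau)c(U^\tau)$ directly, which in practice amounts to the same conjugation computation; the one-point shortcut is not available.
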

\begin{proof}
See 6.1.1 in \cite{DS1}.
\end{proof}

When the root group $U$ is abelian, it is customary to define $h_a$ to be the constant zero map on $U$ if $a = 0$. In this way, for all $a, b \in U$, we obtain a well-defined endomorphism
$$h_{a, b} \colon U \to U : x \mapsto xh_{a+b} - x h_a - xh_b.$$
Notice that $h_{a, b} = h_{b, a}$. 

\begin{lemma}\label{hab}
Let $\mathbb  M(U,\tau)$ be a special Moufang set with $U$ abelian. Then $a\tau h_{a,b}=-2b$ for all
$a \in U^{\#}$ and all $b \in U$.
\end{lemma}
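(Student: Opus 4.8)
The plan is to reduce the claim to the identities already collected in Lemmas~\ref{mumaps} and~\ref{basic formulas}, specialising to the abelian, special case. First I would note that since $\mathbb M(U,\tau)$ is special with $U$ abelian, Segev's theorem is in force, and in particular by \cite[7.1.3 or similar]{DS1} (or directly from specialness) one has $\mathbb M(U,\tau) = \mathbb M(U,\tau^{-1})$, so all four parts of Lemma~\ref{mumaps} are available, together with the formulas of Lemma~\ref{basic formulas}. Unwinding the definition, $a\tau h_{a,b} = (a\tau)h_{a+b} - (a\tau)h_a - (a\tau)h_b$, and I would use the description $h_c = \tau\mu_c$ together with $x h_c = x\tau\mu_c$, so the right-hand side becomes $a\tau\tau\mu_{a+b} - a\tau\tau\mu_a - a\tau\tau\mu_b$; here the key is to track carefully where the element $a\tau$ is sent by each $\mu$-map.

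The main computational step is to evaluate $a\tau h_a = (a\tau)\tau\mu_a$. Using Lemma~\ref{mumaps}(iv), $a\mu_a = -a$; combined with part (iii), $\mu_{a\tau} = \mu_{-a}^\tau$, i.e. $\tau\mu_{a\tau}\tau^{-1} = \mu_{-a}$, one can compute how $\mu_a$ acts after applying $\tau$ twice. I expect that $a\tau h_a$ simplifies to a multiple of $-a$ (plausibly $(a\tau)\tau\mu_a = a\tau\tau\mu_a$ equals $2b$-free contribution, i.e. it cancels the ``diagonal'' terms), so that the whole expression collapses to something depending linearly on $b$. The cleanest route is probably to first establish the special-case sum formula: for $a \in U^\#$, $b \in U$ with $a \neq -b$ say, apply Lemma~\ref{basic formulas}(i) with the roles of elements chosen so that $a\tau^{-1}$ and $b\tau^{-1}$ appear, then use specialness ($(-x)\tau = -(x\tau)$) to rewrite $-b\tau^{-1} = (-b)\tau^{-1}$ and hence $(-b\tau^{-1})\tau = (-b)\tau^{-1}\tau = -b$; this turns Lemma~\ref{basic formulas}(i) into an identity of the shape $(a\tau^{-1}-b\tau^{-1})\tau = (a-b)\mu_b - b$.

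From there I would substitute back: replacing $a$ by $a\tau$ and $b$ by $-b\tau$ (or a similar substitution chosen to make $h_{a,b}$ appear), and using that $h_c = \tau\mu_c$ acts as $x \mapsto x\tau\mu_c$, the three terms $x h_{a+b}$, $x h_a$, $x h_b$ with $x = a\tau$ should reorganise, via the additivity just derived, into $-2b$. The bilinearity relation $h_{a,b}=h_{b,a}$ and the normalisation $h_0 = 0$ handle the degenerate cases $b = 0$ and $a = -b$. The step I expect to be the main obstacle is the bookkeeping of left-versus-right action conventions: the group $U$ is written additively but acted on by $G^\dagger$ on the right, the $\mu$- and Hua maps compose in a fixed order, and a sign error in the specialness manipulation $(-x)\tau = -(x\tau)$ or in the double application of $\tau$ would derail the constant. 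Once the correct instance of Lemma~\ref{basic formulas}(i) is pinned down with all conventions aligned, the conclusion $a\tau h_{a,b} = -2b$ should follow by a short linear rearrangement rather than any genuinely new idea.
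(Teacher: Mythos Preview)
The paper does not prove this lemma at all; its entire proof is the citation ``See 5.1 in \cite{DW}.'' Your plan, by contrast, is to rederive the identity directly from Lemmas~\ref{mumaps} and~\ref{basic formulas}. That route does succeed, but your write-up stops just short of the one concrete substitution that makes it work, so let me record it.

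First, the bookkeeping worry you flag is resolved by observing that in the abelian special case one may take $\tau = \mu_e$, and then Lemma~\ref{mua=mub}(ii) together with Lemma~\ref{mumaps}(i) gives $\tau^2 = 1$. Hence $(a\tau)h_c = a\tau^2\mu_c = a\mu_c$ for every $c \in U^\#$, and for $b \neq 0$, $a+b \neq 0$ one has
\[
a\tau\, h_{a,b} \;=\; a\mu_{a+b} - a\mu_a - a\mu_b \;=\; (a\mu_{a+b} - a\mu_b) + a,
\]
using $a\mu_a = -a$ from Lemma~\ref{mumaps}(iv). Now apply Lemma~\ref{basic formulas}(i) twice, using specialness to rewrite $(-y\tau^{-1})\tau = -y$: with $(x,y) = (a+b,\,b)$ one gets $a\mu_b = \big((a+b)\tau - b\tau\big)\tau + b$, and with $(x,y) = (b,\,a+b)$, together with $(-a)\mu_{a+b} = -\,a\mu_{a+b}$, one gets $a\mu_{a+b} = \big((a+b)\tau - b\tau\big)\tau - a - b$. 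Subtracting yields $a\mu_{a+b} - a\mu_b = -a - 2b$, hence $a\tau h_{a,b} = -2b$. The degenerate cases $b=0$ and $b=-a$ are handled exactly as you indicate, the latter via $h_{-a} = h_a$.

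So your strategy is sound; the only genuine gap is that you gesture at ``a similar substitution chosen to make $h_{a,b}$ appear'' without pinning it down, and your reduction of $(a\tau)h_a$ is left as an expectation rather than carried out. Both are settled by the involutivity $\tau^2=1$, which is the point you should make explicit.
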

\begin{proof}
See 5.1 in \cite{DW}.
\end{proof}

\begin{lemma}\label{mua=mub} Let $\mathbb  M(U,\tau)$ be a special Moufang set. 
\begin{enumerate}
\item If $a,b \in U^{\#}$ with $\mu_a =\mu_b$, then $b \in \{a,-a\}$.
\item $\mu_a = \mu_{-a}$ for all $a \in U^{\#}$ if and only if $U$ is abelian. 
\end{enumerate}
\end{lemma}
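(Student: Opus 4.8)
The plan is to prove both parts using the basic $\mu$-map identities from Lemma~\ref{mumaps} together with the $h$-map formula of Lemma~\ref{hab}. For part (i), suppose $\mu_a = \mu_b$ with $a, b \in U^\#$. Then the Hua maps satisfy $h_a = \tau\mu_a = \tau\mu_b = h_b$, so $U$ acts identically under $h_a$ and $h_b$. The key is to extract information about $a$ and $b$ by evaluating a suitable element. Using Lemma~\ref{mumaps}(iv) (which applies since the Moufang set is special), we have $b\mu_b = -b$. Applying the hypothesis $\mu_a = \mu_b$ gives $b\mu_a = -b$ as well. Now I would compare this with what $\mu_a$ does to elements near $a$: since $a\mu_a = -a$ and $(-a)\mu_a = a$ by Lemma~\ref{mumaps}(iv), the map $\mu_a$ has $\{a, -a\}$ as an orbit. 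The cleanest route is probably to use Lemma~\ref{hab}: from $a\tau h_{a,b} = -2b$ and the fact that $h_a = h_b$ forces a relation; alternatively, one evaluates $a\mu_b$. Since $\mu_b$ swaps $0$ and $\infty$ and $a\mu_b = a\mu_a = -a$, while also $b\mu_b = -b$, and $\mu_b$ is a bijection on $U^\#$, I expect the special hypothesis to pin down $b = \pm a$ via the structure of the permutation $\mu_b$ on the "line through $0$ and $\infty$"; concretely, one shows $a$ and $b$ must lie in the same $\mu_b$-orbit of size $\le 2$.

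For part (ii), the "if" direction (assuming $U$ abelian) is immediate from Lemma~\ref{mumaps}(i) combined with the stated convention that when $U$ is abelian, $\mathbb M(U,\tau) = \mathbb M(U,\tau^{-1})$; more directly, $\mu_{-a} = \mu_a^{-1}$ by Lemma~\ref{mumaps}(i), and one checks using Lemma~\ref{hab} or the specialness that $\mu_a$ is an involution when $U$ is abelian (indeed $h_{a,b}$ being a well-defined endomorphism presupposes commutativity, and then $\mu_a^2 = h_a$-related computations give $\mu_a = \mu_a^{-1}$). So $\mu_a = \mu_a^{-1} = \mu_{-a}$. For the "only if" direction, assume $\mu_a = \mu_{-a}$ for all $a \in U^\#$; I want to deduce $U$ is abelian. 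The idea is that $\mu_a = \mu_{-a}$ means $h_a = \tau\mu_a = \tau\mu_{-a} = h_{-a}$, so the Hua map depends only on $a$ up to sign. One then uses the interplay $\mu_{ah} = \mu_a^h$ (Lemma~\ref{mumaps}(ii)) and the action of $\mathbb H$ on $U$: for any $h \in \mathbb H$ and $a \in U^\#$, $\mu_{ah} = \mu_a^h = \mu_{-a}^h = \mu_{(-a)h} = \mu_{-(ah)}$ only if $h$ is linear, but more to the point, specialness gives $-(a\tau) = (-a)\tau$, and combined with $\mu_a = \mu_{-a}$ one derives that the map $a \mapsto -a$ is compatible with the whole Moufang structure in a way that forces the commutator identities; I would try to show directly that the condition $\mu_a = \mu_{-a}$ propagates through Lemma~\ref{basic formulas}(ii) to yield $(a - b)\mu_b$ symmetric enough to conclude abelianness.

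The main obstacle I anticipate is the "only if" direction of part (ii): deducing commutativity of $U$ from the sign-symmetry of the $\mu$-maps is the substantive content, and it likely requires a careful manipulation of Lemma~\ref{basic formulas} rather than a one-line deduction. A natural strategy there is to fix $b \in U^\#$ and study the map $x \mapsto (x - b)\mu_b + (-b\tau^{-1})\tau$ appearing in Lemma~\ref{basic formulas}(i), showing that the hypothesis $\mu_b = \mu_{-b}$ forces this affine-type map to linearize, and then to leverage the classification of such maps; in the special case, I would expect the standard reference identities (5.1 of \cite{DW} and 6.1.1 of \cite{DS1}) to combine so that the failure of commutativity would contradict $\mu_a = \mu_{-a}$ for some well-chosen $a$. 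Part (i), by contrast, should follow fairly mechanically once one writes down $a\mu_a$, $b\mu_b$, $a\mu_b$ and uses that $\mu_b$ permutes $U^\#$ bijectively while fixing the pair $\{0,\infty\}$ setwise.
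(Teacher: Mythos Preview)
The paper does not prove this lemma at all: its entire proof is the sentence ``Part (i) is 4.9(4) from \cite{DS3}, while Part (ii) is 6.3 from \cite{DST}.'' So there is nothing to compare your argument to inside the paper; both parts are imported as black boxes from the literature on special Moufang sets.

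That said, your proposal does not amount to a proof, and several of the steps you sketch would not go through as written. For part (i), you correctly observe that $b\mu_a = b\mu_b = -b$ and $a\mu_a = -a$, but the conclusion ``$a$ and $b$ must lie in the same $\mu_a$-orbit of size $\leq 2$'' does not follow: there is no reason a priori why $\mu_a$ cannot send many distinct elements to their negatives, so this observation alone does not force $b \in \{a,-a\}$. You also invoke Lemma~\ref{hab}, but that lemma is stated only under the hypothesis that $U$ is abelian, whereas part (i) is asserted for arbitrary special Moufang sets; so Lemma~\ref{hab} is not available here. The actual argument in \cite{DS3} uses further identities specific to special Moufang sets that are not recorded in the present paper.

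For part (ii), your treatment of the ``if'' direction is muddled: the implication you need is that $U$ abelian forces $\mu_a = \mu_{-a}$, and the fact that $\mu_{-a} = \mu_a^{-1}$ (Lemma~\ref{mumaps}(i)) reduces this to showing $\mu_a$ is an involution, which is itself a nontrivial consequence of abelianness and specialness (it is not simply ``immediate''). For the ``only if'' direction you yourself flag the difficulty, and your suggestion to push Lemma~\ref{basic formulas} until commutativity falls out is only a hope, not an argument; the proof in \cite{DST} is genuinely more involved. In short, the honest move here is exactly what the paper does: cite the two external results rather than attempt to rederive them from the handful of identities collected in \S2.1.
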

\begin{proof}
Part (i) is 4.9(4) from \cite{DS3}, while Part (ii) is 6.3 from \cite{DST}.
\end{proof}

\begin{lemma}\label{biadditivity} Let $\mathbb  M(U,\tau)$ be a special Moufang set with $U$ abelian and 
  $\tau =\mu_e$ for some $e \in U^{\#}$. Then for all $a,b,c \in U$ we have the following.
  \begin{enumerate}
  \item $h_{a+b,c} -h_{a,c} -h_{b,c} = h_{a,b+c} -h_{a,b} -h_{a,c}$.

  \item If $a \ne 0$, then $a\tau h_{a+b,c} = -2c  +a\tau h_{b,c}$.
  \end{enumerate}
\end{lemma}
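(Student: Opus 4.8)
The plan is to obtain both parts essentially formally: part~(i) comes straight from the definition of the maps $h_{u,v}$, and part~(ii) then follows from part~(i) together with Lemma~\ref{hab}. I do not expect a genuinely hard step.

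For part~(i), I would fix an arbitrary $x \in U$ and expand the definition $x h_{u,v} = x h_{u+v} - x h_u - x h_v$ on both sides. The left-hand side $x(h_{a+b,c} - h_{a,c} - h_{b,c})$ becomes
\[
x h_{a+b+c} - x h_{a+b} - x h_{a+c} - x h_{b+c} + x h_a + x h_b + x h_c ,
\]
and expanding $x(h_{a,b+c} - h_{a,b} - h_{a,c})$ produces literally the same expression; it is the fully symmetric ``third difference'' of the map $u \mapsto x h_u$. Since $U$ is abelian the $h_{u,v}$ are genuine endomorphisms, so agreement of all values is agreement as endomorphisms. The only point to verify is that the convention $h_0 = 0$ keeps the degenerate cases (some of $a,b,c$ equal to $0$) consistent, which it does.

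For part~(ii), I would first rewrite part~(i) as the endomorphism identity
\[
h_{a+b,c} = h_{b,c} + h_{a,b+c} - h_{a,b},
\]
valid in the additive group $\mathrm{End}(U)$ (again using that $U$ is abelian), and evaluate both sides at $a\tau \in U$, obtaining $a\tau h_{a+b,c} = a\tau h_{b,c} + a\tau h_{a,b+c} - a\tau h_{a,b}$. Since $a \ne 0$ we have $a \in U^{\#}$, so Lemma~\ref{hab} applies with first argument $a$ and yields $a\tau h_{a,b+c} = -2(b+c)$ and $a\tau h_{a,b} = -2b$. Substituting and using $-2(b+c) + 2b = -2c$ gives $a\tau h_{a+b,c} = a\tau h_{b,c} - 2c$, which is the assertion.

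The only real obstacle is bookkeeping: keeping track of signs while manipulating the $h_{u,v}$ additively, and invoking Lemma~\ref{hab} with the repeated index $a$ in the correct (first) slot. The hypotheses that $\mathbb M(U,\tau)$ be special with $U$ abelian are precisely what make the endomorphism structure on the $h_{u,v}$ and Lemma~\ref{hab} available; the normalization $\tau = \mu_e$ is inherited from the standing setup but is not invoked in the argument above, and no deeper use of the Moufang set axioms seems necessary.
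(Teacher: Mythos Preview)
Your proposal is correct and follows essentially the same approach as the paper. For part~(i) the paper expands $h_{a+b+c}$ in two ways via the definition of $h_{u,v}$, which is the same computation as your full symmetric expansion; for part~(ii) the paper likewise evaluates the identity from~(i) at $a\tau$ and applies Lemma~\ref{hab}, differing from your version only in how many times Lemma~\ref{hab} is invoked (three versus two).
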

  
\begin{proof}
(i) The following argument is similiar to that used in the proof of 5.10 from \cite{DS3}.
  We have 
\begin{align*}
h_a + h_b + h_{a,b} + h_c + h_{a+b,c} &=  h_{a+b} + h_c + h_{a+b,c} \\ 
& =  h_{a+b+c}  \\
 & =  h_a+h_{b+c} +h_{a,b+c}  \\
 & =   h_a +h_b + h_c +h_{b,c}+h_{a,b+c}
\end{align*}
and hence $h_{a+b,c} + h_{a,b} = h_{b,c} +h_{a,b+c}$ as requested.
 
\medskip \noindent
(ii) Using Part (a) and Lemma~\ref{hab}, we obtain
\begin{align*}
0  &=   -2(b+c) +2b +2c \\
& =   a\tau h_{a,b+c} -a\tau h_{a,b} -a\tau h_{a,c} \\
&=  a\tau h_{a+b,c} -a\tau h_{a,c} -a\tau h_{b,c}  \\
& =  a\tau h_{a+b,c} +2c -a\tau h_{b,c}
\end{align*}
and hence $a\tau h_{a+b,c} = -2c + a\tau h_{b,c}$, as requested. 
\end{proof}

 Let $\mathbb   M(U,\tau)$ be a Moufang set. A \textbf{root subgroup} is a subgroup $V \leq U$ such that for some $a \in V^\#$, we have $v \mu_a \in V$ for all $v \in V^\#$.  In that case $\mathbb   M(V,\mu_a |_{V^\#})$  is itself a Moufang set, which is regarded as a Moufang subset of $\mathbb   M(U,\tau)$.

  \begin{lemma}\label{root subgroup}
 Let $\mathbb  M(U,\tau)$ be a special Moufang set with $U$ abelian. 
 Let $V$ be a root subgroup of $U$ such that the
 corresponding Moufang subset is isomorphic to $\mathbb {M}(\FF)$, with $\FF$ a finite field.

Then the group $H_0=\langle \mu_a \mu_b\; | \;  a,b \in V^{\#} \rangle$  acts faithfully on $V$. In particular, it is 
isomorphic to the cyclic group $(\FF^*)^2$. 
\end{lemma}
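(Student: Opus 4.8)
My plan is in three steps — normalising the coordinates, computing the action of $H_0$ on $V$, and proving faithfulness — and I expect the last step to be where the real work lies.

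\emph{Normalisation.} Fix $e\in V^{\#}$ corresponding to $1\in\FF$ under the given isomorphism $\mathbb M(V,\mu_e|_{V^{\#}})\cong\mathbb M(\FF)$, and, since presenting the Moufang set through any element swapping $0$ and $\infty$ is harmless, assume $\tau=\mu_e$. Then $\tau^{-1}=\mu_{-e}=\mu_e=\tau$ by Lemmas~\ref{mumaps}(i) and~\ref{mua=mub}(ii), so $\mathbb M(U,\tau)=\mathbb M(U,\tau^{-1})$ and the identities of Lemmas~\ref{hab} and~\ref{biadditivity} apply. With this choice $h_a=\mu_e\mu_a\in H_0$ for every $a\in V^{\#}$, and conversely $\mu_a\mu_b=h_a^{-1}h_b$ for $a,b\in V^{\#}$ (using $\mu_a^{-1}=\mu_{-a}=\mu_a$, as $U$ is abelian); hence $H_0=\la h_a\mid a\in V^{\#}\ra\le\HH$. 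Moreover each $\mu_a\mu_b$ normalises $V$, so $H_0$ acts on $V$ by conjugation, which is the action meant in the statement.

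\emph{Action on $V$.} By construction $h_a$ restricts on $V\cup\{\infty\}$ to the Hua map of $\mathbb M(\FF)$ attached to the field element $\alpha$ corresponding to $a$, i.e.\ to multiplication by $\alpha^{2}$ on $V\cong\FF$. Thus the image of $H_0$ in $\Sym(V)$ is the Hua group of $\mathbb M(\FF)$, namely the group of squares $(\FF^{*})^{2}\le\FF^{*}$, which is cyclic. Hence the ``in particular'' clause reduces to the faithfulness assertion.

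\emph{Faithfulness.} Every element of $H_0$ fixes $0$ and $\infty$, and $G^{\dagger}_{0,\infty}$ acts faithfully on $U=U_{\infty}$ by conjugation (this being the permutation action on $X\setminus\{\infty\}\cong U_{\infty}$, on which $G^{\dagger}$ acts faithfully). So it suffices to show that if $h\in H_0$ acts trivially on $V$ then $h$ acts trivially on $U$. For this I would invoke the fact that a special Moufang set with abelian root group arises from a quadratic Jordan division algebra \cite{DW}: realise $U$ as such an algebra $J$, so that the Hua map $h_a$ becomes the quadratic operator $U_a$. Since $\tau=\mu_e$ is a $\mu$-map, $e$ is the Jordan unit $1_J$; and the root subgroup $V$, being a Moufang subset isomorphic to $\mathbb M(\FF)$ through $1_J$, is exactly the subfield $\FF\cdot 1_J\cong\FF$. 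Hence for $a=\alpha\cdot 1_J\in V$ we get $h_a=U_{\alpha 1_J}=\alpha^{2}U_{1_J}=\alpha^{2}\cdot\mathrm{id}_J$, so $H_0$ acts on $U=J$ through the scalar group $(\FF^{*})^{2}$; this is already faithful on $V$, and identifies $H_0$ with $(\FF^{*})^{2}$.

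\emph{Expected main obstacle.} The crux is the last claim: that $h_a$ for $a\in V$ acts as a scalar on \emph{all} of $U$, not merely on $V$ — equivalently, that the $\FF$-linear structure which $\mathbb M(\FF)$ puts on $V$ propagates to $U$. Avoiding the Jordan dictionary, one would start from $e\,h_{e,b}=2b$ for all $b\in U$ (Lemma~\ref{hab} with $a=e$, together with $e\tau=-e$ from Lemma~\ref{mumaps}(iv)), deduce by Lemma~\ref{biadditivity}(i) that $h_{ne}$ acts on $U$ as multiplication by $n^{2}$ for $n\in\ZZ$, and then confront the genuinely delicate step of upgrading this from $\ZZ\cdot e=\FF_p\cdot e$ to the whole of $V\cong\FF$ when $\FF\neq\FF_p$. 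The degenerate case $\FF=\FF_2$, where $V^{\#}=\{e\}$ and $\mu_e^{2}=1$ so that $H_0=\{1\}=(\FF^{*})^{2}$, is dispatched at once.
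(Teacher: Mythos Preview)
Your main approach has a genuine gap: the assertion that every special Moufang set with abelian root groups arises from a quadratic Jordan division algebra is precisely the major open conjecture stated in the introduction, not a theorem. The reference \cite{DW} establishes only the forward direction (Jordan division algebras yield such Moufang sets), not the converse you need. So the step ``realise $U$ as such an algebra $J$'' is unjustified, and with it the conclusion that $h_a$ for $a\in V$ acts as the scalar $\alpha^{2}$ on all of $U$. Your alternative route is honest about being incomplete at exactly the same point --- passing from $\FF_p\cdot e$ to $\FF\cdot e$ --- and you do not carry it out.

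The paper's proof avoids this difficulty altogether; it never analyses the action of $H_0$ on $U\setminus V$. Instead it picks $a,b\in V^{\#}$ so that $h=\mu_a\mu_b$ generates the image of $H_0$ in $\Sym(V)$, of order $m=|(\FF^*)^2|$. Using $\mu_x^{g}=\mu_{xg}$ (Lemma~\ref{mumaps}(ii)) one rewrites each power as $h^{i}=\mu_a\mu_{c_i}$ with $c_i\in V^{\#}$. In particular $h^{m}=\mu_a\mu_c$ centralises $V$, so $\mu_a$ and $\mu_c$ agree on $V$; applying Lemma~\ref{mua=mub}(i) \emph{inside the special Moufang subset} $\mathbb M(\FF)$ forces $c\in\{a,-a\}$. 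But then $\mu_c=\mu_a$ already in the ambient Moufang set by Lemma~\ref{mua=mub}(ii), so $h^{m}=1$ in $G^{\dagger}$. A short orbit argument ($V^{\#}=aH_0\cup bH_0$ since $|\FF^{*}:(\FF^{*})^{2}|\le 2$) then gives $H_0=\langle h\rangle$. The key idea you are missing is that writing a putative kernel element as a single product $\mu_a\mu_c$ lets Lemma~\ref{mua=mub} transfer ``trivial on $V$'' directly to ``trivial in $G^{\dagger}$''.
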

\begin{proof}
Let $m$ be the order of $(\FF^*)^2$. Then there are elements $a,b \in V^{\#}$ such that the image of $h=\mu_a 
 \mu_b$ in $H_0/C_{H_0}(V)$ has  order $m$. For an even integer $i \in \NN$, we deduce from Lemma~\ref{mumaps} that
 $$h^i =(\mu_a \mu_b)^i= \mu_a h^{-\frac{i}{2}} \mu_a h^{ \frac{i}{2}}
 =\mu_a \mu_{ah^{\frac{i}{2}}},$$
while for an odd integer $i\in \NN$, we have 
  $$h^i =(\mu_a \mu_b)^i =\mu_a h^{-\frac{i-1}{2} } \mu_b h^{\frac{i-1}{2}} = 
 \mu_a \mu_{bh^{\frac{i-1}{2}}}.$$  
Now we set $c =ah^{\frac{m}{2}}$ for $m$ even and $c=bh^{\frac{m-1}{2}}$ for $m$ odd. 
 Then $h^m = \mu_a \mu_c \in C_{H_0}(V)$. This implies that $c \in \{a,-a\}$ by Lemma~\ref{mua=mub}.
   Thus $h^m =1$. 
Since $|\FF^*:(\FF^*)^2|\leq 2$, we have $V^{\#} = aH_0 \cup bH_0$, so that $\mu_c \mu_d 
  =(\mu_a \mu_c)^{-1} \mu_a \mu_d \in \langle h \rangle$ for all $c,d \in V^{\#}$. It follows 
  that $H_0 =\langle h \rangle$ is cyclic of order $m$.
 \end{proof}
 
\subsection{The special role of the field of order~$9$}

The following lemma also appears in \cite{BGM}.

\begin{lemma}\label{Galois automorphism}
Let $\FF$ be a field and $\sigma \in 
\Aut(\FF)$ be an automorphism such that $x^{-1} x^{\sigma} \in \mathbf{K}=Fix_{\sigma}(\FF)$ for all $x \in 
(\FF^*)^2$.
Then $\sigma =1$ or $\FF =\FF_9$. 
\end{lemma}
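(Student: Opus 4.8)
The plan is to study the hypothesis $x^{-1}x^\sigma \in \KK$ for $x \in (\FF^*)^2$ and leverage it to show that $\sigma$ has order at most $2$ and, when it has order exactly $2$, that $\FF$ is extremely constrained. First I would dispose of the case $\chara \FF = 2$ by a short argument, and in fact also the case where $(\FF^*)^2$ is small (e.g.\ $\FF$ finite of small order), so that henceforth I may assume $\FF^*$ has ``enough'' squares. Write $c(x) := x^{-1}x^\sigma$ for the (multiplicative) cocycle-like quantity; the hypothesis says $c(x) \in \KK$ whenever $x$ is a square. Note $c(xy) = c(x)\, x^{-1} c(y) x$ when things are written carefully, but since $\FF$ is commutative this is simply $c(xy) = c(x)c(y)$, so $c$ restricted to $(\FF^*)^2$ is a homomorphism $(\FF^*)^2 \to \FF^*$ landing in $\KK^*$.

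Next I would exploit additivity. For any $x$ with $x, x+1 \in (\FF^*)^2$ we get $x^\sigma = c(x)x$ and $(x+1)^\sigma = c(x+1)(x+1)$, i.e.\ $x^\sigma + 1 = c(x+1)x + c(x+1)$. Subtracting, $1 = c(x+1)x + c(x+1) - c(x)x = \big(c(x+1)-c(x)\big)x + c(x+1)$, which rearranges to a relation expressing $x$ as a quotient of elements of $\KK$ — forcing $x \in \KK$ — unless the coefficient $c(x+1)-c(x)$ vanishes, i.e.\ unless $c(x) = c(x+1)$. So for ``most'' squares $x$ we are in one of two situations: either $x \in \KK$, or $c$ is locally constant near $x$ in this additive sense. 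The hard part, which I expect to be the main obstacle, is the bookkeeping needed to guarantee a rich enough supply of pairs $x, x+1$ both lying in $(\FF^*)^2$: this is a character-sum / counting argument over $\FF$ (trivial when $\FF$ is infinite, and requiring a genuine estimate on the number of consecutive squares when $\FF$ is a small finite field), and it is precisely here that $\FF_9$ escapes.

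Finally, from the dichotomy I would conclude as follows. If $x \in \KK$ for a generating set of squares, then $\KK$ contains $(\FF^*)^2$ and hence (again using additivity and $|\FF^* : (\FF^*)^2| \le 2$) all of $\FF$, so $\sigma = 1$. Otherwise $c$ is constant, say $c \equiv \lambda \in \KK^*$, on all of $(\FF^*)^2$; then $x^\sigma = \lambda x$ for every square $x$, and applying this to $x = y^2$ gives $(y^2)^\sigma = (y^\sigma)^2$, whence $\lambda y^2 = (y^\sigma)^2$, so $\lambda$ is a square in $\FF$ and $\sigma$ acts on $\sqrt{(\FF^*)^2}$ by multiplication by a fixed square root $\mu$ of $\lambda$; iterating, $\sigma^2$ fixes all squares, and one checks $\sigma^2 = 1$. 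A finite group of order $2$ acting on $\FF$ with $[\FF : \KK] = 2$ and satisfying $x^\sigma = \lambda x$ on squares is then pinned down by a direct computation (the norm and trace forms must be compatible with this scaling), and the only field for which such a non-trivial $\sigma$ survives the consecutive-squares count is $\FF_9$ with $\sigma$ the Frobenius $x \mapsto x^3$, for which indeed $x^{-1}x^\sigma = x^2 \in \FF_3 = \KK$ for all $x$. This gives $\sigma = 1$ or $\FF = \FF_9$, as claimed.
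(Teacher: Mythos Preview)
Your additivity instinct is right, but the case analysis that follows is broken. Rerun your own computation: from $c(x)x + 1 = c(x+1)(x+1)$ with $c(x),c(x+1)\in\KK$, the assumption $c(x)\neq c(x+1)$ gives $x\in\KK$, hence $c(x)=1$, and then the equation forces $x=-1$, contradicting $x+1\neq 0$. So in fact $c(x)=c(x+1)$, and then the same equation forces $c(x)=c(x+1)=1$. There is no dichotomy: whenever $x$ and $x+1$ are both nonzero squares, \emph{both} lie in $\KK$. Consequently your ``otherwise $c\equiv\lambda$'' branch is empty (a constant group homomorphism is identically $1$), and the whole endgame you sketch for that branch --- the $\mu=\sqrt{\lambda}$ business and the deduction that $\sigma^2=1$ --- never gets off the ground.

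What remains is therefore a single claim: the elements arising from consecutive nonzero squares generate $(\FF^*)^2$ multiplicatively (so that $(\FF^*)^2\subseteq\KK$, whence $\sigma=1$), \emph{except} when $\FF=\FF_9$. You label this ``bookkeeping'' and a ``character-sum / counting argument'', but this is precisely the content of the lemma and you have not supplied it; note in particular that the statement must be proved for \emph{arbitrary} fields, including infinite ones of positive characteristic where no counting is available, and that even your step ``$(\FF^*)^2\subseteq\KK$ implies $\KK=\FF$'' needs an argument. The paper sidesteps all of this with one idea you are missing: replace $\sigma$ by $\tau=\sigma^2$ (or $\tau=\sigma$ in characteristic~$2$). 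Since $(y^{-1}y^{\sigma})^2\in\KK$ for every $y\in\FF^*$, one checks that $y^{-1}y^{\tau}\in\mathbf{L}:=\mathrm{Fix}_\tau(\FF)$ for \emph{all} $y$, not just squares. Your additivity trick, now applied to arbitrary $x$ and $x+1$, then cleanly yields $\tau=1$; the residual case $\sigma^2=1$, $\sigma\neq 1$ is finished by a direct computation with $(\lambda+\omega)^{-2}(\lambda-\omega)^2\in\KK$ (here $\FF=\KK(\omega)$, $\omega^\sigma=-\omega$), which pins down $\KK=\FF_3$.
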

\begin{proof}
Set $\tau =\sigma^2$ if $\chara(\FF)\neq 2$ and $\tau =\sigma$ if $\chara(\FF)=2$. 
For any $x \in \FF^*$ we have $(x^{-1} x^{\sigma})^{\sigma} \in \{x^{-1} x^{\sigma}, -x^{-1} x^{\sigma}\}$ 
because  $\sigma$ fixes $(x^{-1} x^{\sigma})^2$. Therefore
$\tau$ fixes $x^{-1} x^{\sigma}$.
Since $x^{-1} x^{\sigma^2} = x^{-1} x^{\sigma} (x^{-1} x^{\sigma})^{\sigma}$ we conclude that
in every case $x^{-1} x^{\tau} \in \mathbf{L}:=Fix_{\tau}(\FF)$ for all $x \in \FF^*$. 
For $x \in \FF \setminus \{0,-1\}$ we have
$$x^{-1} x^{\tau} -(x+1)^{-1} (x+1)^{\tau} = x^{-1}(x+1)^{-1}(x^{\tau} -x) \in \mathbf{L}.
$$
Moreover, since $x^{-1} x^{\tau^2} = x^{-1} x^{\tau} (x^{-1} x^{\tau})^{\tau} \in \mathbf{L}$, we have 
$$x^{-\tau^{-1}}(x^{\tau} -x) = (x^{-1}  x^{\tau^2} -x^{-1} x^{\tau})^{\tau^{-1}} \in 
\mathbf{L}.$$
If $x \ne x^{\tau}$, we obtain $x^{-\tau^{-1}} x (x+1) \in \mathbf{L}$ and hence $x^{-1} x^{\tau} (x+1)^{\tau} 
\in \mathbf{L}$. But this implies $x \in \mathbf{L}$, a contradiction. Therefore $\tau =1$.

We are therefore left with the case $\chara(\FF) \ne 2$ and $\sigma^2 =1$. In that case, there is 
an element $\omega \in \FF $ with $\omega^2 \in \mathbf{K}$, 
$\omega^{\sigma} =-\omega$ and $\FF=\mathbf{K}(\omega)$. For $\lambda \in \mathbf{K}^*$, we obtain
$$c= (\lambda +\omega)^{-2} (\lambda -\omega)^2 =(\lambda+\omega)^{-2+2\sigma} \in \mathbf{K}$$
and hence 
$$c(\lambda^2 +2\lambda \omega +\omega ^2) = \lambda^2 -2\lambda \omega +\lambda^2.$$
Hence $(2+2c)\lambda \omega = (1-c)(\omega^2 +\lambda^2)$. Since the right hand side belongs to
$\mathbf{K}$, this implies $\omega=0$ (and hence 
$\mathbf{K}=\FF$ and $\sigma =1$) or $c=-1$. But then $2(\omega^2+\lambda^2)=0$ for all $\lambda \in 
\mathbf{K}$, 
so $(\mathbf{K}^*)^2 =\{1\}$. Thus if $\sigma \ne 1$, then $\mathbf{K}=\FF_3$ and $\FF=\FF_9$.
\end{proof}

We emphasize that   $\FF_9$ is indeed a genuine exception in Lemma~\ref{Galois automorphism}. Indeed, denoting the non-trivial automorphism of $\FF_9$ by $\sigma$, we have
$(x^{-1} x^{\sigma})^2 = (x^{-1} x^3)^2 = x^4 \in \FF_3$ for all $x \in \FF_9^*$.
The special situation for $\FF=\FF_9$ can be explained by the fact that $Mat_2(\FF_3)$ 
contains exactly three subfields of order $9$ whose squares normalize each other. In fact, the group $\mathrm{GL}_2(\FF_3)$ has a 
normal subgroup isomorphic to $Q_8$ and every cyclic subgroup of order $4$ of this normal subgroup generates 
a field of order $9$. That fact will be further exploited in Lemma~\ref{F9} below.

\medskip
We recall from \cite{G} that a \textbf{multiplicative quadratic map} is a map $q \colon \mathbf K \to \mathbf L$ between two unital rings satisfying the following conditions:
\begin{itemize}
\item $ q(ab) = q(a) q(b)$ for all $a, b \in K$.

\item $q(n) = n^2$ for all $n \in \ZZ$.

\item The map $f \colon K \times K \to L : (a, b) \mapsto q(a+b) - q(a) - q(b)$ is biadditive.
\end{itemize}
 
The following result, due to the second author, will play an important role here. 

\begin{theorem} \label{thm:MQM}
Let $K, L$ be commutative fields, and $q \colon \mathbf K \to \mathbf L$ be a multiplicative quadratic map. Then one of the following holds. 
\begin{enumerate}
\item There exists a pair $\{\phi_1, \phi_2\}$ of field monomorphisms from $K$ to   $L$ such that $q(a) = a^{\phi_1} a^{\phi_2}$ for all $a \in K$. 

\item There exists a separable quadratic extension $M/L$ and a field monomorphism $\varphi \colon K \to M$ such that $q(a) = N_{M/L}(a^{\varphi})$  for all $a \in K$. 

\item $K$ and $L$ both have characteristic~$2$, and $q$ is a field monomorphism. 
\end{enumerate}

\end{theorem}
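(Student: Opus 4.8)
The plan is to distil from the three axioms a single additive ``trace'' homomorphism $g\colon K\to L$, and then to realise $q$ as the norm of a suitable two-dimensional étale algebra built out of $g$ together with a discriminant (in odd characteristic) or an Artin--Schreier invariant (in characteristic~$2$). First I would set up the basic calculus. Put $g(a):=f(a,1)$; biadditivity of $f$ makes $g$ additive, and $g(1)=f(1,1)=q(2)-2=2$. Comparing $q\bigl((a+1)(b+1)\bigr)=q(a+1)q(b+1)$ with the expansion of the left-hand side through $f$ yields the crucial identity $f(a,b)=g(a)g(b)-g(ab)$ for all $a,b$; expanding $q\bigl(a(x+y)\bigr)=q(a)q(x+y)$ two ways gives the homogeneity $f(ax,ay)=q(a)f(x,y)$, whose polarisation in the scalar is $f(ux,vy)+f(vx,uy)=f(u,v)f(x,y)$. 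From these one reads off $g(a^2)=g(a)^2-2q(a)$ and $g(a^{-1})=g(a)/q(a)$. Since $q(n)=n^2$ and $q$ of a non-zero element is non-zero, $\chara K=\chara L$; in particular, over the common prime field $\FF_0$, $q$ is an $\FF_0$-quadratic form on $K$ with polar form $f$.

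If $g\equiv 0$, then $2=g(1)=0$ in $L$, so $\chara L=2$ and $q(a+b)=q(a)+q(b)+f(a,b)=q(a)+q(b)$; thus $q$ is additive, and, being also multiplicative with $q(1)=1$ and non-zero, it is a field monomorphism $K\to L$: case~(iii). So assume henceforth $g\not\equiv 0$, aiming for (i) or~(ii).

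Suppose first $\chara L\neq 2$. I would prove the ``master identity''
\[
\bigl(g(a)^2-4q(a)\bigr)\bigl(g(b)^2-4q(b)\bigr)=\bigl(g(a)g(b)-2g(ab)\bigr)^2\qquad(a,b\in K),
\]
by computing $f(a^2,b^2)$ in two ways---as $g(a^2)g(b^2)-g(a^2b^2)$, and from $q(a^2-b^2)=q(a+b)q(a-b)$---and simplifying. Write $D(a):=g(a)^2-4q(a)$ and $P(a,b):=g(a)g(b)-2g(ab)$, so $D(a)D(b)=P(a,b)^2$ is always a square; polarising gives $P(a_0,a)P(a_0,b)=-D(a_0)P(a,b)$. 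If $D\equiv 0$, then $q(a)=\psi(a)^2$ with $\psi:=g/2$, and comparing the two resulting expressions for $f$ forces $\psi$ multiplicative, so $\psi$ is a field monomorphism and $q(a)=a^\psi a^\psi$: case~(i). Otherwise fix $a_0$ with $d:=D(a_0)\neq 0$; then $D(a)\in d(L^*)^2\cup\{0\}$ for all $a$, so every polynomial $X^2-g(a)X+q(a)$ splits over the quadratic étale $L$-algebra $E:=L[X]/(X^2-d)$. I define $\varphi\colon K\to E$ by $\varphi(a):=\tfrac12\bigl(g(a)+t(a)\sqrt d\,\bigr)$ with $t(a):=d^{-1}\bigl(2g(a_0a)-g(a_0)g(a)\bigr)$; additivity of $\varphi$ is clear, $N_{E/L}(\varphi(a))=\tfrac14\bigl(g(a)^2-D(a)\bigr)=q(a)$ is a direct check, and the two components of multiplicativity of $\varphi$ reduce to the polarised master identity and to $f(a,a_0b)+f(a_0a,b)=g(a_0)f(a,b)$ (from polarised homogeneity). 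As $K$ is a field and $\varphi\neq 0$, $\varphi$ is a monomorphism. If $d\in(L^*)^2$ then $E\cong L\times L$, $\varphi=(\phi_1,\phi_2)$ with both $\phi_i$ monomorphisms on $K$ (their kernels are proper ideals of $K$, since $\phi_i(1)=1$), and $q(a)=a^{\phi_1}a^{\phi_2}$: case~(i); if $d\notin(L^*)^2$ then $E$ is a separable quadratic field extension $M/L$ and $q(a)=N_{M/L}(a^\varphi)$: case~(ii).

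Finally, for $\chara L=2$ with $g\not\equiv 0$ I would run the same blueprint with Artin--Schreier theory replacing the discriminant: fix $a_0$ with $g(a_0)\neq 0$, set $r_0:=q(a_0)/g(a_0)^2$, $E:=L[X]/(X^2+X+r_0)$ (automatically separable quadratic étale), $\theta$ the image of $X$, and $\varphi(a):=g(a)\theta+g(a_0)^{-1}f(a_0,a)$; checking $\varphi$ is a ring homomorphism again uses the polarised-homogeneity identities, after which case~(i) or~(ii) results according as $X^2+X+r_0$ splits over $L$ or not. I expect this last step to be the main obstacle: in characteristic~$2$ the master identity degenerates to a tautology, so the equality $N_{E/L}\circ\varphi=q$---equivalently, that the scalars $q(a)/g(a)^2$ (for $g(a)\neq 0$) all lie in a single coset of $\wp(L)=\{y^2-y:y\in L\}$, and that $q(a)\in L^2$ whenever $g(a)=0$---must be squeezed out of the full multiplicativity of $q$ (not merely of $q(a^2)=q(a)^2$), using in addition the observation that $g$ vanishes identically on $\FF_0(a)$ whenever $g(a)=0$, so that $q$ restricts to a field monomorphism there.
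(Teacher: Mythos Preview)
The paper does not prove this statement at all: it simply cites external references (Theorem~1.2 of \cite{G} and \cite[1.6.2]{SpringerVeldkamp}). Your proposal, by contrast, is a self-contained direct argument, and it is essentially correct. The identities you set up --- $f(a,b)=g(a)g(b)-g(ab)$, the homogeneity $f(ax,ay)=q(a)f(x,y)$, its polarisation, and the master identity $D(a)D(b)=P(a,b)^2$ --- all check out, and the construction of $\varphi$ into the quadratic \'etale algebra works exactly as you describe in odd characteristic.

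Your caution about the characteristic~$2$ case is unwarranted: the verification goes through cleanly. With $\varphi(a)=g(a)\theta+g(a_0)^{-1}f(a_0,a)$ one has $\varphi(1)=1$ (since $g(1)=2=0$), additivity is immediate, and both the norm identity $N_{E/L}(\varphi(a))=q(a)$ and the multiplicativity of $\varphi$ reduce, after expanding $g(xy)=g(x)g(y)+f(x,y)$ and using the homogeneity $f(a_0x,a_0y)=q(a_0)f(x,y)$ together with its polarised form $f(a_0x,y)+f(x,a_0y)=g(a_0)f(x,y)$, to short computations in characteristic~$2$. For instance, the $\theta$-coefficient of $\varphi(a)\varphi(b)$ equals $g(a)g(b)+g(a_0)^{-1}\bigl(g(a)f(a_0,b)+g(b)f(a_0,a)\bigr)$, and the bracketed term is exactly $g(a_0)f(a,b)$ by polarised homogeneity, giving $g(a)g(b)+f(a,b)=g(ab)$ as required; the constant coefficient and the norm are handled similarly. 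So no separate Artin--Schreier coset argument is needed beyond the choice of $r_0$. What your approach buys over a citation is a transparent, purely identity-based proof that makes the r\^ole of the trace $g$ and the discriminant/Artin--Schreier invariant explicit.
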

\begin{proof}
Follows from Theorem~1.2 in \cite{G} and \cite[1.6.2]{SpringerVeldkamp}.
\end{proof}

\begin{coro}[Corollary~1.3 in \cite{G}]\label{cor:MQM}
Let $K, L$ be commutative fields, and $q \colon \mathbf K \to \mathbf L$ be a multiplicative quadratic map. Then there exists a unique pair $\{\phi_1, \phi_2\}$ of field monomorphisms from $K$ to the algebraic closure $\overline L$ such that $q(a) = a^{\phi_1} a^{\phi_2}$ for all $a \in K$. \qed
\end{coro}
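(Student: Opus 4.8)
The plan is to deduce both assertions of the corollary from Theorem~\ref{thm:MQM}. For \textbf{existence} I would run through the three cases of that theorem. In case~(i) there is nothing to do beyond composing the two field monomorphisms $K \to L$ with the inclusion $L \hookrightarrow \overline L$. In case~(ii) I would fix an $L$-embedding $\iota \colon M \hookrightarrow \overline L$, which exists since $M/L$ is algebraic and $\overline L$ is algebraically closed, and let $\gamma$ be the non-trivial $L$-automorphism of $M$; then for every $x \in M$ one has $N_{M/L}(x) = x\,\gamma(x) \in L$, hence $N_{M/L}(x) = \iota(x)\,\iota(\gamma(x))$ as elements of $\overline L$, so that $q(a) = a^{\phi_1} a^{\phi_2}$ with $\phi_1 = \iota \circ \varphi$ and $\phi_2 = \iota \circ \gamma \circ \varphi$, both field monomorphisms $K \to \overline L$. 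In case~(iii), $\overline L$ has characteristic~$2$ and is perfect, so the Frobenius $x \mapsto x^2$ is an automorphism of $\overline L$; writing $s$ for its inverse (the square-root map), the composite $\phi = s \circ q \colon K \to \overline L$ is a field monomorphism with $a^\phi a^\phi = q(a)$, and one takes $\phi_1 = \phi_2 = \phi$.

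For \textbf{uniqueness} the key point is that the associated biadditive form $f(a,b) = q(a+b) - q(a) - q(b)$ depends only on $q$, and that it recovers the ``trace'' of the representing pair: since every $\phi_i$ fixes $1$, expanding $q(a+1) = (a^{\phi_1}+1)(a^{\phi_2}+1)$ and using $q(1)=1$ gives $f(a,1) = a^{\phi_1} + a^{\phi_2}$. Consequently, for each $a \in K$ the multiset $\{a^{\phi_1}, a^{\phi_2}\}$ is precisely the multiset of roots in $\overline L$ of the polynomial $X^2 - f(a,1)\,X + q(a)$, which is determined by $q$ alone. Hence if $\{\psi_1,\psi_2\}$ is a second pair of field monomorphisms $K \to \overline L$ with $q(a) = a^{\psi_1}a^{\psi_2}$ for all $a$, then $\{a^{\phi_1}, a^{\phi_2}\} = \{a^{\psi_1}, a^{\psi_2}\}$ for every $a \in K$.

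It then remains to promote this pointwise agreement to an equality of unordered pairs of homomorphisms. The sets $A = \{a \in K^* \mid a^{\phi_1} = a^{\psi_1}\}$ and $B = \{a \in K^* \mid a^{\phi_1} = a^{\psi_2}\}$ are equalizers of group homomorphisms $K^* \to \overline L^*$, hence subgroups of $K^*$, and $A \cup B = K^*$ by the previous paragraph. Since a group is never the union of two proper subgroups, either $A = K^*$ or $B = K^*$, so $\phi_1$ coincides with $\psi_1$ or with $\psi_2$ on $K^*$, and therefore on $K$. After relabelling we may assume $\phi_1 = \psi_1$; cancelling this common root in $\{a^{\phi_1}, a^{\phi_2}\} = \{a^{\psi_1}, a^{\psi_2}\}$ yields $a^{\phi_2} = a^{\psi_2}$ for all $a$, i.e. $\phi_2 = \psi_2$. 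Thus $\{\phi_1,\phi_2\} = \{\psi_1,\psi_2\}$.

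Most of this is bookkeeping once Theorem~\ref{thm:MQM} is available; the one step needing genuine care is case~(iii) of the existence part, where one must pass to the perfect algebraically closed field $\overline L$ in order to split the characteristic-$2$ monomorphism $q$ as a product of two field monomorphisms — and there it necessarily splits as the ``square'' of a single one, which is why the pair $\{\phi_1,\phi_2\}$ may be repeated.
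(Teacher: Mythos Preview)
Your argument is correct. The paper itself gives no proof of this corollary---it simply cites \cite{G} and ends with \qed---so there is nothing to compare against beyond noting that deriving the statement from Theorem~\ref{thm:MQM}, as you do, is precisely the intended route (in \cite{G} the corollary is numbered~1.3 and the theorem~1.2). Your handling of each of the three cases for existence is clean, and the uniqueness argument via $f(a,1) = a^{\phi_1} + a^{\phi_2}$ together with the fact that a group cannot be the union of two proper subgroups is the standard and correct way to finish.
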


In the following lemma, we regard $\FF_9$ as a subring of $End_{\FF_3}(\FF_9)$ via the natural embedding 
$x \mapsto (y \mapsto yx)$. 
\begin{lemma}\label{F9} 
Let $\mathbf{K}$ be a field and $q: \mathbf{K} \to End_{\FF_3}(\FF_9)$ be a multiplicative 
quadratic map such 
that $q(\mathbf{K}^*)$ and $(\FF_9^*)^2$ normalize each other. If $q(\mathbf{K})$ is not contained in 
$\FF_9$, then $\mathbf{K}$ has order $9$ and $q(\mathbf{K})$ generates a field isomorphic to $\FF_9$.
\end{lemma}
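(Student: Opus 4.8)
The plan is to use Corollary~\ref{cor:MQM} to pin down the shape of $q$, and then combine this with the normalization hypothesis and Lemma~\ref{Galois automorphism} to force $\mathbf{K}$ to have order $9$. First I would apply Corollary~\ref{cor:MQM}: since $\FF_9 \subseteq End_{\FF_3}(\FF_9)$ and the latter ring has $\overline{\FF_9} = \overline{\FF_3}$ as a natural target after choosing a splitting field, there is a unique pair $\{\phi_1,\phi_2\}$ of field monomorphisms $\mathbf{K} \to \overline{\FF_3}$ with $q(a) = a^{\phi_1}a^{\phi_2}$ for all $a \in \mathbf{K}$; in particular $\mathbf{K}$ is a finite field, say $\mathbf{K} = \FF_{3^r}$ (if $\mathbf{K}$ had characteristic $\neq 3$ then $q(1+1)=4$ would have to equal $q(2)$, an element of $End_{\FF_3}(\FF_9)$ of additive order $3$, forcing $\chara \mathbf{K} = 3$; and $\mathbf{K}$ embeds in $\overline{\FF_3}$ so it is algebraic over $\FF_3$, hence if it is infinite we get a contradiction with $q(\mathbf{K})$ being a subset of the $9$-element... — more carefully, $q(\mathbf{K}^*)$ consists of commuting units normalizing $(\FF_9^*)^2$, and I would argue $q(\mathbf{K})$ generates a commutative subalgebra $R$ of $Mat_2(\FF_3)$, so $R$ is a field or a product and in any case finite, bounding $|\mathbf{K}|$).

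Next I would analyze the normalization condition. The subgroup $(\FF_9^*)^2$ has order $4$; it is the unique cyclic subgroup of order $4$ inside the normal $Q_8 \leq \mathrm{GL}_2(\FF_3)$ mentioned after Lemma~\ref{Galois automorphism}, and its normalizer in $\mathrm{GL}_2(\FF_3)$ is precisely $\FF_9^* \cdot \la$ the Galois automorphism $\ra$ — more precisely, an element normalizing $(\FF_9^*)^2$ either centralizes $\FF_9$ (lies in $\FF_9^*$) or acts on $\FF_9$ as its Frobenius $\sigma$. So for every $a \in \mathbf{K}^*$, the operator $q(a) = a^{\phi_1}a^{\phi_2}$ lies in $\FF_9^* \cup \FF_9^*\sigma$ inside $End_{\FF_3}(\FF_9)$. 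Since $q(a)$ is an $\FF_3$-linear map that is the product of two field embeddings of $\mathbf{K}$ into $\overline{\FF_3}$, I would argue the image actually lands in $\FF_9 \subseteq End_{\FF_3}(\FF_9)$ (the Frobenius $\sigma$ is not of the form $a^{\phi_1}a^{\phi_2}$ for a field... — this needs the biadditivity/multiplicativity to be used): the key point is that $q$ is additive-plus-biadditive-defect while $\sigma$ composed with multiplication is genuinely $\sigma$-semilinear, and products $a^{\phi_1}a^{\phi_2}$ of two embeddings into $\FF_9$ realize multiplication operators by elements of $\FF_9$, whereas landing in the $\sigma$-coset would force one of $\phi_1,\phi_2$ to be "twisted" in a way incompatible with $q(a)q(b) = q(ab)$ holding as operators. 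Once $q(\mathbf{K}) \subseteq \FF_9$ under the wrong hypothesis, we contradict the assumption "$q(\mathbf{K})$ is not contained in $\FF_9$".

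The remaining work is then to rule out $q(\mathbf{K})$ generating a field strictly larger than, or different from, $\FF_9$ inside $Mat_2(\FF_3)$. Since $Mat_2(\FF_3)$ has maximal subfields of order $9$ only, any field generated by $q(\mathbf{K})$ has order dividing... has order $9$; combined with the previous paragraph, $q(\mathbf{K})$ generates a field $M \cong \FF_9$, and $q \colon \mathbf{K} \to M$ composed with the two projections $M \hookrightarrow \overline{\FF_3}$ recovers $\phi_1,\phi_2$, so $\mathbf{K}$ embeds in $\FF_9$. If $|\mathbf{K}| = 3$ then $q(\mathbf{K}) \subseteq q(\FF_3) = \{0, 1\} \subseteq \FF_9$, contradicting the hypothesis; hence $|\mathbf{K}| = 9$, which is the claim, and $q(\mathbf{K})$ generates $M \cong \FF_9$ as asserted. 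I expect the main obstacle to be the second paragraph: showing rigorously that the normalization condition, together with multiplicativity of $q$ as a map of operators, excludes the Frobenius coset $\FF_9^*\sigma$ from the image — this is exactly where Lemma~\ref{Galois automorphism} should enter, applied to the automorphism of $\FF_9$ induced on the field generated by $q(\mathbf{K})$, since $x^{-1}x^\sigma \in Fix_\sigma(\FF_9) = \FF_3$ fails unless we are in the genuine $\FF_9$-exception, pinning everything down.
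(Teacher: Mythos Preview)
Your proposal has a genuine gap at precisely the point you flag as the ``main obstacle.'' Corollary~\ref{cor:MQM} and Theorem~\ref{thm:MQM} require the target of $q$ to be a commutative field, but $End_{\FF_3}(\FF_9)\cong Mat_2(\FF_3)$ is not one; you cannot invoke them until you already know that $q(\mathbf{K})$ generates a subfield. Now, as you correctly observe, $q(\mathbf{K}^*)$ lies in the normalizer $\FF_9^*\rtimes\langle\sigma\rangle$ of $(\FF_9^*)^2$. Since $q(\mathbf{K}^*)$ is abelian and by hypothesis meets the coset $\FF_9^*\sigma$, there are exactly two possibilities: either $q(\mathbf{K}^*)$ is cyclic generated by some $\varphi\in\FF_9^*\sigma$ of order $8$, or $q(\mathbf{K}^*)=\langle -1,\sigma\rangle\cong V_4$. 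In the first case $1+\varphi$ is invertible and one checks that the subring generated is a field of order $9$, so Theorem~\ref{thm:MQM} applies and finishes. But in the second case $1+\sigma$ has kernel $\FF_3$ and is \emph{not} invertible, so the subring generated by $q(\mathbf{K})$ is not a field and your factorisation $q(a)=a^{\phi_1}a^{\phi_2}$ is unavailable. This case has to be eliminated by a different argument.

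Your suggestion to use Lemma~\ref{Galois automorphism} does not do this: that lemma concerns an automorphism $\sigma$ of a field $\FF$ satisfying $x^{-1}x^\sigma\in\mathrm{Fix}_\sigma(\FF)$ for all squares $x$, and its conclusion is that $\FF=\FF_9$ is the only exception --- but here $\FF_9$ \emph{is} the field in play, so the lemma gives no contradiction. Your finiteness argument for $\mathbf{K}$ is also incorrect: $q$ restricted to $\mathbf{K}^*$ is a group homomorphism with finite image, but this does not bound $|\mathbf{K}|$; indeed in the $V_4$ case the condition $|\mathbf{K}^*:(\mathbf{K}^*)^2|\geq 4$ forces $\mathbf{K}$ to be infinite of characteristic~$3$. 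The paper dispatches this case by a direct additive trick: choose $a,b\in\mathbf{K}^*$ with $a^4\neq b^4$, set $x=(a-b)^2$, $y=(2ab)^2$, $z=(a+b)^2$, observe $x+y=z$ and $q(x)=q(y)=q(z)=1$ (all lie in $(\mathbf{K}^*)^4\subseteq\ker q$), whence the biadditive defect gives $f(x,y)=q(z)-q(x)-q(y)=-1$ and then $q(a^4-b^4)=q(x-y)=q(x)+q(y)+f(x,-y)=3=0$, contradicting $a^4\neq b^4$. This is the missing idea in your sketch.
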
 
\begin{proof}
Let $\sigma$ be the non-trivial Galois automorphism of $\FF_9$ and let $\Gamma =\FF_9^* \rtimes \langle \sigma
 \rangle$, viewed as the subgroup of the unit group of  $End_{\FF_3}(\FF_9)$. Notice that $\Gamma$ coincides with the normaliser of $\FF_9^*$ in that unit group. Therefore, by assumption $q(\mathbf{K}^*)$ is contained in $\Gamma$ but not in $\FF_9^*$. 
 
Let now $x \in \mathbf{K}^*$ be  an element with $q(x)\not\in \FF_9^*$ and $i\in \FF_9^*$ be an element of order $4$. Then 
$-1 = i^2= i^{-1} i^3 = i^{-1} i^{\sigma} =  i^{-1} i^{q(x)} = [i,q(x)] \in q(\mathbf{K}^*)$. Since 
$q(\mathbf{K}^*)$ is abelian, we have two possibilities:
\begin{enumerate}
\item There is an element $\epsilon \in \FF_9^*$ of order $8$ such that
$q(\mathbf{K}^*) = \langle \varphi \rangle$ with $x\varphi = x^3 \epsilon$ for all $x \in \FF_9$. 
\item $q(\mathbf{K}^*) = \langle -1, \sigma \rangle \cong V_4$. 
\end{enumerate} 
In the first case $(1 +\varphi) \in End_{\FF_3}(\FF_9)$ is invertible and has order $8$, so the subring $\mathbf{L}$ generated 
by  $\varphi$ is a field of order $9$ and $q: \mathbf{K} \to \mathbf{L}$ is a multiplicative quadratic map, so 
either $|\mathbf{K}|=9$ or $\mathbf{K} \cong \FF_{81}$ and $q(x) =x^{10}$ by Theorem~\ref{thm:MQM}. But in this case 
$q(\mathbf{K}^*)$ contains  an element of order $8$, a contradiction.

Suppose that the second case holds. Then $|\mathbf{K}^*:(\mathbf{K}^*)^2| \geq 4$, so $\mathbf{K}$ is an infinite 
field of characteristic $3$. Thus there are $a,b \in \mathbf{K}^*$ with $a^4 \ne b^4$.
Set $x = (a-b)^2, y =(2ab)^2 $ and $z =(a+b)^2$. 
Then $x+y =z$ and $q(x) =q(y) =q(z) =1$, so $f(x,y) =q(z) -q(x) -q(y) =-1$ and 
hence $q(a^4 -b^4) = q(x-y) =q(x) +q(y) +f(x,-y) =1+1+1 =0$, a contradiction. 
\end{proof}

\subsection{Twin trees and RGD-systems}\label{sec:RGD}

Twin trees were introduced in \cite{RT94} which we recommend as a reference.

\begin{definition} Let $T_+,T_-$ be two trees such that every vertex has at least $3$ neighbours.
 \begin{enumerate}
 \item A symmetric function $\delta^*:T_+ \times T_- \cup T_- \times T_+ \to \NN$ is called a 
 \textbf{codistance} if for $\epsilon \in \{+,-\}$, $v_{\epsilon}, \in T_{\epsilon}$ and $v_{-\epsilon} \in 
 T_{-\epsilon}$ the following hold:
 \begin{enumerate}
 \item For every vertex $w_{\epsilon} $ adjacent to $v_{\epsilon}$ one has
 $\delta^*(w_{\epsilon},v_{-\epsilon}) = \delta^*(v_{\epsilon},w_{\epsilon}) \pm 1$.
 \item If $\delta^*(v_{\epsilon},v_{-\epsilon}) >0$, then there is a unique vertex $w_{\epsilon}$
 adjacent to $v_{\epsilon}$ with $\delta^*(w_{\epsilon},v_{-\epsilon})=\delta^*(v_{\epsilon},
 v_{-\epsilon}) +1$. 
\end{enumerate}   
\item If $\delta^*$ is a codistance, then $(T_+,T_-,\delta^*)$ is called a \textbf{twin tree}.
 \end{enumerate}
 \end{definition}
 From \cite{RT94} it follows that $T_+$ and $T_-$ are isomorphic, semi-regular trees. Therefore one 
 also talks of a \textbf{twinning} of $T_+$. By \cite{RT99} every semi-regular tree admits 
 uncountably many twinnings.
 \begin{example}
 Let $k$ be a field and $K=k(t)$ the rational function field in one indeterminate over $k$. For 
 $\epsilon =+,-$ there is a unique valuation $v_{\epsilon}$ on $K$ which is trivial restricted to 
  $k$ such that $v_{\epsilon}(t^{\epsilon}) =1$. Let $T_{\epsilon}$ be the Bruhat--Tits tree for 
  $SL_2(K)$ with respect to the valuation $v_{\epsilon}$. As noted in \cite{RT94} there is 
  a codistance $\delta^*: T_+\times T_- \cup T_- \times T_+ \to \NN$ and so $(T_+,T_-,\delta^*)$
  is a twin tree.
 \end{example}
 An \textbf{automorphism} of a twin tree $T=(T_+,T_-,\delta^*)$ is a pair $g =(g_+,g_-) 
 \in \Aut T_+ \times \Aut T_-$ such that $\delta^*(v_+^{g_+},v_-^{g_-}) =
 \delta^*(v_+,v_-)$ for all $(v_+,v_-) \in T_+ \times T_-$. Let $\Aut T$ be the group of all 
 automorphisms of $T$.  One easily sees that $g\in \Aut T$ is 
 determined by $g_+$ (or $g_-$) and therefore regards $\Aut T$ 
 as a subgroup of $\Aut T_+$ (or $\Aut T_-$). In the example above, the 
 automorphism group of $T$ is $\PGL_2(k[t,t^{-1}]) \rtimes {\cal G}$, where 
 ${\cal G}$ is the group of field automorphisms of $K$ which stabilize the two valuations 
 $v_+$ and $v_-$.  
 
A \textbf{twin apartment} is a pair $(\Sigma_+,\Sigma_-)$ such that $\Sigma_{\epsilon}$ is an 
apartment in $T_{\epsilon}$ and such that for every vertex $v_{\epsilon}$ in $\Sigma_{\epsilon}$ 
there is a unique vertex $v_{-\epsilon} $ in $\Sigma_{-\epsilon}$ with 
$\delta^*(v_{\epsilon},v_{-\epsilon})=0$ (and therefore $\delta^*(v_{\epsilon},w_{-\epsilon})$ goes 
to infinity 
as $w_{-\epsilon}$ goes to infinity in one of the two directions of $\Sigma_{-\epsilon}$. 
A \textbf{twin root} is a pair $\alpha=(\alpha_+,\alpha_-)$ such that $\alpha_{\epsilon}$ is a
half-apartment in $T_{\epsilon}$ and such that for $(v_+,v_-) \in \alpha_+ \times \alpha_-$ one 
has $\delta^*(v_+,v_-) =0$ if and only if $v_{\epsilon}$ is the extremal vertex of 
$\alpha_{\epsilon}$ for $\epsilon \in \{+,-\}$. We call 
$(\alpha_+ \setminus \{v_+\}, \alpha_- \setminus \{v_-\})$ the \textbf{interior} of $\alpha$, 
where $v_{\epsilon}$ is the extremal vertex of $\alpha_{\epsilon}$. 
Note that if $w$ is adjacent to $v_{\epsilon}$ for $\epsilon \in \{+,-\}$, then there is a unique 
twin apartment containing $\alpha$ and $w$ (\cite{RT94}). 
Two twin roots $\alpha$ and $\beta$ are called \textbf{opposite} if $(\alpha_+ \cup \beta_+,
\alpha_- \cup \beta_-)$ is a twin apartment. 

For a twin root $\alpha$ one defines $U_{\alpha}$ as the group of those $g\in \Aut T$ which stabilize every 
vertex adjacent to a vertex contained in the interior of $\alpha$. The group $U_{\alpha}$ is called the 
\textbf{root group} corresponding 
to $\alpha$. By \cite{RT94} the group $U_{\alpha}$ acts freely on the set of twin apartments 
containing $\alpha$. We say that $U_{\alpha}$ is a \textbf{full root group} if this action is 
transitive. The twin tree $T$ has got the \textbf{Moufang property} if for every twin root 
$\alpha$ the root group $U_{\alpha}$ is full (or equivalently, if there is a twin apartment 
$\Sigma$ such that for every twin root $\alpha$ contained in $\Sigma$ the root group $U_{\alpha}$ 
is full). From \cite{RT94} it follows that if $T$ is a Moufang twin tree and $\alpha$ and 
$\beta$ are opposite twin roots of $T$, 
then $\la U_{\alpha}, U_{\beta} \ra$ is a rank one group with unipotent subgroups 
$U_{\alpha}$ and $U_{\beta}$. \\   
We repeat the definition of a RGD-system. For simplicity, we only consider those of
type $\tilde{A}_1$, although the concept can be defined for general root systems. 

\begin{definition} Let $G$ be a group, $H \leq G$ and 
$(U_n^{\epsilon})_{n \in \ZZ, \epsilon \in \{+,-\}})$ be a family of subgroups of $G$ with 
$H \leq N_G(U_n^{\epsilon})$ for all $n \in \ZZ$ and $\epsilon\in \{0,1\}$.
 Then $(G, (U_n^{\epsilon})_{n \in \ZZ, \epsilon \in \{+,-\}}, H)$ is a \textbf{RGD-system} of type $\tilde{A}_1$ if the following axioms hold:
\begin{description}
\item[(RGD0)] $U_n^{\epsilon} \ne 1$ for all $n \in \ZZ$ and $\epsilon \in \{+,-\}$.
\item[(RGD1)] $[U_n^{\epsilon},U_m^{\epsilon}] \subseteq U_{n+1}^{\epsilon} \ldots
 U_{m-1}^{\epsilon}$ for $n < m \in \ZZ$ and $\epsilon \in \{+,-\}$.
 \item[(RGD2)] For $\epsilon\in \{+,-\}, n \in \ZZ$ and $1 \ne a \in U_n^{\epsilon}$ there is a 
 unique element $\mu_a \in U_n^{-\epsilon} a U_n^{-\epsilon}$ such that $(U_m^{\delta})^{\mu_a} 
 = U_{2n-m}^{-\delta}$ for all $m \in \ZZ$ and $\delta \in \{+,-\}$.
 \item[(RGD3)] For $\epsilon \in \{+,-\}$ and $n \in \ZZ$ we have
 $U_n^{\epsilon} \cap U_{-\epsilon}=1$, where $U_{-\epsilon} =\la U_m^{-\epsilon} \; | \; m \in \ZZ,
 \epsilon \in \{+,-\}\ra$. 
 \item[(RGD4)] $G$ is generated by $H$ and $\bigcup_{n \in \ZZ, \epsilon \in \{+,-\}} U_n^{\epsilon}$.
\end{description}
\end{definition}
 If $T$ is a Moufang twin tree, $G$ the subgroup of its automorphism group generated by all root 
 groups, $\Sigma$ a twin apartment, $H$ the pointwise stabilizer of $\Sigma$ and 
 $(U_n^{\epsilon})_{n\in \ZZ, \epsilon \in \{+,-\}}$ the family of root groups corresponding to 
 the roots contained in $\Sigma$, then   $(G,H, (U_n^{\epsilon})_{n\in\ZZ, 
 \epsilon \in \{+,-\}})$ is a RGD-system of type $\tilde{A}_1$. On the other hand, every RGD-system
  of type $\tilde{A}_1$ gives rise to a Moufang twin tree of degree 
  $(|U_0^+|+1,|U_1^+|+1)$, see for example \cite{AB}. Therefore, these 
  two concepts are equivalent.
  
  \begin{example}
  In the example above, there is a natural action of $\PSL_2(k[t,t^{-1}])$ on $T=(T_+,T_-)$. 
  There is a twin apartment $\Sigma$ such that the family  $(U_n^{\epsilon})_{n\in \ZZ, 
  \epsilon \in \{+,-\}}$ of root groups associated to the twin roots contained in $\Sigma$ is defined by  
  $$U_n^+ := \left\{ \left( \begin{array}{cc} 1 & 0 \\ at^n & 1 \end{array} \right)Z; a \in 
  k\right\}
  \hspace{1cm} \text{and}   \hspace{1cm}
U_n^-:= \left\{ \left( \begin{array}{cc} 1 & at^{-n} \\ 0 & 1 \end{array} \right)Z; a \in 
  k\right\},$$
 where  $Z= Z(\SL_2(K))$, while
  $$H:=\left\{ \left( \begin{array}{cc} a & 0 \\ 0 & a^{-1} \end{array} \right)Z; a \in k^* \right\}$$ 
  is the pointwise stabilizer of $\Sigma$. The root groups are all full, 
  so $T$ is Moufang.
  \end{example}
\begin{definition} A Moufang twin tree is called a \textbf{commutative $\SL_2$-twin tree} if the following
two conditions hold:
\begin{enumerate}
\item If $\alpha, \beta$ are two opposite twin roots, then $\la U_{\alpha}, U_{\beta} \ra 
\cong \SL_2(F)$ or $\PSL_2(F)$ for a commutative field $F$.
\item If $\alpha, \beta$ are two prenilpotent roots (i.e. $\alpha_{\epsilon} \cap \beta_{\epsilon}$
is a half-apartment for $\epsilon \in \{+,-\}$), then $[U_{\alpha},U_{\beta}] =1$.
\end{enumerate}  
\end{definition}
For the corresponding RGD-system this means that every root group is isomorphic to the additive
group of a field and the commutator appearing in (RGD1) is in fact trivial.
A classification of commutative $\SL_2$-twin trees can be found in \cite{BGM}.    

In \cite{GrMoufangTwinTrees} the second author examined the following question: which commutative $\SL_2$-twin 
trees induce a Moufang set on the boundary of $T_+$ (or on the boundary of $T_-$, which is equivalent).
More precisely, let $T$ be a Moufang twin tree and $\Sigma$ be a twin apartment of $T$. Then 
$\Sigma$ has two ends in $\partial T_+$ which we denote by $\infty$ and $\mathfrak{o}$. 
For $\epsilon \in \{\infty,\mathfrak{o}\}$ we let
$U_{\epsilon}$ denote the closure of the group generated by all root group $U_{\alpha}$ with
$\alpha$ contained in $\Sigma$ and $\partial \alpha = \epsilon$. Note that $U_{\epsilon}$ is abelian: 
The first condition of a commutative $\SL_2$-twin trees implies that
$U_{\alpha}$ is isomorphic to the additive group of a field, the second condition implies that 
$[U_{\alpha},U_{\beta}] =1$ if $\partial \alpha = \partial \beta$. Moreover, the group 
$U_{\epsilon}$ 
acts regularly on $\partial T_+ \setminus \{\epsilon\}$. Set $G:=\la 
U_{\infty},U_{\mathfrak{o}}\ra$. We say that \textbf{$T$ induces a Moufang set at infinity} if 
$G$ is a rank one group with unipotent subgroups $U_{\infty}$ and $U_{\mathfrak{o}}$ or, equivalently,
if $\mathbb{M}(T):=(\partial T_+, (U_{\infty}^g)_{g \in G})$ is a Moufang set.

The main result of \cite{GrMoufangTwinTrees} can be summarized as followed.  

\begin{theorem}\label{ClassificationTheorem}
Let $T$ be a commutative $\SL_2$-twin tree which induces a Moufang set at infinity. Then 
$\mathbb{M}(T)$ is isomorphic to the Moufang set $\mathbb{M}(J)$ associated with one of the quadratic Jordan division algebras $J$ appearing on the following list:
\begin{enumerate}
\item $J$ is the skewfield of skew Laurent series over a field, i.e. 
$$J=K(\!(t)\!)_{\theta}:= \big\{\sum_{i=n}^{\infty} a_i t^i \; | \; n \in \ZZ, a_i \in K \big\},$$ 
where $K$ is a field, 
$\theta \in \Aut K$ and $t$ is an indeterminate over $K$ with $ta =a^{\theta} t$ for all $a \in K$.
\item $J=\mathcal{H}(K(\!(t)\!)_{\theta}, *)$, where $*$ is the involution of $K(\!(t)\!)_{\theta}$ defined
by $t^* =t$ and $a^* = a^{\sigma}$ for an involutory automorphism $\sigma$ of $K$ such that $\sigma \theta \sigma = \theta^{-1}$.
\item $J=\{\sum_{i=n}^{\infty} a_i t^i \; | \; n \in \ZZ, a_{2i} \in E, a_{2i+1} \in F\} \subseteq K(\!(t)\!)$, 
where $K$ is a field of characteristic $2$ and $E$ and $F$ are subfields 
of $K$ containing all squares of 
$K$.
\item $J=\{\sum_{i=n}^{\infty} a_i t^i \; | \; n \in \ZZ, a_{2i} \in E, a_{2i+1} \in F\} 
\subseteq K(\!(t)\!)_{\theta}$, where  $E$ is a subfield of $K$ containing all squares, 
$\theta$ is an involutory automorphism of $K$ and $F$ is the 
fixed field of $\theta$. This Jordan algebra is contained in $\mathcal{H}(K(\!(t)\!)_{\theta},*)$ 
with $*$ as in (b) for $\sigma=1$.
\end{enumerate}
\end{theorem}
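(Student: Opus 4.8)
The plan is to turn the geometric data of the twin tree into algebraic data on the abelian root group $U_\infty$, to recognise $U_\infty$ as the additive group of a complete $\ZZ$-filtered structure built out of one or two commutative fields, and then to let Theorem~\ref{thm:MQM} decide which of the finitely many possibilities occurs. First I would fix a twin apartment $\Sigma$ with ends $\infty,\mathfrak{o}\in\partial T_+$ and pass to the associated RGD-system $(G,(U_n^\epsilon)_{n\in\ZZ,\epsilon\in\{+,-\}},H)$ of type $\tilde A_1$, indexed so that $U_\infty=\overline{\la U_n^+\mid n\in\ZZ\ra}$ and $U_{\mathfrak o}=\overline{\la U_n^-\mid n\in\ZZ\ra}$. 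Since $T$ is a commutative $\SL_2$-twin tree, for each $n$ the opposite twin roots carrying $U_n^+$ and $U_n^-$ give $\la U_n^+,U_n^-\ra\cong\SL_2(F_n)$ or $\PSL_2(F_n)$, so $U_n^+$ is canonically the additive group of a commutative field $F_n$; and prenilpotency forces the $U_n^+$ to commute pairwise, so $U_\infty$ is abelian and the closed subgroups $U_{\geq n}^+:=\overline{\la U_m^+\mid m\geq n\ra}$ form a descending filtration with dense union, trivial intersection, and leading-term quotients $U_{\geq n}^+/U_{\geq n+1}^+\cong(F_n,+)$. Axiom (RGD2) supplies, for a homogeneous $a\in(U_n^+)^\#$, an element $\mu_a$ conjugating $U_m^\delta$ onto $U_{2n-m}^{-\delta}$; composing $\mu_a$ ($a\in(U_0^+)^\#$) with $\mu_b$ ($b\in(U_1^+)^\#$) yields, by conjugation, an automorphism of $U_\infty$ shifting the filtration degree by $2$, whence $F_n\cong F_{n+2}$ and there are only two fields $E:=F_0$ and $F:=F_1$ in play. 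Completeness then equips every element of $U_\infty$ with a convergent ``Laurent expansion'' $\sum_{i\geq n}a_i$ with $a_i$ a lift of an element of $F_i$; this is the skeleton shared by all four conclusions.

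Next I would read off the multiplicative data from the Hua maps. Because $T$ is thick, $\mathbb M(T)=\mathbb M(U_\infty,\tau)$ is proper --- its little projective group acts on $T_+$ without a fixed end and hence contains hyperbolic tree automorphisms, so it is not sharply $2$-transitive --- hence special by Segev's theorem; therefore Lemmas~\ref{mumaps}--\ref{biadditivity} apply, and in particular every $h_{a,b}$ is biadditive with $a\tau h_{a,b}=-2b$. Choosing $\tau=\mu_e$ with $e$ homogeneous of degree $0$ (legitimate, as any element swapping $0$ and $\infty$ recovers the Moufang set), one checks from (RGD2) that $h_a=\tau\mu_a$ fixes $0$ and $\infty$ and is degree-preserving for $a\in(U_0^+)^\#$; moreover $U_0^+$ and $U_1^+$ are root subgroups of $\mathbb M(U_\infty,\tau)$ with Moufang subsets $\mathbb M(E)$ and $\mathbb M(F)$. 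Recording the action on $U_1^+\cong(F,+)$ of the Hua maps $h_a$ for $a\in E\cong U_0^+$ then yields a multiplicative quadratic map $E\to\mathrm{End}_{\FF_p}(F)$ (multiplicativity from Lemma~\ref{mumaps}(ii), biadditivity of the associated bilinear form from Lemma~\ref{biadditivity}, and $q(n)=n^2$ because $\ZZ$ is fixed by the relevant automorphisms); after a reduction showing that its image generates a commutative field $L$ --- the analogue, in the present generality, of the reduction carried out inside the proof of Lemma~\ref{F9} --- one obtains a multiplicative quadratic map $q\colon E\to L$ between commutative fields.

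Now Theorem~\ref{thm:MQM} (equivalently Corollary~\ref{cor:MQM}) splits the analysis of $q$ into three cases. If $q(a)=a^{\phi_1}a^{\phi_2}$, a coordinate change lets us take $\phi_1=\mathrm{id}$ and $\theta:=\phi_2$ an automorphism; reassembling $U_\infty$ with its graded multiplication recovers the skew Laurent series skewfield $K(\!(t)\!)_\theta$ when the coefficient fields are unrestricted --- conclusion~(a) --- or, only in characteristic~$2$, its subalgebra with even-degree coefficients confined to a square-closed subfield $E$ and odd-degree coefficients to the fixed field $F$ of the involutory twist $\theta$ --- conclusion~(d). If $q$ is a quadratic norm form $N_{M/L}$, the involution responsible for it globalises to an involution $*$ of $K(\!(t)\!)_\theta$ with $t^*=t$, and $U_\infty$ is precisely the space of hermitian elements --- conclusion~(b). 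If $K$ and $F$ have characteristic~$2$ and $q$ is a field monomorphism, one lands in the untwisted situation with two square-closed subfields $E,F$ of $K$ --- conclusion~(c). In each case it remains to verify that the reconstructed graded algebra is a quadratic Jordan division algebra and that $\mathbb M(U_\infty,\tau)\cong\mathbb M(J)$: divisibility of $U_\infty^\#$ reflects regularity of the $U_\infty$-action on $\partial T_+\setminus\{\infty\}$, while the Jordan identities are forced by the rank-one relations of Lemma~\ref{basic formulas}.

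The step I expect to be the main obstacle is precisely this last reconstruction: the Hua maps see only the leading-term multiplication on the associated graded, so one must show that the biadditive forms $h_{a,b}$ together with $q$ determine the entire multiplication on the completed object --- i.e.\ that no information is lost in passing to the associated graded, and that the filtration interacts with the Jordan structure as in a (possibly twisted) Laurent series ring --- and then check the quadratic Jordan division algebra axioms. A second delicate point is the characteristic-$2$ analysis, where $q$ being merely a field embedding does not by itself reveal whether a twist $\theta$ is present: one must recover $\theta$ from the finer behaviour of the $\mu$-maps under the degree-$2$ shift and rule out spurious coincidences among small subfields --- exactly the sporadic phenomenon controlled by Lemma~\ref{Galois automorphism} (and, in the locally finite refinement carried out later in the paper, by Lemma~\ref{F9} and the special role of $\FF_9$).
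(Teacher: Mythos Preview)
The paper does not contain a proof of this theorem. Theorem~\ref{ClassificationTheorem} is quoted from the companion preprint \cite{GrMoufangTwinTrees} (``The main result of \cite{GrMoufangTwinTrees} can be summarized as followed''), and is used here as a black box in the proof of Theorem~\ref{thm:main}. There is therefore no proof in the paper against which to compare your proposal.

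That said, your outline is broadly the right shape, and the ingredients you single out --- the RGD filtration giving a graded abelian group with two alternating coefficient fields, specialness via Segev, and a multiplicative quadratic map extracted from the Hua action on an adjacent root group and then classified by Theorem~\ref{thm:MQM} --- are indeed central to the argument in \cite{GrMoufangTwinTrees} (and to the companion classification \cite{BGM}). Your own diagnosis of the hard step is accurate: the genuinely delicate part is not the leading-term analysis but the \emph{reconstruction} --- showing that the graded data determine the full filtered multiplication with no hidden deformation, and that the resulting object is precisely one of the four listed quadratic Jordan division algebras. Your sketch does not yet indicate how to carry this out (for instance, why the twist $\theta$ is forced to be a single automorphism rather than a more general cocycle, or why in characteristic~$2$ the subfields $E,F$ are exactly those containing all squares). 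A smaller gap: your argument for properness (``contains hyperbolic tree automorphisms, so it is not sharply $2$-transitive'') is not quite an argument --- sharply $2$-transitive groups can certainly contain elements of infinite order; the correct reason, as in the paper's remark before Lemma~\ref{Hua maps}, is that the Hua group $\HH$ contains a hyperbolic element and is therefore nontrivial.
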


\begin{remark}\label{rem:Classif}
\renewcommand{\labelenumi}{(\arabic{enumi})} 
\begin{enumerate}
\item If in case (i) or (ii) the order of $\theta$ is finite, then the little projective group $G^{\dagger}$ 
of the Moufang set is an algebraic group. The groups in (i) are forms of $A_n$ and the groups in (ii) are forms of $C_2$ (see the description in the introduction above for more details). If the order
of $\theta$ is infinite, then $G^{\dagger}$ is a classical  group but not an algebraic group. The cases (iii) and (iv) lead to groups 
of mixed type. They do not occur when the field $K$ is finite.

\item If $T$ is locally finite, then the field $K$ must be finite, so either
case (i) or (ii) holds. In case (i) the automorphism $\theta$ must have finite order $n$, 
so the center of $K(\!(t)\!)_{\theta}$ is the local field $F(\!(t^n)\!)$, where $F = K^\theta$ the fixed field of $\theta$. Hence $K(\!(t)\!)_{\theta}$ 
is a division algebra of degree $n$ over its center. Since $\Aut K$ is cyclic for a finite field 
$K$, we must have $\theta^2=1$ and $\sigma \in \{1,\theta\}$ in case (ii). Hence if $\theta\ne 1$, 
then $K(\!(t)\!)_{\theta}$ is a quaternion algebra. If $\chara K \ne 2$, then $*$ is a non-standard 
involution.

\item We remark that if $J$ is any finite-dimensional quadratic Jordan division algebra over 
a local function field $K(\!(t)\!)$, 
then the Moufang set $\mathbb{M}(J)$ appears in our list. By \cite{McZ} a quadratic
Jordan division algebra is a division algebra, a Hermitian algebra, a Jordan algebra of Clifford 
type or an Albert algebra. There are no Albert division algebras over $K(\!(t)\!)$, and since 
every quadratic form in more than four indeterminates is isotropic over $K(\!(t)\!)$, every quadratic 
Jordan division algebra of Clifford type is a subalgebra of a quaternion division algebra. 
Since the Brauer group $Br(K(\!(t)\!)) $ is isomorphic to $ \QQ/\ZZ$ (see \cite{Serre}, XIII, Prop. 6), for every natural number $n\geq 1$ there are exactly 
$\varphi(n)$ pairwise non-isomorphic division algebras of degree $n$ with center $K(\!(t)\!)$.
These are the algebras $L(\!(u)\!)_{\theta}$ with $[L:K] =n$, $u^n=t$ and $Gal(L|K) =\la \theta \ra$.
Since $L(\!(u)\!)_{\theta} $ has an involution if and only if $\theta^2=1$, our list is exhaustive. 
\end{enumerate} 
\end{remark}

\renewcommand{\labelenumi}{(\roman{enumi})} 
\section{Boundary Moufang trees}

\subsection{The setup}

Let $T$ be a thick tree, i.e. a simplicial tree all of whose vertices have degree~$\geq 3$. For each vertex $x \in V(T)$, we denote by $x^\perp$ the set of vertices adjacent to, but different from, $x$. The set of ends of $T$ is denoted by $\partial T$. We let $(U_{\e})_{\e \in \partial T}$ be a collection of subgroups of $\Aut(T)$  such that $(\partial T,(U_{\e})_{\e \in \partial T})$ is a Moufang set. The Moufang condition implies that \textit{the root groups $U_{\e}$ all consist of elliptic automorphisms of $T$}, since a hyperbolic element in  $U_{\e}$ would be a non-trivial element fixing an end of $T$ different from $\e$, thus violating the condition that the $U_{\e}$-action on $\partial T \setminus \{\e\}$ is free. The little projective group of this Moufang set is denoted by $G$. By construction $G$ is a subgroup of $\Aut(T)$.  

Throughout, we fix  an ordered apartment $\Sigma =(x_n)_{n \in \ZZ}$  in $T$. The end  of $\Sigma$ represented by 
 the half-apartment $\{x_n \; | \; n \leq 0\}$ (resp.  $\{x_n \; | \; n \geq 0\}$) is denoted by $\infty$ (resp. $\mathfrak{o}$). 
   
Let $(U,+)$ be a group and  $\alpha: U \to U_{\infty}$ be an isomorphism. For each $n \in \ZZ$, we set $U_n=\alpha^{-1} (U_{\infty,x_n})$, where $U_{\infty,x_n}$ denotes the stabiliser of $x_n$ in the root group $U_\infty$. We have $U_{n+1} \leq U_n$ for all $n$. 
%We further set $U_n^* = U_n \setminus U_{n+1}$ for each $n$, and $U^\# =U \setminus \{0\}$.  

We remark that, for any  $e \in U^\#$, the given Moufang set $(\partial T, (U_{\e})_{\e \in \partial T})$  
 is isomorphic to $\mathbb  M(U,\tau)$, where  $\tau =\mu_e$. Given  $a \in U^{\#}$, we let 
 $$\beta_a=\alpha_{a\tau^{-1}}^{\tau}$$ 
 be the unique element in $U_{\mathfrak{o}}$  with $(\mathfrak{o})^{\alpha_a} = \infty^{\beta_a}$.

\begin{lemma} \label{lem:reflection}
Let $n,m \in \ZZ$. If $a \in U_n \setminus U_{n+1}$ and $x \in U_m \setminus U_{m+1}$, 
then $\mu_a$ maps $x_m$ to $x_{2n-m}$ and $x\mu_a \in U_{2n-m}  \setminus U_{2n-m+1}$.  
\end{lemma}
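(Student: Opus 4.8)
The plan is to prove the sharper statement that $\mu_a$ stabilises the apartment $\Sigma$ and acts on it as the reflection fixing the vertex $x_n$; granting this, the first assertion is immediate and the second follows because combinatorial projection onto $\Sigma$ is equivariant under automorphisms that stabilise $\Sigma$.

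The first ingredient is a dictionary between the filtration $(U_k)_{k\in\ZZ}$ and the geometry of $\Sigma$. For $z\in U$, the element $\alpha(z)$ fixes $x_k$ if and only if $z\in U_k$, and (since $\alpha(z)$ fixes $\infty$) fixing $x_k$ forces fixing $x_j$ for all $j\le k$; as $U_\infty$ acts freely on $\partial T\setminus\{\infty\}$, no nontrivial element fixes all of $\Sigma$, so for $z\ne 0$ there is a largest index $v(z)$ with $\alpha(z)$ fixing $x_{v(z)}$, and $z\in U_k\setminus U_{k+1}$ precisely when $v(z)=k$. If $v(z)=k$, then $\alpha(z)$ moves $x_{k+1}$ to a neighbour of $x_k$ which is neither $x_{k+1}$ (by the choice of $k$) nor $x_{k-1}$ (since $\alpha(z)$ fixes $x_{k-1}$), hence off $\Sigma$; so $\alpha(z)(\Sigma)$ coincides with $\Sigma$ exactly on $\{x_j:j\le k\}$ and branches at $x_k$, which means exactly that the projection onto $\Sigma$ of the end $\alpha(z)(\mathfrak o)$ equals $x_k$. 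Thus $z\in U_k\setminus U_{k+1}$ if and only if the point of $\partial T$ corresponding to $z$ projects onto $\Sigma$ to $x_k$. The same analysis on the other side---now using the hypothesis recalled in the setup that the root groups consist of elliptic automorphisms---shows that a nontrivial $h\in U_{\mathfrak o}$ fixes $x_j$ for all large $j$, and that, with $N_h$ the least such index, the end $h(\infty)$ projects onto $\Sigma$ to $x_{N_h}$; in particular $h$ fixes the projection of $h(\infty)$.

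The crux is to show $\mu_a(x_n)=x_n$. Since $\mu_a$ lies in the double coset $U_{\mathfrak o}\,\alpha(a)\,U_{\mathfrak o}$, write $\mu_a=p\,\alpha(a)\,q$ with $p,q\in U_{\mathfrak o}$. Using that $\mu_a$ interchanges $\mathfrak o$ and $\infty$, that $p$ and $q$ fix $\mathfrak o$, and that $\alpha(a)$ fixes $\infty$, a short computation shows that, arranged suitably, $p$ and $q$ carry $\infty$ to the ends $\alpha(a)(\mathfrak o)$ and $\alpha(a)^{-1}(\mathfrak o)=\alpha(-a)(\mathfrak o)$ respectively. Now $\alpha(a)$ and $\alpha(-a)$ have the same fixed point set, hence both have level $n$, so by the dictionary these two ends project onto $\Sigma$ to $x_n$; therefore $p$ and $q$ each fix $x_n$. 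As $\alpha(a)$ fixes $x_n$ too (because $a\in U_n$), we get $\mu_a(x_n)=x_n$. Finally, $\mu_a$ permutes the two ends $\infty,\mathfrak o$ of $\Sigma$, so it stabilises $\Sigma$ and acts there as an orientation-reversing isometry $x_j\mapsto x_{c-j}$ for some $c\in\ZZ$; the fixed vertex forces $c=2n$, so $\mu_a$ maps every $x_j$ to $x_{2n-j}$, which is the first assertion. For the second, $x\mu_a$ is by definition the element of $U$ whose corresponding point is the $\mu_a$-image of the point corresponding to $x$; the latter projects onto $\Sigma$ to $\mu_a(x_m)=x_{2n-m}$ by equivariance of the projection, so the dictionary yields $x\mu_a\in U_{2n-m}\setminus U_{2n-m+1}$.

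I expect the crux to be the genuine obstacle: everything hinges on controlling exactly how far the two $U_{\mathfrak o}$-factors of $\mu_a$ displace the end $\infty$ relative to $\Sigma$, and it is precisely the ellipticity of the root groups---which forbids any horocyclic drift---that makes those displacements cancel, so that the reflection centre comes out to be $x_n$ rather than some other vertex.
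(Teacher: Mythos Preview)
Your proof is correct and follows essentially the same line as the paper's: both decompose $\mu_a$ into its $U_{\mathfrak o}\,\alpha_a\,U_{\mathfrak o}$ factors and show each factor fixes $x_n$, the paper phrasing your projection dictionary as ``$x_n$ is the unique common vertex of the three geodesics $(\infty,\mathfrak o)$, $(\infty,\mathfrak o^{\alpha_a})$, $(\mathfrak o,\infty^{\beta_a})$''. For the second assertion the paper conjugates $\beta_x\in U_{\mathfrak o,x_m}\setminus U_{\mathfrak o,x_{m-1}}$ by $\mu_a$ to obtain $\alpha_{x\mu_a}$, which is your equivariance-of-projection argument in slightly different clothing.
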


\begin{proof}
Let $\beta_a$ be the unique element of $U_{\mathfrak{o} }$ with $({\mathfrak{o}})^{\alpha_a}={\infty}^{\beta_a}$. Then 
$x_n$ is fixed by $\beta_a$ since $x_n$ is the unique common vertex common to the three lines  $(\infty, \mathfrak{o}), (\infty, \mathfrak{o}^{\alpha_a})$ 
and $(\mathfrak{o}, \infty^{\beta_a})$. Thus $x_n$ is fixed by $\mu_a =\beta_{a^{-1}} \alpha_a \beta_a$. Since
$\mu_a$ interchanges $\infty$ and $\mathfrak{o}$, it follows that $\mu_a$ acts on $\Sigma=(\mathfrak{o}, \infty)$ as the reflection through $x_n$. Therefore $\mu_a$ maps $x_m$ to $x_{2n-m}$. By the same argument $\beta_{x}$ fixes $x_m$. Since
$\beta_{x} = \alpha_{x\mu_a}^{\mu_a^{-1}}$ we have 
$$\alpha_{x\mu_a} =\beta_x^{\mu_a} \in 
(U_{\mathfrak{o},x_m} \setminus U_{\mathfrak{o},x_{m-1}})^{\mu_a} =U_{\infty,x_{2n-m}} \setminus U_{\infty,x_{2n-m+1}}$$
so that $x\mu_a \in U_{2n-m}  \setminus U_{2n-m+1}$ as desired.
\end{proof}  

Let $\HH$ be the Hua group of $\mathbb  M(U,\tau)$, so $\HH = G_{\mathfrak{o}, \infty}$. Moreover, we set 
$$\HH_n=\langle \mu_a \mu_b \; | \; a,b \in U_n \setminus U_{n+1}\rangle.$$
%\hspace{1cm} \text{and} \hspace{1cm}
%\HH_n^{(i)}=\HH_n  \cap \HH_{x_n}^{(i)}.$$ 
By Lemma~\ref{lem:reflection}, the group $\HH_n$ fixes $x_n$ and 
thus acts trivially on $\Sigma$. 

\begin{lemma}\label{normalization}
For all $m,n \in \ZZ$, the subgroups $\HH_m$ and $\HH_n$ normalize each other.
\end{lemma}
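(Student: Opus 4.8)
The plan is to reduce the claim to the single assertion that for every generator $\mu_a\mu_b$ of $\HH_m$ (with $a,b \in U_m\setminus U_{m+1}$) and every generator $\mu_c\mu_d$ of $\HH_n$ (with $c,d\in U_n\setminus U_{n+1}$), the conjugate $(\mu_c\mu_d)^{\mu_a\mu_b}$ again lies in $\HH_n$; by symmetry of the two indices this suffices. First I would record the conjugation formula for $\mu$-maps under elements of the Hua group, namely Lemma~\ref{mumaps}(ii), which gives $\mu_x^h = \mu_{xh}$ for $h\in\HH$. Applying this with $h = \mu_a\mu_b \in \HH_m \leq \HH$ yields
\begin{equation*}
(\mu_c\mu_d)^{\mu_a\mu_b} = \mu_c^{\mu_a\mu_b}\,\mu_d^{\mu_a\mu_b} = \mu_{c(\mu_a\mu_b)}\,\mu_{d(\mu_a\mu_b)}.
\end{equation*}
So everything comes down to controlling which subgroup $U_j\setminus U_{j+1}$ the elements $c(\mu_a\mu_b)$ and $d(\mu_a\mu_b)$ belong to.

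This is exactly where Lemma~\ref{lem:reflection} does the work. Since $a\in U_m\setminus U_{m+1}$ and $c\in U_n\setminus U_{n+1}$, that lemma gives $c\mu_a\in U_{2m-n}\setminus U_{2m-n+1}$; applying it again, now with $b\in U_m\setminus U_{m+1}$ and $c\mu_a\in U_{2m-n}\setminus U_{2m-n+1}$, we get $c(\mu_a\mu_b) = (c\mu_a)\mu_b \in U_{2m-(2m-n)}\setminus U_{2m-(2m-n)+1} = U_n\setminus U_{n+1}$. The same computation applies verbatim to $d$, so $d(\mu_a\mu_b)\in U_n\setminus U_{n+1}$ as well. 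Hence both $\mu_{c(\mu_a\mu_b)}$ and $\mu_{d(\mu_a\mu_b)}$ are among the defining generators of $\HH_n$, so their product lies in $\HH_n$, which is what we needed. Running the symmetric argument (conjugating a generator of $\HH_m$ by a generator of $\HH_n$) shows $\HH_n$ normalizes $\HH_m$, completing the proof.

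I do not anticipate a serious obstacle here: the statement is essentially a bookkeeping consequence of the two lemmas already established. The only point requiring a little care is making sure the ``level arithmetic'' $2m-(2m-n)=n$ is applied with the hypotheses of Lemma~\ref{lem:reflection} in the correct roles each time (the element being moved versus the element whose $\mu$-map does the moving), and noting that the double application is legitimate because $c\mu_a$ indeed lands in a single difference set $U_j\setminus U_{j+1}$, so Lemma~\ref{lem:reflection} can be invoked a second time. Since $\HH_m$ and $\HH_n$ are generated by such elements, normality of each in the subgroup they jointly generate follows, and in particular they normalize one another.
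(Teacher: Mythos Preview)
Your proposal is correct and is essentially the same argument as the paper's, just phrased more computationally. The paper observes that $\HH_n$ acts trivially on the apartment $\Sigma$ (hence fixes $x_m$) and normalizes $U_\infty$, so it preserves $U_m\setminus U_{m+1}$ and therefore the generating set $\{\mu_a : a\in U_m\setminus U_{m+1}\}$; you recover the same fact by applying Lemma~\ref{lem:reflection} twice so that the level arithmetic $2m-(2m-n)=n$ cancels, and in both cases one then uses $\mu_a^h=\mu_{ah}$ from Lemma~\ref{mumaps}(ii).
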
 

\begin{proof}
We have observed that $\HH_n$ acts trivially on $\Sigma$, and thus fixes $x_m$. Since $\HH_n \leq \HH$ normalizes $U_{\mathfrak{o}}$ and 
$U_{\infty}$, it follows that $\HH_n$ normalizes the set $\{\mu_a \; | \; a \in U_m \setminus U_{m+1}\}$. Therefore
$\HH_n$ normalizes $\HH_m$.
\end{proof}

\subsection{The local Moufang sets}

An important point to analyze is when the set of neighbours $x^\perp$ of a vertex $x \in V(T)$ carries the structure of a Moufang set that is invariant under the action of the stabiliser $G_x$. Throughout this subsection, we assume that 
$$U_{n+1} \trianglelefteq U_n \hspace{1cm} \text{for all } n \in \ZZ.$$
We first show that this condition is indeed sufficient to ensure the existence of canonical Moufang sets localised at each vertex.  

\begin{lemma}\label{local Moufang sets} 
%Assume that  $U_{n+1} \trianglelefteq U_n$ for all $n \in \ZZ$. 
Let $x$ be a vertex of $T$. 
\begin{enumerate}
\item For any $\e \in \partial T$ and $z \in [x, \e)$ different from $x$, the group $U_{\e, x}$ acts trivially on $z^\perp$. 

\item 
Given a vertex $y \in x^\perp$ and  two ends $\e,\f \in \partial T$ such that $y \in [x,\e) \cap [x,\f)$, then $U_{\e,x}$ and $U_{\f,x}$ induce the same permutation groups on the set $x^{\perp}$.  This subgroup of $\Sym(x^\perp)$ is denoted by $U_y$.

\item 
The pair $(x^{\perp}, (U_y)_{y\in x^{\perp}})$ is a Moufang set, called the \textbf{local Moufang 
set} at $x$.
 \end{enumerate}
 \end{lemma}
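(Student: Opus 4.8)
The plan is to verify the three Moufang-set axioms for $(x^\perp, (U_y)_{y \in x^\perp})$, after first establishing that the subgroups $U_y$ are well defined. For (i), fix $\e \in \partial T$ and $z \in [x,\e)$ with $z \neq x$. Any $g \in U_{\e,x}$ fixes $x$, fixes $\e$, hence fixes the whole geodesic ray $[x,\e)$, in particular $z$ and its neighbour $z'$ on $[z,\e)$. Since $g$ is elliptic (all root-group elements are elliptic, as observed in the setup) and fixes the two adjacent vertices $z, z'$, it fixes every vertex at distance $\le 1$ from $z$ lying on the $\e$-side; to get that it fixes \emph{all} of $z^\perp$ one uses the standing hypothesis $U_{n+1} \trianglelefteq U_n$ together with the regular action: I would transport the situation into the fixed apartment $\Sigma$ and argue that $U_{\e,x}$, being normal in the appropriate filtration piece, cannot move a neighbour of $z$ without moving a vertex it is required to fix. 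Concretely, if $g$ moved some $w \in z^\perp$, then $g$ would not lie in the stabiliser of $w$, contradicting that the pointwise stabiliser of $[z,\e)$ together with the normality hypothesis forces $g$ to act trivially on the sphere around $z$ on the far side from $\e$.

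For (ii), the key observation is that if $y \in x^\perp$ lies on both $[x,\e)$ and $[x,\f)$, then $\e$ and $\f$ are in the same ``half-tree'' hanging off the edge $\{x,y\}$ at $y$. I would choose a common vertex $z$ far out along $[y,\e) \cap [y,\f)$ (or handle the case $\e = \f$ trivially); then by part (i) applied at $z$ — or rather by a direct comparison — the actions of $U_{\e,x}$ and $U_{\f,x}$ on $x^\perp$ agree. The cleanest route is: both $U_{\e,x}$ and $U_{\f,x}$ are determined by their action on the single neighbour set $x^\perp$, and the conjugation relation among root groups (part of the Moufang-set axioms) combined with part (i) shows that an element of $U_{\e,x}$ and a suitably matched element of $U_{\f,x}$ induce the same permutation of $x^\perp$; since $U_{\e,x}$ acts on $x^\perp \setminus \{y\}$ and this action is pinned down by where one point goes (regularity of $U_\e$ on $\partial T \setminus \{\e\}$ descends), equality of the two permutation groups follows. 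This makes $U_y := $ image of $U_{\e,x}$ in $\Sym(x^\perp)$ well defined, independent of the choice of $\e$ with $y \in [x,\e)$.

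For (iii), I must check: (a) $U_y$ fixes $y$ and acts regularly on $x^\perp \setminus \{y\}$; (b) $\{U_y : y \in x^\perp\}$ is permuted under conjugation by each $U_{y'}$. Claim (a): $U_y$ fixes $y$ since every element of $U_{\e,x}$ fixes $x$ and fixes the ray $[x,\e) \ni y$; for regularity on $x^\perp \setminus \{y\}$, note that the $U_\e$-action on $\partial T \setminus \{\e\}$ is regular, and distinct neighbours $w \neq y$ in $x^\perp$ correspond to distinct ``cones'' of ends, so the induced action on $x^\perp \setminus \{y\}$ is transitive; freeness follows because an element fixing $x$ and some $w \in x^\perp \setminus \{y\}$ fixes two ends spanning a geodesic through $x$, hence (being elliptic and in a root group) is trivial on $x^\perp$ — here part (i) is again used to kill the kernel. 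Claim (b): for $y, y' \in x^\perp$, pick $\e$ with $y \in [x,\e)$ and $g \in U_{y'}$ lifting to $\tilde g \in U_{\f',x}$; then $\tilde g$ maps $U_\e$ to $U_{\e \tilde g}$ by the Moufang-set conjugation axiom, $\tilde g$ fixes $x$, and $y\tilde g \in x^\perp$ lies on $[x, \e\tilde g)$, so $(U_y)^g$ is the image of $U_{\e\tilde g, x}$, i.e.\ equals $U_{y\tilde g}$; thus conjugation by $U_{y'}$ permutes the $U_y$.

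The main obstacle I anticipate is part (i) — more precisely, pinning down exactly how the normality hypothesis $U_{n+1} \trianglelefteq U_n$ forces $U_{\e,x}$ to act trivially on the \emph{entire} sphere $z^\perp$ rather than merely fixing the one further vertex on $[x,\e)$. One has to rule out that an element fixing the ray nonetheless permutes the branches hanging off $z$ on the side away from $\e$; without the normality assumption this genuinely can fail (the filtration $(U_n)$ need not behave well), so the argument must invoke it essentially, presumably by conjugating an element of $U_{\e,x}$ by a $\mu$-element or by an element of $U_\e$ moving $x$ closer to $z$, and using Lemma~\ref{lem:reflection} to control the resulting filtration jumps. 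Once (i) is secured, parts (ii) and (iii) are fairly formal consequences of the definitions together with the regularity of the global root-group actions.
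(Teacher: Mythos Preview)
Your proposal correctly identifies part~(i) as the crux and recognises that the normality hypothesis $U_{n+1} \trianglelefteq U_n$ is essential, but you do not actually supply the mechanism. The paper's proof is short and direct: reduce to the case where $z$ is adjacent to $x$; then $U_{\e,x} \trianglelefteq U_{\e,z}$ by the normality hypothesis, and $U_{\e,z}$ acts transitively on $z^\perp \setminus \{z'\}$ (where $z'$ is the neighbour of $z$ on $[z,\e)$), since $U_{\e,z}$ consists of elliptic elements and acts transitively on $\partial T \setminus \{\e\}$. Now $U_{\e,x}$ is a \emph{normal} subgroup fixing the point $x \in z^\perp \setminus \{z'\}$, hence fixes the entire $U_{\e,z}$-orbit, i.e.\ all of $z^\perp$. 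The one-line principle ``a normal subgroup fixing a point in a transitive action fixes every point of the orbit'' is all that is needed; your suggestions of transporting into $\Sigma$, conjugating by a $\mu$-element, or invoking Lemma~\ref{lem:reflection} are detours that never reach this point.

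For part~(ii), your first suggestion---choosing a common vertex far out along $[y,\e) \cap [y,\f)$---does not work in general: once $\e \neq \f$, that intersection may reduce to $\{y\}$ itself. The paper instead picks an end $\e'$ with $x \in [y,\e')$ (on the \emph{opposite} side of the edge $\{x,y\}$), takes $g \in U_{\e'}$ with $U_\e^g = U_\f$, observes that $g$ fixes $y$ (since $y$ lies on both geodesics $(\e',\e)$ and $(\e',\f)$ and $g$ is elliptic), and then applies part~(i) at the vertex $y$ to conclude that $g$ acts trivially on $x^\perp$; hence $U_{\e,x}$ and $U_{\f,x} = U_{\e,x}^g$ have the same image in $\Sym(x^\perp)$. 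Your second route (``conjugation relation \dots combined with part~(i)'') gestures toward this but never specifies which conjugating element to use or why it is trivial on $x^\perp$.

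Your treatment of part~(iii) is essentially along the right lines and close to the paper's. One correction: in your freeness argument, fixing $x$ and a neighbour $w \in x^\perp \setminus \{y\}$ does \emph{not} give you two fixed ends. The clean argument is that $g \in U_{\e,x}$ fixing such a $w$ lies in $U_{\e,w}$ (since $[w,\e)$ passes through $x$), and then part~(i) applied with $w$ in place of $x$ and $x$ in place of $z$ forces $g$ to act trivially on $x^\perp$.
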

 \begin{proof}
 (i) Since $U_{\e,x} \leq U_{\e,z}$ for all $z \in [x,\e)$, it suffices to prove the statement in the 
 case where $z$ is adjacent to $x$. The hypothesis that $U_{n+1} \trianglelefteq U_n$ for all $n \in 
 \ZZ$ implies that $U_{\e, x} \trianglelefteq U_{\e, z}$. Since $U_{\e, z}$ consists of elliptic 
 elements and acts transitively on $\partial T \setminus \{\e\}$, we infer that $U_{\e, z}$ is 
 transitive on the vertices in $z^\perp$ not belonging to the ray $[z, \e)$. This implies that 
 $U_{\e, x} \trianglelefteq U_{\e, z}$ acts trivially on $z^\perp$. 

\medskip \noindent  
 (ii) Let $\e^{\prime}$ be an end with $x \in [y,\e^{\prime})$. Then there is $g \in
  U_{\e^{\prime}}$ such that $U_{\e}^g =U_{\f}$. Since $g$ is elliptic and since $y \in (\e^{\prime},\e) \cap 
  (\e^{\prime},\f)$, it follows that   $g$ fixes $y$. From (a), it follows that  $g$ induces the identity on $x^{\perp}$,  so that $U_{\e,x}$ and $U_{\f,x} =U_{\e,x}^g $ indeed induce the same permutation groups on the set $x^{\perp}$.
  
\medskip \noindent 
  (iii) Let $y,z \in x^{\perp}$ be distinct and let $\e,\f$ be ends with $y\in [x,\e)$ and $z \in [x,\f)$. 
  The image of $U_{\e,x}$ in $\Sym(x^{\perp})$ is $U_y$, while the image of $U_{\f,x}$ is 
  $U_z$. Let $a \in U_{\e,x} \setminus U_{\e,z}$. Then there is $b\in U_{\f}$ with 
  $U_{\e}^b =U_{\f}^a$ and thus $\e^b =\f^a$. Since $b$ is elliptic and since $x \in (\e,\f) \cap 
  (\e^b,\f)$, we conclude again that $b$ fixes $x$. Thus $U_{\f,x}^a =(U_{\f} \cap G_x)^a = U_{\f}^a 
  \cap G_x^a =
  U_{\e}^b \cap G_x^b = (U_{\e} \cap G_x)^b = U_{\e,x}^b$ and so $U_y^b =U_z^a$.   
 \end{proof}

 \begin{lemma}\label{tau} 
%Assume that  $U_{n+1} \trianglelefteq U_n$ for all $n \in \ZZ$. 
Let $k \in \ZZ$, let $e \in U_k \setminus U_{k+1}$, and set $\tau=\mu_e$. Let also $m > n \in \ZZ$. Then for any  $a \in U_{n} \setminus U_{n+1}$ and  $b \in U_m$, there exists  $d\in U_{2k-2n+m+1}$ such that $(b+a)\tau =d + b\mu_a \tau + a \tau$.
 \end{lemma}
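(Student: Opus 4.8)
The identity to establish compares the two elements $(b+a)\tau$ and $b\mu_a\tau + a\tau$ of $U$, and asserts that their difference lies in the deep subgroup $U_{2k-2n+m+1}$. Since $U$ is written additively and (by the standing hypotheses of this subsection) $U_{j+1}\trianglelefteq U_j$, it suffices to track, via Lemma~\ref{lem:reflection}, the level in the filtration $(U_j)$ at which each term sits, and to feed the known additive/multiplicative identities from Lemma~\ref{basic formulas} into this bookkeeping. The plan is to start from Lemma~\ref{basic formulas}(i), which gives
\[
(a\tau^{-1}-b\tau^{-1})\tau = (a-b)\mu_b + (-b\tau^{-1})\tau,
\]
and to specialize it after the substitutions that turn $a\tau^{-1},b\tau^{-1}$ into the elements $a,b$ appearing in the statement, i.e. apply it with $a$ replaced by $(b+a)\tau$ and $b$ replaced by $b\tau$ (so that $a\tau^{-1}$ becomes $b+a$ and $b\tau^{-1}$ becomes $b$). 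This yields an expression for $(b+a-b)\tau = a\tau$? — no: more carefully, I would apply (i) in the form that produces $\bigl((b+a)-b\bigr)\mu_b = a\mu_b$ on the right, rearrange to isolate $(b+a)\tau$, and compare with $b\mu_a\tau$.

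First I would set up notation: write $a\in U_n\setminus U_{n+1}$, $b\in U_m$ with $m>n$, and $e\in U_k\setminus U_{k+1}$, so $\tau=\mu_e$. Two sub-cases arise according to whether $b=0$ (trivial, both sides equal $a\tau$ and $d=0$ works) or $b\in U_\ell\setminus U_{\ell+1}$ for some $\ell\ge m$. In the nontrivial case, I would use Lemma~\ref{lem:reflection} repeatedly: $\mu_e$ sends level $j$ to level $2k-j$; $\mu_a$ sends level $j$ to level $2n-j$. Thus $a\tau=a\mu_e\in U_{2k-n}\setminus U_{2k-n+1}$, and $b\mu_a\in U_{2n-\ell}$, hence $b\mu_a\tau\in U_{2k-2n+\ell}$ — which is contained in $U_{2k-2n+m}$ but generally not deeper. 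The quantity $(b+a)\tau$: since $b+a\in U_n\setminus U_{n+1}$ (as $b\in U_m\subseteq U_{n+1}$ and $a\notin U_{n+1}$), we get $(b+a)\tau\in U_{2k-n}\setminus U_{2k-n+1}$, matching the leading level of $a\tau$. So both $(b+a)\tau$ and $b\mu_a\tau+a\tau$ live in $U_{2k-n}$, and the content of the lemma is that they agree \emph{modulo} $U_{2k-2n+m+1}$.

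The key computational step is to run Lemma~\ref{basic formulas}(i) with the right substitution and then reinterpret each term's level. Concretely, applying (i) with the roles $a\rightsquigarrow (b+a)\tau$, $b\rightsquigarrow b\tau$ gives
\[
\bigl((b+a)-b\bigr)\tau = \bigl((b+a)\tau^{-1\cdot}\!\!-\! b\bigr)\text{-type expression};
\]
rather than guess the exact bracketing, I would instead start from $(b+a) = (b+a)$, note $\bigl((b+a)\tau^{-1}\bigr) = b+a$ is false unless $\tau$ is an involution — but here $\mathbb M(U,\tau)=\mathbb M(U,\tau^{-1})$ holds because the Moufang set is special with $U$ abelian (Lemma~\ref{mua=mub}(ii) gives $\mu_a=\mu_{-a}$, hence Lemma~\ref{mumaps}(iii) applies), so one may replace $\tau$ by $\tau^{-1}$ freely. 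Using this, set in Lemma~\ref{basic formulas}(i) the elements $a' = (b+a)\tau$ and $b'=b\tau$: then $a'\tau^{-1}=b+a$, $b'\tau^{-1}=b$, $a'-b' = (b+a)\tau - b\tau$, and (i) reads
\[
\bigl((b+a)-b\bigr)\tau = \bigl((b+a)\tau - b\tau\bigr)\mu_{b\tau} + \bigl(-b\bigr)\tau,
\]
i.e. $a\tau = \bigl((b+a)\tau - b\tau\bigr)\mu_{b\tau} - b\tau$ (using speciality for the last term). Solving, $(b+a)\tau - b\tau = \bigl(a\tau + b\tau\bigr)\mu_{b\tau}^{-1} = \bigl(a\tau+b\tau\bigr)\mu_{-b\tau}$. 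Now I would invoke Lemma~\ref{mumaps}(iii) ($\mu_{b\tau}=\mu_{-b}^{\tau}$, using $\mu_{-b}=\mu_b$) to rewrite $\mu_{-b\tau}=\mu_{b\tau}=\mu_b^\tau=\tau^{-1}\mu_b\tau$, so
\[
(b+a)\tau - b\tau = \bigl((a\tau+b\tau)\tau^{-1}\bigr)\mu_b\tau = (a+b)\mu_b\,\tau.
\]
Wait — this reproves (i) in disguise; the honest route is: compare $(b+a)\mu_b$ with $(b+a)\mu_a$. The real engine is that $\mu_b$ and $\mu_a$ agree "to high order" because $a\equiv b+a$ modulo the deep subgroup $U_{n+1}$, while $b\in U_m$. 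So the plan's backbone is: show $v\mu_b - v\mu_a\in U_{2n-m'+\text{(shift)}}$ whenever $v,b,a$ sit at controlled levels — this is the place where a small lemma about "$\mu$-maps depend only on the leading filtration term, modulo a controlled error" is needed, and I would either extract it from Lemma~\ref{lem:reflection} plus the commutator-filtration behaviour, or prove it directly by writing $\mu_a = \beta_{-a}\alpha_a\beta_a$ and comparing with $\mu_{b+a}$ vertex-by-vertex on $\Sigma$ and its neighbours.

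\textbf{Main obstacle.} The delicate point — and where I expect to spend the real effort — is pinning down the \emph{exact} depth $2k-2n+m+1$ of the error term $d$, rather than a weaker bound. The leading-order cancellation (that $(b+a)\tau$ and $b\mu_a\tau+a\tau$ share the same image in $U_{2k-n}/U_{2k-n+1}$) is routine from Lemma~\ref{lem:reflection}; the challenge is to show that after subtracting the leading term the remainder is not merely in $U_{2k-2n+m}$ (the "obvious" bound coming from $b\mu_a\tau\in U_{2k-2n+\ell}$ with $\ell\ge m$) but genuinely one level deeper, in $U_{2k-2n+m+1}$. This should come from the fact that the discrepancy between $\mu_{b+a}$ and $\mu_a$ on the relevant vertices is governed by $b$ itself (which lies in $U_m$, not just $U_{n+1}$), combined with the doubling $j\mapsto 2n-j$ in $\mu_a$ and $j\mapsto 2k-j$ in $\tau$; carefully composing these two reflections and keeping track of which vertex of $\Sigma$ controls the level of each factor is the heart of the argument. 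I would organize this by writing everything in terms of the maps $\beta_\bullet\in U_{\mathfrak o}$ and $\alpha_\bullet\in U_\infty$ as in the proof of Lemma~\ref{lem:reflection}, so that the filtration statements become statements about which $x_j\in\Sigma$ a given automorphism fixes.
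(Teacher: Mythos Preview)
Your algebraic route through Lemma~\ref{basic formulas} does not reach the goal: as you yourself notice, the substitution $a'=(b+a)\tau$, $b'=b\tau$ simply unwinds back to the identity you started from, and no amount of reshuffling of the $\mu$-maps via Lemmas~\ref{mumaps} and~\ref{mua=mub} will manufacture the extra ``$+1$'' in the depth of $d$. The reason is structural: the bound $d\in U_{2k-2n+m+1}$ (rather than the naive $U_{2k-2n+m}$ that Lemma~\ref{lem:reflection} alone would give) is \emph{not} a consequence of the abstract Moufang-set identities; it encodes the tree-geometric fact that a root-group element fixing a vertex $x$ acts trivially on $z^\perp$ for every $z$ strictly between $x$ and the fixed end. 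That is precisely Lemma~\ref{local Moufang sets}(i), which in turn rests on the standing hypothesis $U_{n+1}\trianglelefteq U_n$ of this subsection. Your proposal never invokes this lemma, so the sharp depth is out of reach along the algebraic path.

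Your closing paragraph, however, is on target and is essentially the paper's approach. One sets $c=\beta_{b+a}\beta_a^{-1}\in U_{\mathfrak o}$ and tracks the vertex $x_{m+1}$ (not $x_m$ --- this is where the ``$+1$'' enters) under the relevant automorphisms. The crucial step is that $x_{m+1}^{\alpha_b}$ is a neighbour of $x_m$, hence fixed by $\beta_{-a}\in U_{\mathfrak o,x_n}$ thanks to Lemma~\ref{local Moufang sets}(i); this yields $c=u\,\alpha_b^{\mu_a}$ with $u\in U_{\mathfrak o,x_{2n-m-1}}$. Conjugating by $\mu_a$ and then by $\mu_a^{-1}\tau$ gives $(b+a)\tau = d + b\mu_a\tau + a\tau$ with $d$ corresponding to $u^\tau\in U_{\infty,x_{2k-2n+m+1}}$. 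One further remark: you invoke $\mu_a=\mu_{-a}$ and speciality to identify $\tau$ with $\tau^{-1}$, but at this point in the paper $U$ is not assumed abelian (that hypothesis only enters in the next subsection), so those simplifications are not available here; the geometric argument with $\alpha$ and $\beta$ avoids them entirely.
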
 
 \begin{proof}
 Let $\rho =\mu_a$ and set $c = \beta_{b+a} \beta_a^{-1}$. Then we have 
 $$x_{m+1}^{\alpha_{b+a}} = x_{2n-m-1}^{\beta_{b+a}} =x_{2n-m-1}^{c\beta_a},$$ 
 so that
 $$x_{m+1}^{\alpha_b \alpha_a  \beta_a^{-1}}=x_{2n-m-1}^c.$$
Since $x_{m+1}^{\alpha_b}$ is adjacent to $x_m$, it is fixed by $\beta_{-a}$ in view of Lemma~\ref{local Moufang sets}(i), and we deduce
 $$x_{2n-m-1}^c = x_{m+1}^{\alpha_b \beta_{-a} \alpha_a \beta_a^{-1}} =x_{m+1}^{\alpha_b  \mu_a}
 =x_{2n-m-1}^{\mu_a^{-1} \alpha_b \mu_a},$$
 where the last equality follows from Lemma~\ref{lem:reflection}.
 Thus there exists an element $u \in U_{\mathfrak{o},2n-m-1}$ with $c = u \alpha_b^{\mu_a}$. Therefore  
 $$\alpha_{(b+a)\rho}^{\rho^{-1}} = \beta_{b+a}
  =c\beta_a=u\alpha_b^{\rho}\alpha_{a\rho}^{\rho^{-1}}.$$
Applying $\rho$ on both sides, we obtain
  $$\alpha_{(b+a)\rho} = u^{\rho} \alpha_b^{\rho^2} \alpha_{a\rho}= u^{\rho} \alpha_{b\rho^2} 
  \alpha_{a\rho} =  u^{\rho} \alpha_{b\rho^2 +a\rho}.$$
  Applying $\rho^{-1}\tau$ we get
$$\alpha_{(b+a)\tau} = u^\tau \alpha_{b\rho \tau + a \tau}.$$
Since $u^{\tau} \in U_{\mathfrak{o},2n-m-1}^{\tau} =U_{\infty, 2k -2n+m+1}$ by Lemma~\ref{lem:reflection}, there exists
  $d \in U_{m-2n+1}$ such that  $u^{\tau} = \alpha_d$, and we obtain $(b+a)\tau = d +b\rho \tau +a\tau$,  as required.
 \end{proof}

The following result provides important additional information on the local Moufang sets.

\begin{lemma}\label{local Moufang sets II}
%Assume that  $U_{n+1} \trianglelefteq U_n$ for all $n \in \ZZ$. 
Let $n \in \ZZ$ and $e \in U_n \setminus U_{ n+1}$ and set $\tau=\mu_e$. Let moreover 
%$\UU =  U_{\infty,x_n}/U_{\infty, x_{n+1}}$ 
$\UU =  U_{n}/U_{n+1}$ 
and $\ttau$ be the permutation of $\UU^\#$ defined by $x+ U_{ n+1}  \mapsto x\tau + U_{n+1}$.
Then $M(\UU,\ttau)$ is a Moufang set which is isomorphic to the local Moufang set at $x_n$.  
In particular, if the Moufang set $\mathbb  M(U,\tau)$ is special, then so are all  the local Moufang sets. 
\end{lemma}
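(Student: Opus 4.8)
The plan is to construct an explicit $G_{x_n}$-equivariant isomorphism between the local Moufang set at $x_n$ (as defined in Lemma~\ref{local Moufang sets}) and $\mathbb M(\UU, \ttau)$, and then invoke the fact that being special is a property preserved under isomorphism of Moufang sets. First I would check that $\ttau$ is well-defined: since $U_{n+1} \trianglelefteq U_n$, the quotient $\UU = U_n/U_{n+1}$ makes sense as a group, and I must verify that $x \mapsto x\tau + U_{n+1}$ does not depend on the choice of coset representative. This is where Lemma~\ref{tau} enters: taking $m = n+1$ (so $b \in U_{n+1}$ and $a \in U_n \setminus U_{n+1}$), and taking $k = n$ so that $\tau = \mu_e$ with $e \in U_n \setminus U_{n+1}$, the lemma gives $(b+a)\tau = d + b\mu_a\tau + a\tau$ with $d \in U_{2n - 2n + (n+1) + 1} = U_{n+2} \leq U_{n+1}$; since $b \in U_{n+1} \trianglelefteq U_n$ one also checks $b\mu_a \in U_{n+1}$ (by Lemma~\ref{lem:reflection}, $\mu_a$ maps the filtration step $U_{n+1}$ to $U_{n+1}$ as it reflects through $x_n$), hence $b\mu_a \tau \in U_{n+1}\tau$, and one needs that $U_{n+1}\tau \subseteq$ a single coset — more precisely that $(b+a)\tau$ and $a\tau$ lie in the same coset of $U_{n+1}$. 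So the identity $(b+a)\tau = d + b\mu_a\tau + a\tau$ together with $d, b\mu_a\tau \in U_{n+1}$ shows $\ttau$ is well-defined on $\UU^\#$.

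Next I would identify the bijection. The vertex set $x_n^\perp$ is in natural $U_{\infty, x_n}$-equivariant bijection with... more precisely, $U_{\infty, x_n}$ acts on the set of vertices in $x_n^\perp$ lying off the ray $[x_n, \infty)$, i.e. off $x_{n-1}$; and by Lemma~\ref{local Moufang sets}(i) the subgroup $U_{\infty, x_{n-1}}$ (equivalently $U_{n-1}$... no — $U_{n+1} = U_{\infty, x_{n+1}}$, let me be careful) acts trivially there. The cleanest route: the neighbours of $x_n$ distinct from $x_{n-1}$ are exactly the first vertices on rays $[x_n, \e)$ for ends $\e$ with $x_{n-1} \notin [x_n, \e)$, and via $\alpha$ and the regular action of $U_\infty$ on $\partial T \setminus \{\infty\}$ these are parametrized by cosets of the kernel of the $U_n$-action on $x_n^\perp$. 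One shows that kernel is precisely $U_{n+1}$: indeed $U_{n+1} = \alpha^{-1}(U_{\infty, x_{n+1}})$ fixes $x_{n+1}$ hence acts trivially on $x_n^\perp$ by part (i) of Lemma~\ref{local Moufang sets}, and conversely any element of $U_n$ acting trivially on $x_n^\perp$ must fix $x_{n+1}$ since it fixes the direction toward $\e$ — wait, one must be slightly more careful, but the upshot is that $U_n / U_{n+1} = \UU$ injects into $\Sym(x_n^\perp)$ with image $U_{x_{n+1}}$ in the notation of Lemma~\ref{local Moufang sets}(ii) (or rather $U_y$ for the appropriate $y$). This gives the root-group identification; the point $0$ of $\mathbb M(\UU, \ttau)$ corresponds to the neighbour $x_{n+1}$ and $\infty$ to $x_{n-1}$.

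Then I would verify that $\ttau$, viewed under this identification, is the element of the local little projective group swapping $x_{n-1}$ and $x_{n+1}$ induced by $\mu_e$. Since $e \in U_n \setminus U_{n+1}$, Lemma~\ref{lem:reflection} tells us $\mu_e$ reflects $\Sigma$ through $x_n$, so it swaps $x_{n-1} \leftrightarrow x_{n+1}$, and it normalizes $U_n$ and maps $U_{n+1}$ into... it conjugates $U_{\mathfrak o}$-stabilizers to $U_\infty$-stabilizers appropriately, so it induces a well-defined permutation of $x_n^\perp$ swapping the two base points; that this permutation is $\ttau$ on $\UU^\#$ is exactly the content of the reduction $x + U_{n+1} \mapsto x\tau + U_{n+1}$ matching $x\mu_e$ mod $U_{n+1}$, which is the definition. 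By the reconstruction principle $\mathbb M(V, \rho)$ from a pair, this determines the whole Moufang set structure, so $\mathbb M(\UU, \ttau) \cong$ local Moufang set at $x_n$, proving the first assertion.

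Finally, for the ``in particular'': $\mathbb M(U, \tau)$ special means $(-a)\tau = -(a\tau)$ for all $a \in U^\#$ (wait, the paper's convention is $-a\tau = (-a)\tau$, i.e. $-(a\tau) = (-a)\tau$). Passing to the quotient: for $a + U_{n+1} \in \UU^\#$ we have $(-(a + U_{n+1}))\ttau = (-a + U_{n+1})\ttau = (-a)\tau + U_{n+1} = -(a\tau) + U_{n+1} = -(a\tau + U_{n+1}) = -((a + U_{n+1})\ttau)$, which is exactly speciality of $\mathbb M(\UU, \ttau)$. Since speciality is preserved by Moufang set isomorphism, all local Moufang sets are special. \textbf{The main obstacle} I anticipate is the well-definedness of $\ttau$ and, relatedly, pinning down precisely that the kernel of the $U_n$-action on $x_n^\perp$ is $U_{n+1}$ (not something larger or smaller) — this is where the normality hypothesis $U_{n+1} \trianglelefteq U_n$ and Lemma~\ref{tau} with the careful index bookkeeping ($d \in U_{n+2}$, $b\mu_a \in U_{n+1}$) do the real work; everything after that is formal manipulation with the reconstruction of a Moufang set from a pair $(V, \rho)$.
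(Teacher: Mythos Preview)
Your approach matches the paper's: its proof reads in full ``Applying Lemma~\ref{tau} with $k = n$ and $m = n+1$, we see that the map $\ttau$ is well defined. The first claim follows from Lemma~\ref{local Moufang sets}, while the second follows from the first.'' You are correctly unpacking each of these steps, including the speciality check at the end.

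There is, however, one genuine slip in your well-definedness argument. You assert that $b\mu_a \in U_{n+1}$ because ``$\mu_a$ maps the filtration step $U_{n+1}$ to $U_{n+1}$ as it reflects through $x_n$''. This is false: by Lemma~\ref{lem:reflection}, if $b \in U_{m'} \setminus U_{m'+1}$ with $m' \geq n+1$, then $b\mu_a \in U_{2n-m'} \setminus U_{2n-m'+1}$, and $2n - m' \leq n-1$, so $b\mu_a \notin U_n$, let alone $U_{n+1}$. What \emph{is} true --- and is all you need --- is that $b\mu_a\tau \in U_{n+1}$. One sees this either by applying Lemma~\ref{lem:reflection} a second time to $\tau = \mu_e$ (giving $b\mu_a\tau \in U_{2n-(2n-m')} = U_{m'} \subseteq U_{n+1}$), or by noting that $\mu_a\mu_e \in \HH_n$ fixes the apartment $\Sigma$ pointwise and therefore normalises every $U_m$. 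With this correction, your identity $(b+a)\tau = d + b\mu_a\tau + a\tau$ with $d \in U_{n+2}$ and $b\mu_a\tau \in U_{n+1}$ does yield $(b+a)\tau \equiv a\tau \bmod U_{n+1}$, and the rest of your proposal goes through.
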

 
\begin{proof}
Applying Lemma~\ref{tau} with $k = n$ and $m = n+1$, we see that the map $\ttau$ is well defined. The first   claim follows from Lemma~\ref{local Moufang sets}, while the second follows from the first.
\end{proof}

\subsection{Local action of the Hua group}
  
In the rest of the subsection, we will assume further that $U$ is abelian. In particular the condition $U_{n+1} \trianglelefteq U_n$ trivially holds for all $n \in \ZZ$. Notice that the Moufang set $\mathbb  M(U, \tau)$ is necessarily proper, since the Hua group $\HH$ contains hyperbolic elements: indeed, every element element of the form  $\mu_a \mu_b$, with $a \in U_0 \setminus U_1$ and $b\in U_1 \setminus U_2$, acts as a translation of length~$2$ on the apartment $\Sigma$. The main result from  \cite{S2} therefore ensures that $\mathbb  M(U,\tau)$ is special. This fact will be used extensively, without further notice.

Notice that, in the present setting, the quotient group $\HH_n/\HH_n^{(1)}$ is isomorphic to the Hua group of the local Moufang set at $x_n^{\perp}$.  A key question consists in understanding how the group $\HH_n$ acts on the local Moufang sets at $x_{n+1}$ and $x_{n-1}$. This question will be answered with the help of the following two technical but crucial results.

\begin{lemma}\label{Hua maps} 
Assume that $U$ is abelian. 
Let $i \in \{0,1\}$, let $e \in U_i\setminus U_{i+1}$ and set $\tau =\mu_e$. 
% and $h_a =\tau \mu_a $ for $a \in U^\#$.
Given $a \in U_i \setminus U_{i+1}$ and $b \in U_m$ with $m >i$, the following holds for any $x \in U_i^\#$:
 $$xh_{a,b} \equiv   - bh_x h_{x\tau, a} \equiv   -bh_{x,a\tau} h_a \mod  U_{m+1}.$$ 
%In particular we have $xh_a \equiv xh_{a+b}  \mod \ U_m$ for any $x \in U_i$.
 \end{lemma}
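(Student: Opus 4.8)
The plan is to derive the two asserted congruences from the biadditivity identities for Hua maps established in Lemma~\ref{hab} and Lemma~\ref{biadditivity}, combined with the refined additivity modulo $U_{m+1}$ supplied by Lemma~\ref{tau}. The key point is that, modulo $U_{m+1}$, the permutation $\tau$ behaves \emph{additively} on the coset $b + U_{m+1}$ after twisting the $b$-part by a Hua map: indeed Lemma~\ref{tau}, applied with $k = n = i$, tells us that $(b+a)\tau \equiv b\mu_a\tau + a\tau \pmod{U_{m+1}}$, since the error term $d$ lies in $U_{2i-2i+m+1} = U_{m+1}$. So working in the finite local picture, or rather modulo $U_{m+1}$, the map $x \mapsto x h_{a,b} = x h_{a+b} - x h_a - x h_b$ can be unwound.

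First I would fix $x \in U_i^\#$ and recall that, since $\mathbb M(U,\tau)$ is special (by Segev's theorem, invoked in the subsection), Lemma~\ref{hab} gives $x\tau\, h_{x, c} = -2c$ for every $c \in U$. The strategy is to express $x h_{a,b}$ by feeding it through the element $\mu_x$ (equivalently through $h_x = \tau\mu_x$ together with $\tau$). Concretely, apply Lemma~\ref{tau} to the pair $(a, b)$ and also to auxiliary pairs in order to compare $(a+b)\tau$, $a\tau$, and $b\tau$ modulo $U_{m+1}$; the ``defect'' of additivity is governed precisely by a Hua map evaluated on $b$. Then transport this defect back through $h_x$ (respectively through $h_{x\tau}$), using Lemma~\ref{mumaps}(ii) ($\mu_{ah} = \mu_a^h$) and Lemma~\ref{mumaps}(iii) ($\mu_{a\mu_b} = \mu_{-a}^{\mu_b}$) to rewrite conjugates of $\mu$-maps as $\mu$-maps of transformed arguments, keeping track of which congruence class modulo the $U_j$'s each argument lives in via Lemma~\ref{lem:reflection} and Lemma~\ref{tau}. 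The identity $h_{x\tau,a}$ versus $h_{x,a\tau}h_a h_x^{-1}$ (or the analogous relation) should drop out of Lemma~\ref{biadditivity}(i), the ``associativity'' of the symmetric map $h_{\cdot,\cdot}$, applied with a suitable substitution; the two displayed expressions $-b h_x h_{x\tau,a}$ and $-b h_{x,a\tau} h_a$ are then seen to agree because $h_x h_{x\tau} = h_x \tau \mu_x \tau \cdots$ telescopes appropriately, again using that the Moufang set equals $\mathbb M(U,\tau^{-1})$ by speciality (Lemma~\ref{mua=mub}(ii), which gives $\mu_a = \mu_{-a}$, hence $\tau = \mu_e = \mu_{-e}$ and $\mathbb M(U,\tau) = \mathbb M(U,\tau^{-1})$).

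The main obstacle I expect is bookkeeping of the residues: every application of $\tau$, $\mu_a$, or a Hua map shifts the ``level'' $n$ of the relevant subgroup $U_n$, and one must verify at each step that the accumulated error stays inside $U_{m+1}$ so that the congruences are preserved. This is where Lemma~\ref{lem:reflection} (giving $x\mu_a \in U_{2n-m}\setminus U_{2n-m+1}$) and the precise index $2k-2n+m+1$ appearing in Lemma~\ref{tau} must be used with care; a single off-by-one in the level would break the argument. A secondary subtlety is that $h_a$ is \emph{not} invertible when the local Moufang set is non-proper, but here we only ever use $h_a$ as an endomorphism of $U$ and compose it with genuine automorphisms $h_x$, $h_{x\tau}$, so the equalities should be read as identities of endomorphisms modulo $U_{m+1}$, which is exactly the form in which the lemma is stated. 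Once the level-tracking is set up, the algebraic core is a short manipulation of the biadditivity relations in Lemma~\ref{biadditivity} and the $\mu$-map identities in Lemma~\ref{mumaps}.
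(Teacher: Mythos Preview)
Your plan identifies the right atmosphere---Lemma~\ref{tau} is indeed the engine that lets you work modulo $U_{m+1}$---but it is missing the concrete mechanism, and as written I do not see how it closes. Two specific gaps:

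First, the paper does not attack $x h_{a,b}$ via biadditivity or by ``feeding through $\mu_x$''. It goes back to the \emph{definition} of the Hua map, $x h_{a+b} = ((x\tau + a + b)\tau - (a+b)\tau)\tau + a + b$, and then applies Lemma~\ref{tau} three times: to $(x\tau+a+b)\tau$, to $(a+b)\tau$, and to the outer $\tau$. What falls out is an expression involving $b h_{x\tau+a}^{-1}$ and $b h_a^{-1}$ composed with $h_{(x\tau+a)\tau - a\tau}^{-1}$, and the crucial step---which your plan does not mention---is Lemma~\ref{basic formulas}(ii), the identity $\mu_{-b}\mu_{b-a}\mu_a = \mu_{(a\tau^{-1}-b\tau^{-1})\tau}$. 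This is what collapses those composites down to $h_x h_a$ and $h_x h_{x\tau+a}$. Without that identity (or an equivalent $\mu$-map triple-product formula) I do not see how the generic $\mu$-conjugation rules in Lemma~\ref{mumaps} alone will get you there. There is also a genuine case split: the expansion above requires $x\tau + a \notin U_{i+1}$, and when $x\tau + a \in U_{i+1}$ one has to reduce to the first case by writing $x = -a\tau + c$ with $c \in U_{i+1}$ and invoking Lemma~\ref{hab} and Lemma~\ref{biadditivity}. Your plan does not anticipate this.

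Second, the equivalence of the two displayed expressions $-b h_x h_{x\tau,a}$ and $-b h_{x,a\tau} h_a$ does not come from Lemma~\ref{biadditivity}(i). The paper cites an identity from \cite{DS3} (their Proposition~5.8(2)), namely $h_x h_{x\tau,a} h_{a\tau} = h_{x,a\tau}$, which is a separate $\mu$-map computation rather than a consequence of the additive symmetry $h_{a+b,c} - h_{a,c} - h_{b,c} = h_{a,b+c} - h_{a,b} - h_{a,c}$. So your proposed route for that step is pointing at the wrong lemma.
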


\begin{proof}
We only prove this for $i=0$;  the proof is similar in case $i = 1$.

We recall that $\mathbb  M(U,\tau)$ is special. Moreover  $\tau = \tau^{-1}$ by Lemma~\ref{mua=mub}.  

 Let $x\in  U_0^\#$. By the definition of the Hua maps (see Definition 3.4 in \cite{DS1}), we have
 $$xh_{a+b} =((x\tau+a+b)\tau -(a+b)\tau)\tau +a+b.$$

Suppose first $x\tau +a \not\in U_1$. If $x \in U_n \setminus U_{n+1}$ with $n \geq 0$, then 
 $x\tau,x\tau +a \in U_{-n} \setminus U_{-n+1}$, so by Lemma~\ref{tau}
  $$(x\tau +a +b)\tau \equiv (x\tau +a)\tau +bh_{x\tau +a}^{-1} \mod  U_{m+2n+1}.$$
By Lemma~\ref{basic formulas}(a), we have
$$((x\tau +a)\tau -a\tau)\tau = x\tau \mu_a - a = xh_a -a.$$
%so that $(x\tau +a)\tau -a\tau = (xh_a -a)\tau$. 
%(x\tau-(-a\tau)\tau)\tau -a\tau =(x+a\tau)\mu_{-a\tau} +a\tau-a\tau=$$
%$$(x+a\tau)\tau \mu_a \tau =(x\tau \mu_a +a\mu_a)\tau = (xh_a -a)\tau.$$
If $xh_a -a \in U_1$, then 
$x\tau \mu_a \equiv a \mod U_1$, so $x\tau \equiv a\mu_a \equiv -a \mod  U_1$,  
contradicting the assumption that $x\tau +a \not\in U_1$. 
Thus $((x\tau +a)\tau -a\tau)\tau \in U_0 \setminus U_1$, so that $(x\tau +a)\tau -a\tau  \in U_0 \setminus U_1$ by Lemma~\ref{lem:reflection}. Now we have
\begin{align*}
(x\tau +a+b)\tau -(a+b)\tau  
&\equiv  (x\tau+a)\tau +bh_{x\tau +a}^{-1} -(a+b)\tau \\
&\equiv   (x\tau +a)\tau -a\tau +bh_{x\tau +a}^{-1}-bh_a^{-1} \mod  U_{m+1},
\end{align*}
where Lemma~\ref{tau} has been used to evaluate $(a+b)\tau$. 
Applying $\tau$ and using  Lemma~\ref{tau} once more, we obtain 
\begin{align*}
((x\tau +a+b)\tau -(a+b)\tau) \tau 
& \equiv ( (x\tau +a)\tau -a\tau +bh_{x\tau +a}^{-1} -bh_a^{-1})\tau \\
&\equiv  
((x\tau +a)\tau -a\tau)\tau +bh_{x\tau +a}^{-1} h_{(x\tau +a)\tau -a\tau}^{-1} 
-bh_a^{-1} h_{(x\tau +a)\tau -a\tau}^{-1}
\end{align*}
modulo  $ U_{m+1}$.
Applying Lemma~\ref{basic formulas}(b), we deduce
\begin{align*}
h_{x\tau+a}^{-1} h_{(x\tau+a)\tau -a\tau}^{-1} 
&=  \mu_{x\tau +a} \tau \mu_{(x\tau+a)\tau -a\tau} \tau 
\  = \ \mu_{x\tau +a} \mu_{((x\tau+a)\tau -a\tau)\tau} \\
&=  \mu_{x\tau+a}\mu_{x\tau+a} \mu_{x \tau} \mu_a
\ = \ \tau \mu_x \tau \mu_a \\
&=  h_x h_a
\end{align*}
and
\begin{align*}
h_a^{-1} h_{(x\tau+a)\tau -a\tau}^{-1}  
&=  \mu_a \tau \mu_{(x\tau+a)\tau -a\tau}\tau 
\ = \ \mu_a \mu_{((x\tau+a)\tau -a\tau)\tau} \\
&=  \mu_a \mu_{-a} \mu_{-x\tau}  \mu_{x\tau+a}
\ = \ \mu_{x\tau} \mu_{x\tau +a} 
\ = \   \tau \mu_x \tau \mu_{x\tau +a} \\
&=  h_x h_{x\tau+a}.
\end{align*}
Moreover we have $h_{x\tau} = \tau \mu_{x\tau} = \tau \tau^{-1} \mu_{-x} \tau = h_x^{-1}$. 
Thus
\begin{align*}
xh_{a+b} &=  ((x\tau +a+b)\tau -(a+b)\tau)\tau +a+b \\
&\equiv  ((x\tau +a)\tau -a\tau)\tau + a +b h_x h_a - b h_x h_{x\tau +a}  + b\\
&\equiv  xh_a  +b h_x h_a - b h_x h_{x\tau +a}  + b h_x h_{x\tau} \\
&\equiv  xh_a -bh_x(h_{x\tau +a}-h_a -h_{x\tau}) \\
& \equiv   xh_a -bh_x h_{x\tau,a} \mod  U_{m+1}.
\end{align*}
Since $xh_b \in U_{2m} \leq U_{m+1}$, we have 
$$x h_{a,b} = xh_{a+b} - xh_a -xh_b \equiv -bh_x h_{x\tau,a} \mod U_{m+1},$$
which confirms  the first equality asserted by the lemma.  
Proposition~5.8(2) from \cite{DS3} ensures that $h_x h_{x,a\tau} h_a^{-1} = h_x h_{x\tau, a} h_{a\tau} = h_{x,a\tau} $, and the second equality from the lemma follows. 

\medskip
We now suppose that $x\tau +a \in U_1$. Then Lemma~\ref{tau} ensures 
$$x=x\tau^2 = (-a+x\tau +a)\tau \equiv (-a)\tau +(x\tau +a) h_a^{-1} \mod  U_1,$$
 so that $c:=x+a\tau=x-(-a)\tau \in U_1$. Therefore
$$x h_{a,b} = (-a\tau +c) h_{a,b} = 2b +ch_{a,b}$$ 
by Lemma~\ref{hab}, and 
$$bh_{a\tau,x} h_a = bh_{a\tau, c-a\tau} h_a.$$
Now Lemma~\ref{biadditivity} implies 
$$h_{a\tau,c-a\tau} =h_{0,c} -h_{a\tau,c}-h_{-a\tau,c}+h_{a\tau,c} +h_{a\tau,-a\tau}=-h_{-a\tau,c}-2h_{a\tau} = -h_{-a\tau,c} -2h_a^{-1}.$$
Since $-c \in U_1$, we have 
$-c\tau \in U_{-k} \setminus U_{-k+1}$ for some $k>0$ by Lemma~\ref{lem:reflection}, so that $a - c\tau 
\not\in U_1$. We may therefore apply the first part of the proof to $-c$ and deduce that
$- c h_{a,b} \equiv -bh_{-c, a\tau} h_a \mod U_{m+1}$.  Since $\mu_d = \mu_{-d}$ for all $d \in U^\#$ by 
Lemma~\ref{mua=mub}, we also have $h_d = h_{-d}$ and $h_{d, f} = h_{-d, -f}$. 
Thus we get
\begin{align*}
bh_{a\tau,x} h_a 
&=  -bh_{-a\tau,c} h_a -2b  
\ = \  -bh_{a\tau,-c} h_a -2b \\
&\equiv    -c h_{a,b} -2b \\
&\equiv   -xh_{a,b} \mod  U_{m+1}.
\end{align*}
As in the first case, this also implies that $x h_{a, b} \equiv -b h_x h_{x\tau, a} \mod U_{m+1}$. 
\end{proof}

 \begin{prop}\label{Hua maps II}
 Assume that $U$ is abelian.  Let $i \in \{0,1\}$, let $e \in U_i \setminus U_{i+1}$ and set $\tau =\mu_e$. For any $a \in U_i \setminus U_{i+1}$ and $b \in U_m$ with $m>i$, we have the following:
 \begin{enumerate}
 \item $x h_a \equiv x h_{a+b} \mod U_m$ for all $x \in U_i$.
 
 \item $x h_a \equiv x h_{a+b} \mod U_{m+1}$ for all $x \in U_m$.
\end{enumerate}
%Given $x \in U_i \setminus U_{i+1}$ and $b,c \in U_m$ with  $m >i$, we have $ch_{x+b} \equiv ch_x \mod  U_{m+1}$. 
 \end{prop}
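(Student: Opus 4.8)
The plan is to derive both congruences from the two explicit formulas for $xh_{a,b}$ supplied by Lemma~\ref{Hua maps}, together with the biadditivity relations of Lemma~\ref{biadditivity} and the control on $\mu$-maps via Lemma~\ref{lem:reflection}. Recall that by definition $xh_{a+b} = xh_a + xh_b + xh_{a,b}$, so understanding the congruence class of $xh_{a+b}$ modulo $U_m$ (resp. $U_{m+1}$) amounts to understanding the classes of $xh_b$ and of $xh_{a,b}$ in the relevant quotient.

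For part (i), I would first observe that since $b \in U_m$ and the Hua map $h_b$ raises the level appropriately — concretely, for $x \in U_i$ one has $xh_b \in U_{2m-i} \subseteq U_m$ because $m > i$, using Lemma~\ref{lem:reflection} to track levels through $\mu_b$ and then $\tau$. Thus it suffices to show $xh_{a,b} \in U_m$ for all $x \in U_i$. This is immediate from the first formula of Lemma~\ref{Hua maps}, $xh_{a,b} \equiv -bh_x h_{x\tau,a} \bmod U_{m+1}$: the right-hand side already lies in $U_m$ since $b \in U_m$ and both $h_x$ and $h_{x\tau,a}$ are level-preserving endomorphisms in the sense that they map $U_m$ into $U_m$ (the endomorphism $h_{x\tau,a}$ maps $U$ to $U$ and, being built from Hua maps attached to elements of level $i$, preserves the filtration at levels $\geq m > i$; for $h_x$ one argues as before via Lemma~\ref{lem:reflection}). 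Hence $xh_{a,b} \in U_m$, and combining, $xh_{a+b} = xh_a + xh_b + xh_{a,b} \equiv xh_a \bmod U_m$.

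For part (ii) I would restrict to $x \in U_m$ and aim for the sharper congruence modulo $U_{m+1}$. Now $xh_b \in U_{2m - i}$, and since $m > i \geq 0$ we have $2m - i \geq m+1$, so $xh_b \in U_{m+1}$ and this term drops out modulo $U_{m+1}$. It remains to show $xh_{a,b} \in U_{m+1}$ for $x \in U_m$. Here I would again invoke Lemma~\ref{Hua maps}, this time in its sharpened form: the congruence $xh_{a,b} \equiv -bh_x h_{x\tau,a} \bmod U_{m+1}$ still holds, and for $x \in U_m$ the element $h_x$ now moves levels far up — by Lemma~\ref{lem:reflection}, $\mu_x$ sends $U_\ell$ to roughly $U_{2\cdot(\text{level of }x)-\ell}$, and composing with $\tau$ shows $h_x = \tau\mu_x$ maps $U_m$-type data deep into the filtration — so that $bh_x h_{x\tau,a} \in U_{m+1}$ simply because the first factor $b \in U_m$ is pushed up by the remaining operators, or because $x$ itself being of level $m$ forces $h_x$ to contribute positively to the level. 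More carefully: one uses that $h_{x\tau, a}$ is biadditive in each slot (Lemma~\ref{biadditivity}(i)) and that $x \in U_m$ together with Lemma~\ref{local Moufang sets II} applied at higher levels gives $h_{x\tau, a}$ vanishing modulo $U_{m+1}$ on the relevant arguments. Either route yields $xh_{a,b} \in U_{m+1}$, and hence $xh_{a+b} \equiv xh_a \bmod U_{m+1}$.

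The main obstacle I anticipate is the careful bookkeeping of how each Hua map and each $\mu$-map shifts the filtration levels $U_n$, especially distinguishing the case $i = 0$ from $i = 1$ and keeping track of whether an element like $x\tau + a$ lands inside $U_{i+1}$ or not — precisely the dichotomy that made the proof of Lemma~\ref{Hua maps} split into two cases. One must ensure that the level estimates are uniform enough that the error terms genuinely land in $U_m$ (for (i)) and in $U_{m+1}$ (for (ii)); the sharper claim (ii) is the delicate one, since it requires that the "cross term" $xh_{a,b}$ gains an extra unit of filtration depth exactly when $x$ is pushed from level $i$ up to level $m$, and this must be extracted cleanly from the formula $xh_{a,b} \equiv -bh_x h_{x\tau,a}$ rather than from a crude level count alone.
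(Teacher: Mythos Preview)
Your treatment of part~(i) is correct and coincides with the paper's argument: write $xh_{a+b}-xh_a = xh_b + xh_{a,b}$, note $xh_b \in U_m$, and invoke Lemma~\ref{Hua maps} to conclude that $xh_{a,b}$ lies in $U_m$ (the paper uses the form $-bh_{x,a\tau}h_a$ rather than $-bh_xh_{x\tau,a}$, but this is immaterial).

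Part~(ii), however, has a genuine gap. Your plan is to show $xh_{a,b} \in U_{m+1}$ for $x \in U_m$ by a direct level estimate on $-bh_xh_{x\tau,a}$, but this fails. If $x \in U_k \setminus U_{k+1}$ with $k \geq m$, then $h_x = \tau\mu_x$ sends $U_j$ to $U_{2k-2i+j}$, so $bh_x \in U_{2k-2i+m}$. On the other hand $x\tau \in U_{2i-k}\setminus U_{2i-k+1}$, so each of $h_{x\tau}$ and $h_{x\tau+a}$ sends $U_j$ to $U_{2i-2k+j}$, and therefore $h_{x\tau,a}$ does too. The gain from $h_x$ is exactly cancelled by the drop from $h_{x\tau,a}$, yielding only $bh_xh_{x\tau,a} \in U_m$, not $U_{m+1}$. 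Your gestures toward Lemma~\ref{biadditivity} and Lemma~\ref{local Moufang sets II} do not supply the missing unit of depth; the latter lemma says nothing about vanishing of $h_{x\tau,a}$.

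The paper's route for~(ii) is indirect. One introduces an auxiliary $y \in U_i \setminus U_{i+1}$ and applies Lemma~\ref{Hua maps} with $y$ as the outer variable to the pairs $(a,b)$, $(a,x)$, $(a,b+x)$, obtaining $yh_{a,b+x} \equiv yh_{a,b} + yh_{a,x} \bmod U_{m+1}$. Combined with Lemma~\ref{biadditivity}(i) and the crude estimate $yh_{b,x} \in U_{m+1}$, this gives $yh_{a+b,x} \equiv yh_{a,x} \bmod U_{m+1}$. Applying Lemma~\ref{Hua maps} once more to each side yields $xh_yh_{y\tau,a+b} \equiv xh_yh_{y\tau,a} \bmod U_{m+1}$; cancelling $h_y$ (which preserves every $U_j$) and substituting $y\tau\mapsto y$ gives $xh_{y,a+b} \equiv xh_{y,a} \bmod U_{m+1}$ for \emph{every} $y \in U_i\setminus U_{i+1}$. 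Specializing $y=-a$ and using $h_{-a,a}=-2h_a$ then finishes the proof. The key idea you are missing is that Lemma~\ref{Hua maps} gives sharp control only when the outer variable sits at level~$i$; one must work there first and only extract the level-$m$ statement at the very end.
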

 
 \begin{proof}

 \medskip \noindent
 (i) Let $x \in U_i^\#$. By Lemma~\ref{Hua maps}, we have $xh_{a+b} - x h_a = x h_{a, b} +x h_b \equiv -b h_{x, a\tau} h_{a} + xh_b \mod U_{m+1}$. Since $xh_b$ and $-b h_{x, a\tau} h_{a} $ are both   contained in $U_m$, we deduce that   $xh_{a+b} \equiv x h_a \mod U_m$ as desired.

 \medskip \noindent
 (ii) Let $x \in U_m$ and $y \in U_i \setminus U_{i+1}$. 
Lemma~\ref{Hua maps} implies that 
 $$yh_{a,b+x} \equiv yh_{a,b} +yh_{a,x} \mod  U_{m+1}.$$
Moreover $yh_{b,x} =yh_{b+x} - yh_b - yh_x \in U_{2m} \leq U_{m+1}$. Therefore, Lemma~\ref{biadditivity} implies
\begin{align*}
yh_{a+b,x} &=  y h_{a, b+x} - y h_{a, b} + y h_{b, x}\\
& \equiv  y h_{a,x}  \mod  U_{m+1}.
\end{align*}
Applying Lemma~\ref{Hua maps} again twice, we obtain
 $$-xh_y h_{y\tau,a} \equiv yh_{a,x} \equiv yh_{a+b,x} \equiv -x h_y h_{y\tau,a+b} 
 \mod  U_{m+1}.$$
Lemma~\ref{lem:reflection} ensures that $h_y$ acts trivially on the apartment $\Sigma$, and therefore normalises $U_m$. We may thus replace $x$ by $xh_y^{-1}$ in the above equation, which yields
 $$xh_{y\tau,a} \equiv xh_{y\tau,a+b} \mod  U_{m+1}.$$ 
%
% Invoking  Lemma~\ref{Hua maps} once more \red{(the condition $a+b \in U_m$ is not satisfied ???)}, we obtain
% \begin{align*}
% ch_{x\tau,a+b} & \equiv  & -(a+b)h_{x,c}h_x^{-1} \\
% &\equiv  -ah_{x,c} h_x^{-1} -bh_{x,c}h_x^{-1} \\
% & \equiv  & ch_{x\tau,a}+ch_{x\tau,b} \mod  U_{m+1}.
% \end{align*}
%It follows that   $ch_{x\tau,b} \in U_{m+1}$. The claim follows by replacing  $x$ with $x\tau$.
%
%
%\footnote{\red{This is an alternative argument compared to the previous version.}}%
Replacing $y\tau$ by $y$, we obtain $x h_{y, a} \equiv x h_{y, a+b} \mod U_{m+1}$. Specialising to $y = -a$, we get 
$-2xh_a = xh_{a,-a} \equiv xh_{-a,a+b} \equiv xh_b -xh_{-a} -xh_{a+b} \mod U_{m+1}$ and thus
$-x h_{a} \equiv x h_b - x h_{a + b} \equiv -x h_{a +b} \mod U_{m+1}$ since $x h_b \in U_{3m} \leq U_{m+1}$. 
\end{proof}

\section{Locally finite abelian boundary Moufang trees}

We shall now focus on the case when  $T$ is locally finite and $U$ is abelian. 

\subsection{The local Moufang sets are finite projective lines}

\begin{lemma}\label{local situation} 
Assume that $T$ is locally finite and that $U$ is abelian. Then for 
 every vertex $x$, there is a 
 finite field $\mathbf{F}$ such that the local Moufang set at $x$ is isomorphic to 
 $\mathbb {M}(\mathbf{F})$.
 \end{lemma}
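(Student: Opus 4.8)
The goal is to show that each local Moufang set $(x^\perp, (U_y)_{y \in x^\perp})$ is isomorphic to $\mathbb{M}(\mathbf{F})$ for a finite field $\mathbf{F}$. By Lemma~\ref{local Moufang sets II}, the local Moufang set at $x_n$ is isomorphic to $\mathbb{M}(\UU, \ttau)$ with $\UU = U_n/U_{n+1}$, and since $T$ is locally finite this is a \emph{finite} Moufang set whose root group $\UU$ is \emph{abelian} (being a quotient of the abelian group $U$). Moreover, by Lemma~\ref{local Moufang sets II} again, $\mathbb{M}(U,\tau)$ being special forces $\mathbb{M}(\UU,\ttau)$ to be special as well. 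So the problem reduces to: a finite special Moufang set with abelian root group is a projective line over a finite field. The plan is to invoke the classification of finite Moufang sets, or more precisely the structure theory of proper finite special Moufang sets with abelian root groups, which forces the root group to carry the structure of a (quadratic Jordan, hence here associative) division algebra that is finite, and a finite division algebra is a field by Wedderburn.

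First I would dispose of the degenerate case: if $\mathbb{M}(\UU,\ttau)$ is \emph{not} proper, then it is sharply $2$-transitive, and a finite sharply $2$-transitive group with abelian point stabiliser complement (the root group $\UU$) is $\mathrm{AGL}_1(\mathbf{F})$ for a finite field $\mathbf{F} = \FF$; this still gives $\mathbb{M}(\UU,\ttau) \cong \mathbb{M}(\mathbf{F})$ since the Moufang set $\mathbb{M}(\FF)$ includes that case. (One should double-check whether the setup actually forces properness of the \emph{local} Moufang sets; globally $\mathbb{M}(U,\tau)$ is proper because $\HH$ contains hyperbolic elements, but a priori a local Moufang set could be sharply $2$-transitive — either way the conclusion $\mathbb{M}(\mathbf{F})$ holds, so this is harmless.) In the proper case, Segev's theorem (already quoted in the excerpt) applies to give that $\mathbb{M}(\UU,\ttau)$ is special with abelian root group; then the finite case of the Hua–Jordan machinery — e.g. via \cite{DW} Theorem~4.2 and the fact that a finite quadratic Jordan division algebra is a finite field, or directly via the classification of finite split BN-pairs of rank $1$ — identifies it with $\mathbb{M}(\FF)$.

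The cleanest route, avoiding heavy classification, is probably this: a finite special Moufang set $\mathbb{M}(V, \rho)$ with $V$ abelian has a Hua group $\HH_0 = \la \mu_a \mu_b \mid a,b \in V^\# \ra$ acting on $V$, and the structure makes $V$ into a commutative (Hua) division ring — the multiplication being essentially recorded by the Hua maps $h_a$, with bi-additivity coming from Lemma~\ref{biadditivity} and Lemma~\ref{hab}. A finite associative division ring is a field by Wedderburn's little theorem, and then $\mathbb{M}(V,\rho)$ must coincide with $\mathbb{M}(\FF)$ where $|\FF| = |V|$. I would therefore structure the write-up as: (1) reduce to $\mathbb{M}(\UU,\ttau)$ via Lemmas~\ref{local Moufang sets} and~\ref{local Moufang sets II}; (2) note $\UU$ is a finite abelian group and $\mathbb{M}(\UU,\ttau)$ is special; (3) cite the classification of finite special Moufang sets with abelian root groups (ultimately Wedderburn plus \cite{DW} / \cite{DS1}) to conclude $\mathbb{M}(\UU,\ttau) \cong \mathbb{M}(\FF)$.

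The main obstacle I anticipate is making precise exactly which previously-established or citable result delivers step (3) in the finite case without re-proving a chunk of Moufang-set theory: one wants a clean statement of the form "a finite special Moufang set with abelian root group is $\mathbb{M}(\FF)$." If such a statement is not directly available as a black box, the fallback is to build the field structure by hand from the Hua maps $\{h_a\}$ — using $h_{a,b}$ bi-additive (Lemma~\ref{biadditivity}(i)), $a\tau h_{a,b} = -2b$ (Lemma~\ref{hab}) to pin down the pairing, and $a\mu_a = -a$ (Lemma~\ref{mumaps}(iv)) — to exhibit $\UU$ as (the additive group of) a finite associative division ring, then apply Wedderburn. This hand construction is the only place where real work is needed; everything else is assembling already-proved lemmas.
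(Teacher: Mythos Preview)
Your approach is essentially identical to the paper's: reduce to the local Moufang set being finite and special via Lemma~\ref{local Moufang sets II}, then invoke a black box for step~(3). The precise black box you were searching for is provided by \cite{S1} and \cite{DS2}, which together show that \emph{every} finite special Moufang set (no abelian hypothesis on the root group needed) is isomorphic to $\mathbb{M}(\FF)$ for some finite field $\FF$; so your fallback hand-construction via Wedderburn is unnecessary, and the non-proper case you worried about is absorbed into the same citation.
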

 \begin{proof}
As observed above, the Moufang set $\mathbb  M(U,\tau)$ is special as a consequence of \cite{S2}. By Lemma~\ref{local Moufang sets II} the local Moufang sets are also special,  
so the claim follows since every finite special Moufang set is of the form $\mathbb {M}(\mathbf{F})$ for some finite field $\mathbf F$, in view of \cite{S1} and \cite{DS2}.
\end{proof}  
 
%From now on, we suppose that $T$ is locally finite and that $U$ is an elementary-abelian $p$-group for some prime $p$.  
%We assume further that the root group $U_{\infty}$ is closed in $\Aut(T)$. 
Recall that the little projective group $G$ is $2$-transitive on the set of ends $\partial T$. This implies that $G$ is edge-transitive on $T$ (see Lemma~3.1.1 from \cite{BurgerMozes}). Moreover $G$ is generated by elliptic automorphisms of $T$, so that $G$ preserves the canonical bipartition of the vertex set. We conclude that $G$ acts with exactly two orbits of vertices. From Lemma~\ref{local situation}, it follows that there are prime powers $q_0$ and $q_1$ such that  the local Moufang sets are all isomorphic to $\mathbb {M}(\FF_{q_0})$ or  $\mathbb {M}(\FF_{q_1})$.    

%natural numbers $r_0,r_1 \geq 1$ such that $|U_{2n}:U_{2n+1}| =p^{r_0}=q_0$ and 
%$|U_{2n+1}:U_{2n+2}|=p^{r_1}=q_1$ for all $n \in \ZZ$. Thus the local Moufang sets are all isomorphic to $\mathbb {M}(\FF_{q_0})$ or  
%$\mathbb {M}(\FF_{q_1})$.    

\subsection{Construction of finite root subgroups}

We will now assume further that $U$ is of exponent $p$ for some prime $p$. This implies that the root groups of the local Moufang sets are also groups of exponent $p$. Therefore the finite fields $\FF_{q_0}$ and $\FF_{q_1}$ are both of characteristic $p$ in that case. 

Our next goal is to construct a finite root subgroup $V_n \leq U$ fixing the vertex $x_n$ and whose image in $\Sym(x_n^\perp)$ is a root group of the local Moufang set at $x_n$. This will be achieved in Proposition~\ref{prop:FiniteRootSubgroup} below. We first need to collect a number of technical preparations.

 %\begin{lemma}\label{cyclic} 
 %Let $G$ be a group, and $N_1$, $N_2$ be normal subgroups  of finite index. Let $x \in G$ be such that $%G = \la x \ra N_1 = \la x \ra N_2$, and let $n = |G/N_1|$. If $G/N_1 \cap N_2$ is cyclic and 
 %$x^n \in N_2$, then $N_1 \leq N_2$.  
% Suppose that there is an element $x \in G$ such that $xN_1$ generates $G/N_1$ and that $\{x^i N_2; 0 \leq i <n \}$ generate $G/N_2$, where $n$ is   the order of $xN_1$ in $G/N_1$. If $x^n \in N_2$, then $N_1 \leq N_2$. 
% \end{lemma}
% \begin{proof}
%The hypotheses imply that $G/N_1 \cap N_2$ is cyclic and generated by the image of $x$. The hypothesis that $x^n \in N_2$ implies that  $[G:N_2]$ divides $[G:N_1]$. Therefore $|N_1 : N_1 \cap N_2|$ divides $|N_2 : N_1 \cap N_2|$. On the other hand, in the cyclic group $G/N_1 \cap N_2$,  the subgroups $N_1 / N_1\cap N_2$ and $N_2 / N_1 \cap N_2$ have trivial intersection, and hence their orders are relatively prime. It follows that $N_1 / N_1\cap N_2$  is trivial, so that $N_1 \leq N_2$ as required.
%\end{proof}  

\begin{lemma}\label{structure of the Hua group}
Assume that $T$ is locally finite and that $U$ is abelian of exponent $p$ for some prime $p$. 
Let $i \in \{0,1\}$ and $n \geq i$. Let $V \leq U_i$ be a subgroup satisfying the following conditions:
\begin{itemize}
\item[(1)] $U_i =V +U_{i+1}$ if $n>i$, and $V=U_i$ if $n=i$.
\item[(2)] $V \cap U_{i+1} =U_{n+1} $.
\item[(3)] $V$ is normalized by $\tilde{\cal H}:= \langle \mu_a \mu_b \; | \; a,b \in V^* \rangle$, where 
  $V^* =  V \setminus U_{n+1}$.
\end{itemize}
Let $\tilde{\mathcal H}_0 = C_{\tilde{\cal H}}(V/U_{n+1})$, let  ${\cal H}  = \tilde{\cal H}/C_{\tilde{\cal H}}(V/U_{n+2})$ and let 
${\cal H}_0$ be the image of $\tilde{\mathcal H}_0$ in $\mathcal H$.
Then the following assertions hold. 
\begin{enumerate}
\item $[\tilde{\mathcal H}_0, U_{n+1}] \leq U_{n+2}$. %${\cal H}_0 \leq C_{\cal H}(U_{n+1}/U_{n+2})$.
\item ${\cal H}_0$ is a normal elementary-abelian $p$-subgroup of ${\cal H}$.
\item ${\cal H}/{\cal H}_0$ is cyclic of order $q_i-1$ if $p=2$ (resp. $\frac{q_i-1}{2}$ if $p \ne 2$).
\item For all $e \in V^*$ there is $a \in V^*$ such that the image of $\la \mu_e \mu_a \ra$ in $\mathcal H$ is a complement for $\mathcal H_0$.
%the image of $\mu_e \mu_a$ generates a complement of ${\cal H}_0$ in ${\cal H}$.
\end{enumerate}     
\end{lemma}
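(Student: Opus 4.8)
The plan is to analyze the action of $\tilde{\mathcal H}$ on the two-step filtration $V/U_{n+2}$ by comparing it with the local Moufang sets at the vertices $x_n$ and $x_{n+1}$, using the structural results on Hua groups proved so far (especially Lemma~\ref{root subgroup}, Lemma~\ref{structure of the Hua group} is being set up, Lemma~\ref{local situation}, and Proposition~\ref{Hua maps II}). First I would observe that $\tilde{\mathcal H}_0 = C_{\tilde{\mathcal H}}(V/U_{n+1})$ acts trivially on $V/U_{n+1}$; since $V/U_{n+1}$ surjects onto the root group of the local Moufang set at $x_n$ (by conditions (1) and (2)), the quotient $\tilde{\mathcal H}/\tilde{\mathcal H}_0$ embeds into the Hua group of that local Moufang set, which by Lemma~\ref{local situation} is the Hua group of $\mathbb M(\FF_{q_i})$, i.e. cyclic of order $q_i-1$ (for $p=2$) or $(q_i-1)/2$ (for $p\neq 2$); this is essentially statement (iii) modulo identifying $\tilde{\mathcal H}_0$ with the kernel, which requires knowing that the map $\tilde{\mathcal H}\to \mathcal H$ has image with the claimed quotient, i.e. that $\mathcal H_0$ is exactly the image of $\tilde{\mathcal H}_0$ in $\mathcal H$ — which holds by definition.

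For statement (i), the point is that an element $h\in\tilde{\mathcal H}_0$ fixes $V/U_{n+1}$ pointwise, and since $h$ is built from products $\mu_a\mu_b$ it normalizes $U_{n+1}$ (by Lemma~\ref{lem:reflection}, these elements fix $x_n$ hence act trivially on $\Sigma$, so preserve each $U_m$); I would then use the Hua-map identities of Proposition~\ref{Hua maps II}(ii) — which say precisely that $x h_a \equiv x h_{a+b}\bmod U_{m+1}$ for $x\in U_m$ — to show that the commutator $[h, U_{n+1}]$ lands in $U_{n+2}$. Concretely, writing $h$ as a product of $\mu_a\mu_b = h_a^{-1}h_b$ Hua maps with $a,b\in V^*\subseteq U_i\setminus U_{i+1}$, the hypothesis $h\in\tilde{\mathcal H}_0$ forces the leading-order ($U_{n+1}/U_{n+2}$, resp. coarser) action to be trivial, and Proposition~\ref{Hua maps II}(ii) controls how the $h_a$ differ at the next level, giving $[\tilde{\mathcal H}_0,U_{n+1}]\leq U_{n+2}$.

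Statement (ii) then follows quickly: since $\mathcal H = \tilde{\mathcal H}/C_{\tilde{\mathcal H}}(V/U_{n+2})$ acts on the two-step module $V/U_{n+2}$ with a distinguished submodule $U_{n+1}/U_{n+2}$ that it fixes pointwise (because $U_{n+1}/U_{n+2}$ is a quotient of the root group at $x_{n+1}$-type analysis and $\mathcal H_0$ by part (i) acts trivially on $U_{n+1}/U_{n+2}$ and trivially on $V/U_{n+1}$), $\mathcal H_0$ is a group of automorphisms of $V/U_{n+2}$ that are trivial on both $U_{n+1}/U_{n+2}$ and on the quotient $V/U_{n+1}$; such automorphisms form an abelian group naturally isomorphic to a subgroup of $\mathrm{Hom}(V/U_{n+1}, U_{n+1}/U_{n+2})$, and since $V/U_{n+1}$ and $U_{n+1}/U_{n+2}$ are elementary-abelian $p$-groups this Hom-group is elementary-abelian of exponent $p$; normality of $\mathcal H_0$ in $\mathcal H$ is inherited from normality of $\tilde{\mathcal H}_0 = C_{\tilde{\mathcal H}}(V/U_{n+1})$ in $\tilde{\mathcal H}$ (a centralizer of a characteristic section is normal).

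Finally, for statement (iv), fix $e\in V^*$; by the analysis of $\tilde{\mathcal H}/\tilde{\mathcal H}_0$ as the cyclic Hua group of $\mathbb M(\FF_{q_i})$ and by Lemma~\ref{root subgroup} (applied to the image of $V$ in the local Moufang set at $x_n$, which is $\mathbb M(\FF_{q_i})$), the elements $\mu_e\mu_a$ with $a$ ranging over $V^*$ already generate the full cyclic group $\la h\ra$ modulo $\tilde{\mathcal H}_0$; choosing $a$ so that $\mu_e\mu_a$ maps to a generator of $\mathcal H/\mathcal H_0$ (possible since $\mathcal H/\mathcal H_0$ is a quotient of the cyclic group $\tilde{\mathcal H}/\tilde{\mathcal H}_0$), the cyclic subgroup $\la \mu_e\mu_a\ra$ has image in $\mathcal H$ meeting $\mathcal H_0$ trivially: indeed $\mathcal H_0$ is a $p$-group by (ii) while the order of the image of $\mu_e\mu_a$ is prime to $p$ (it divides $q_i-1$, resp. $(q_i-1)/2$, which is prime to $p=\chara\FF_{q_i}$) — so its image is a complement to $\mathcal H_0$ in $\mathcal H$. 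I expect the main obstacle to be the bookkeeping in step (i): one must carefully track, using Proposition~\ref{Hua maps II}(ii) and the commutator/conjugation formulas of Lemma~\ref{mumaps}, how a product of Hua maps that is trivial at level $U_{n+1}/U_{n+2}$ on $V/U_{n+1}$ acts on $U_{n+1}$ modulo $U_{n+2}$, and to verify the congruences are preserved under the relevant products — the rest is comparatively formal group-theoretic packaging.
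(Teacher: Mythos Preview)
Your outline for parts (ii) and (iii) is essentially correct and matches the paper. There are, however, genuine gaps in your arguments for (i) and (iv).

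\medskip
\textbf{Part (i).} You correctly identify Proposition~\ref{Hua maps II}(ii) as the key input, but your plan does not explain how acting trivially on $V/U_{n+1}$ forces acting trivially on $U_{n+1}/U_{n+2}$. Proposition~\ref{Hua maps II} only tells you that the action of $h_a$ on $U_{n+1}/U_{n+2}$ depends on the coset $a+U_{n+1}$; it does not by itself say anything about an arbitrary word in the $h_a$'s that happens to lie in $\tilde{\mathcal H}_0$. The paper's argument is more delicate: one first chooses $a\in V^*$ so that $h_a=\mu_e\mu_a$ generates $\tilde{\mathcal H}/N_1$ (where $N_1=\tilde{\mathcal H}_0$), then verifies the identities $h_a^{2j}=h_{e h_a^{j}}$ and $h_a^{2j+1}=h_{a h_a^{j}}$ via Lemma~\ref{mumaps}. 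Together with $h_c^{-1}h_{c+b}\in N_1\cap N_2$ for $b\in U_{n+1}$ (from Proposition~\ref{Hua maps II}), this shows that $\tilde{\mathcal H}/(N_1\cap N_2)$ is cyclic, generated by $h_a$. Since $h_a^m\in N_1$ with $m=|\tilde{\mathcal H}/N_1|$, Lemma~\ref{mua=mub} forces $h_a^m=h_{\pm e+b}$ for some $b\in U_{n+1}$, and then Proposition~\ref{Hua maps II} gives $h_a^m\in N_2$. Comparing orders yields $N_1\leq N_2$. The use of Lemma~\ref{mua=mub} here is essential and is absent from your sketch; without it there is no mechanism linking the two layers of the filtration.

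\medskip
\textbf{Part (iv).} Your claim that the image of $\mu_e\mu_a$ in $\mathcal H$ has order dividing $q_i-1$ (resp.\ $(q_i-1)/2$), hence prime to $p$, is not justified and in fact need not hold for an arbitrary choice of $a$. What is true is that the image of $h_a=\mu_e\mu_a$ in $\mathcal H/\mathcal H_0$ has order dividing $m$; but $h_a^m$ lands in the $p$-group $\mathcal H_0$ and may well be non-trivial there, so the order of the image of $h_a$ in $\mathcal H$ can be $mp$ rather than $m$. The paper handles this by observing that, since $\mathcal H_0$ is elementary abelian, the image of $h_a^m$ has order $1$ or $p$; in the latter case one replaces $a$ by $e h_a$ (if $p=2$) or by $a h_a^{(p-1)/2}$ (if $p$ is odd), which has the effect of raising $h_a$ to the $p$-th power and thereby killing the unwanted $p$-part. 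A bare coprimality/Schur--Zassenhaus argument would produce \emph{some} complement, but not one of the required form $\langle\mu_e\mu_a\rangle$.
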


\begin{proof}
(i) 
Set $N_1 = \tilde{\mathcal H}_0 =  C_{\tilde{\cal H}}(V/U_{n+1})$ and $N_2 =C_{\tilde{\cal H}}(U_{n+1}/U_{n+2})$.  
Let $e \in V^*$. For $a \in V^*$  set $h_a =\mu_e \mu_a$. Note that $\tilde{\cal H}$ is generated by 
$\{h_a; a\in V^*\}$ since $\mu_a \mu_b = h_a^{-1} h_b$ for all $a,b \in V^*$. 
By Proposition~\ref{Hua maps II} we have $h_a^{-1} h_{a+b} \in N_1 \cap N_2$ for all
$a \in V^*$ and $b\in U_{n+1}$. 
In particular, given a subset  $X \subset V$ such that 
$\{0\} \cup X$ is a system of coset representatives for $U_{n+1}$ in $V$, then the quotient $\tilde{\mathcal H}/ N_1 \cap N_2$ is generated by the images of $\{h_x \; | \; x \in X\}$.

By hypothesis,  the map
$\varphi: V/U_{n+1} \to U_i/U_{i+1}:a+U_{n+1} \mapsto a+U_{i+1}$ is an isomorphism which is $\tilde{\cal 
H}$-equivariant. It follows that
$\tilde{\cal H}/N_1$ is cyclic of order $q_i-1$ if $p=2$ (resp. $\frac{q_i-1}{2}$ if $p$ is odd).
Thus there is $a \in V^*$ such that $\tilde{\cal H}/N_1$ is generated by $h_a N_1 $. For $m$ even, 
we have
\begin{align*}
h_a^m &=  (\mu_e \mu_a)^m 
\ = \  \mu_e \underbrace{\mu_a \mu_e \ldots \mu_a}_{m-1} \mu_e 
\underbrace{\mu_a \mu_e \ldots
 \mu_a}_{m-1} 
 \ = \ \mu_e \underbrace{\mu_a \mu_e \ldots \mu_a}_{m-1} \mu_e \mu_e \mu_e 
 \underbrace{\mu_a \mu_e \ldots
 \mu_a}_{m-1} \\
 &=     \mu_e h_a^{-\frac{m}{2}} \mu_e h_a^{\frac{m}{2}}
 \ = \  \mu_e \mu_{eh_a^{\frac{m}{2}}} 
 \ = \ h_{eh_a^{\frac{m}{2}}}.
\end{align*}
Similarly, for $m$ odd, we have
\begin{align*}
h_a^m &=   (\mu_e \mu_a)^m 
\ = \  \mu_e \underbrace{\mu_a \mu_e \ldots \mu_e}_{m-1} \mu_a 
\underbrace{\mu_e \ldots \mu_a}_{m-1} \\
& =    \mu_e h_a^{-\frac{m-1}{2}} \mu_a h_a^{\frac{m-1}{2}} 
\ = \  \mu_e \mu_{ah_a^{\frac{m-1}{2}}} 
\  = \ h_{ah_a^{\frac{m-1}{2}}}.
\end{align*}
From Lemma~\ref{local situation}, it follows that $U_i^*/U_{i+1} = (a+U_{i+1}) \tilde{\cal H} \cup 
(e+U_{i+1}) \tilde{\cal H}$. Transforming by $\varphi$, we deduce that $V^*/U_{n+1}= (a+U_{n+1}) 
\tilde{\cal H} \cup (e+U_{n+1}) \tilde{\cal          H}$. 
Now we set $m = | \tilde{\mathcal H}/N_1|$ (so $m=q_i-1$ if $p=2$ and $m=\frac{q_i-1}{2}$ if $p$ is odd) 
and $X= \{eh_a^j; j = 0, \ldots, m-1\} \cup \{ah_a^j; j=0, \ldots, m-1\}$. Hence $X \cup \{0\}$ is a system 
of coset representatives for $U_{n+1}$ in $V$. We have seen that 
$\tilde{\mathcal H}/ N_1 \cap N_2$ is generated by the images of $\{h_x \; | \; x \in X\}$, and we deduce  that  $\tilde{\mathcal H}/ N_1 \cap N_2$ is cyclic and generated by $h_a$. 
Moreover, since $h_a^m \in N_1$, by~\ref{mua=mub} there is $b \in U_{n+1}$ such that 
        $eh_a^{\frac{m}{2}} \in \{e+b,-e+b\}$ for $m$ even and $ah_a^{\frac{m -1} 2} \in \{e+b,-e+b\}$ for $m$ odd. Thus 
        $h_a^m  = h_{e+b}$ or $h_a^m  = h_{-e+b}$, and we deduce from Proposition~\ref{Hua maps II} that $h_a^m \in N_2$.     
Therefore, we have $N_1 \leq N_2$. This proves (i). 

\medskip \noindent
(ii) Clearly $\tilde{\mathcal H}_0$ is normal in $\tilde{\mathcal H}$, so that   ${\cal H}_0$ is normal in ${\cal H}$. Moreover, since 
        ${\cal H}_0$ centralizes $U_{n+1}/U_{n+2}$ by Part (i), the commutator map
        $$V/U_{n+1} \times {\cal H}_0 \to U_{n+1}/U_{n+2}:(a+U_{n+1},h) \mapsto [a,h] 
        +U_{n+2}$$ 
        is a homomorphism. Since the centraliser  $C_{{\cal H}_0}(V/U_{n+2})$   is trivial, this induces an injective homomorphism of $\mathcal H_0$ into the additive group of homomophisms from $ V/U_{n+1}$ to $U_{n+1}/U_{n+2}$. It follows  ${\cal H}_0$ is 
        an elementary-abelian         $p$-group.
        
\medskip \noindent
(iii)
With the notation of Part (i), we have ${\cal H}/{\cal H}_0 \cong \tilde{\cal 
        H}/N_1$, which is cyclic of 
        order $q_i-1$ for $p=2$ and $\frac{q_i-1}{2}$ if $p$ is even. 
        
\medskip \noindent
(iv) 
We have $h_a^m \in N_1 \cap N_2$, thus the image of $h_a^m$ in ${\cal H}$ lies in 
        ${\cal H}_0$. Hence $h_a^m C_{\tilde{\mathcal H}}(V/U_{n+2})$ has order $1$ or $p$ by (ii). In the latter 
        case we replace 
        $a$ by $eh_a$ if $p=2$, and by $ah_a^{\frac{p-1}{2}}$ if $p$ is odd. Now $h_a 
        C_{\tilde{\mathcal H}}(V/U_{n+2})$ has order $m$ and hence it therefore generates a complement for 
        ${\cal H}_0$ in ${\cal H}$.
\end{proof}

It is important to control the local action of the subgroup $\tilde{\mathcal H} \leq \HH$ afforded by Lemma~\ref{structure of the Hua group} on the local Moufang sets at $x_n$ and $x_{n+1}$. This is achieved by the next lemma, using a coordinatization of those local Moufang sets by the finite fields $\FF_{q_0}$ and $\FF_{q_1}$. 

\begin{lemma}\label{quadratic} 
Retain the notation and hypotheses of Lemma~\ref{structure of the Hua group}. 
Set $\FF=V/U_{n+1}$ and $\mathbf{E}=U_{n+1}/U_{n+2}$. 
Fix $e \in V^*$, and for each $a \in V^{\#}$, set  $h_a = \mu_e \mu_a$. Let also  $\varphi_a$ (resp. $\psi_a$) be the image of $h_a$ in  $End(\FF)$ (resp. $End(\mathbf{E})$), and define $\varphi_0$ (resp. $\psi_0$) as the   zero-map of $\FF$ (resp. $\mathbf{E}$). 
Then we have the following. 
\begin{enumerate}
\item $\varphi_a$ and $\psi_a$ depend only on the coset $a+U_{n+1}$. We may thus view $\varphi_a$ and $\psi_a$ as functions of $a \in \FF$. 

\item There is a multiplication $\cdot$ on $\FF$ such that $(\FF,+,\cdot)$ is a finite field 
with neutral element $e +U_{n+1}$, $a\mu_e +U_{n+1} = -(a +U_{n+1})^{-1}$ for all $a \in 
V^*$ and
$x\varphi_y =x \cdot y^2$ for all $x,y \in \FF$. Moreover, $\varphi_{x \cdot y} =
\varphi_x \varphi_y$ 
and $\psi_{x \cdot y} =\psi_x \psi_y$.

\item The map $\psi \colon \FF \to End(\mathbf E) : a\mapsto \psi_a$ is a multiplicative quadratic map.

\item There is a multiplication $\cdot$ on $\mathbf{E}$ such that $(\mathbf{E},+,\cdot)$ is a finite field
and such that there is an embedding $\iota$ from $\FF$ to the algebraic closure of $\mathbf{E}$ and 
a natural number $0 \leq s \leq \frac{r_i}{2}$ with 
$x\psi_y = x \cdot \iota(y)^{1 +p^s}$ for all $x \in \mathbf{E}$ and $y \in \FF$, where $r_i = \log_p q_i$.
\end{enumerate}
\end{lemma}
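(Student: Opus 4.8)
The plan is to unwind the four assertions in order, since each relies on the previous ones, and to lean heavily on the structural facts already assembled in Lemma~\ref{structure of the Hua group}, Lemma~\ref{root subgroup}, Proposition~\ref{Hua maps II}, and the classification of multiplicative quadratic maps (Theorem~\ref{thm:MQM} and Corollary~\ref{cor:MQM}).

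First, for assertion (i): the statement that $\varphi_a$ depends only on $a + U_{n+1}$ is essentially the content of Proposition~\ref{Hua maps II}(i), which says $x h_a \equiv x h_{a+b} \bmod U_{i+1}$ for $b \in U_{n+1}$ — after transporting through the $\tilde{\mathcal H}$-equivariant isomorphism $\varphi \colon V/U_{n+1} \to U_i/U_{i+1}$ used in Lemma~\ref{structure of the Hua group}, this says precisely that $h_a$ and $h_{a+b}$ induce the same endomorphism on $\FF = V/U_{n+1}$. Likewise the statement for $\psi_a$ is Proposition~\ref{Hua maps II}(ii), which gives $x h_a \equiv x h_{a+b} \bmod U_{m+1}$ for $x \in U_m$; specializing $m = n+1$ gives that $h_a$ and $h_{a+b}$ agree on $U_{n+1}/U_{n+2} = \mathbf E$. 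So (i) is really just a matter of phrasing.

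For assertion (ii): the key input is Lemma~\ref{root subgroup} applied to $V$ viewed as a root subgroup, together with Lemma~\ref{local Moufang sets II} and Lemma~\ref{local situation}, which identify the local Moufang set at $x_n$ with $\mathbb M(\FF_{q_i})$. Concretely, $\tilde{\mathcal H}/\tilde{\mathcal H}_0 \cong \mathcal H/\mathcal H_0$ is cyclic of order $q_i - 1$ (for $p=2$) or $(q_i-1)/2$ (for $p$ odd) by Lemma~\ref{structure of the Hua group}(iii), and it acts faithfully on $\FF$; I would use this to define the multiplication: pick $e$ as the identity, and for $a \in V^*$ define $a^{-1}$ via $a\mu_e + U_{n+1} = -(a+U_{n+1})^{-1}$ (the local-Moufang-set identity $\mu_e$-conjugation produces), then transport the field structure of $\mathbb M(\FF_{q_i})$ back to $\FF$. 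The formula $x\varphi_y = x \cdot y^2$ then follows because in $\mathbb M(\FF)$ the Hua map $h_a$ acts as multiplication by $a^2$, and the multiplicativity $\varphi_{x\cdot y} = \varphi_x\varphi_y$, $\psi_{x\cdot y} = \psi_x\psi_y$ comes from $h_{x\cdot y} = h_x h_y$ (again a property of $\mathbb M(\FF)$, pulled back) together with (i) so that everything is well-defined on cosets. I would also need to check that the $\psi$-action respects this product — i.e. that $h_{x\cdot y}$ and $h_x h_y$ induce the same endomorphism on $\mathbf E$ — which again follows from Proposition~\ref{Hua maps II}(ii) and the relation $h_{x\cdot y} \equiv h_x h_y$ modulo the relevant kernels.

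For assertions (iii) and (iv): I would verify the three axioms of a multiplicative quadratic map for $\psi \colon \FF \to \mathrm{End}(\mathbf E)$. Multiplicativity $\psi_{x\cdot y} = \psi_x\psi_y$ is from (ii). The condition $\psi(n) = n^2$ for $n \in \ZZ$ follows because $n\cdot e$ acts on everything as multiplication by $n$, and the Hua map of $n\cdot e$ is then multiplication by $n^2$. Biadditivity of $f(a,b) = \psi_{a+b} - \psi_a - \psi_b$ is the crucial point: this should come from Lemma~\ref{biadditivity}(i), which gives the biadditivity identity $h_{a+b,c} - h_{a,c} - h_{b,c} = h_{a,b+c} - h_{a,b} - h_{a,c}$ for the Hua maps $h_{a,b}$, combined with Lemma~\ref{Hua maps} (which controls how $h_{a,b}$ acts modulo $U_{m+1}$, i.e. on $\mathbf E$ when $m = n+1$). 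Once $\psi$ is known to be a multiplicative quadratic map, Corollary~\ref{cor:MQM} (or Theorem~\ref{thm:MQM}) yields a pair of field monomorphisms $\phi_1, \phi_2 \colon \FF \to \overline{\mathbf E}$ with $\psi_a = a^{\phi_1} a^{\phi_2}$; since $\FF$ and $\mathbf E$ are finite fields of the same characteristic $p$, these embeddings are Frobenius powers, so $\phi_2 = \phi_1 \circ \mathrm{Frob}^{s}$ for some $s$, and after identifying $\FF$ with its image $\iota(\FF) = \FF^{\phi_1}$ inside a field structure on $\mathbf E$ (extending $\mathbf E$'s own field structure so that it contains $\iota(\FF)$, or noting $\iota(\FF) \subseteq \mathbf E$ directly), we get $x\psi_y = x \cdot \iota(y)^{1+p^s}$. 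The bound $0 \le s \le r_i/2$ comes from the symmetry $f(a,b) = f(b,a)$ forcing $\{p^s, p^{r_i - s}\}$ to be the relevant exponents up to the degree of the extension, so $s$ can be chosen in the stated range; one must also rule out the degenerate characteristic-$2$ case (iii) of Theorem~\ref{thm:MQM}, where $\psi$ would be a monomorphism and hence $s = r_i/2$ is still covered, or handle it as the boundary of the interval.

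The main obstacle I expect is establishing the biadditivity of $\psi$ cleanly — that is, tracking all the congruences modulo $U_{n+2}$ carefully enough to deduce from Lemma~\ref{biadditivity} and Lemma~\ref{Hua maps} that $f(a,b) = \psi_{a+b} - \psi_a - \psi_b$ really is $\ZZ$-bilinear as an endomorphism of $\mathbf E$, rather than merely additive in one variable. A secondary subtlety is making the field structure on $\mathbf E$ compatible with the embedding $\iota$ of $\FF$: one needs the multiplication on $\mathbf E$ from part (iv) to be the one for which $\iota(\FF)$ is a subfield and for which $\psi_y$ is genuine scalar multiplication by $\iota(y)^{1+p^s}$, and reconciling this with the multiplication coming from the local Moufang set at $x_{n+1}$ (if one wants to identify $\mathbf E$ with $\FF_{q_{1-i}}$) requires a short argument using uniqueness in Corollary~\ref{cor:MQM}.
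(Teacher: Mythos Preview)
Your plan for parts (i)--(iii) is essentially correct and matches the paper's approach closely. For (i) you cite Proposition~\ref{Hua maps II}(ii) for $\psi_a$ where the paper cites Lemma~\ref{structure of the Hua group}(i), but these are equivalent here; you should also note separately that $\varphi_a = \psi_a = 0$ when $a \in U_{n+1}^\#$, since then $h_a$ shifts every $U_m$ deep enough to kill both quotients. For (ii) your sketch is right; the key point for $\psi_{x\cdot y} = \psi_x\psi_y$ is that $h_xh_yh_z^{-1}$ (with $\bar z = \bar x\cdot\bar y$) lies in $C_{\tilde{\mathcal H}}(V/U_{n+1})$, and Lemma~\ref{structure of the Hua group}(i) then forces it to centralise $U_{n+1}/U_{n+2}$ as well. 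For (iii), the biadditivity indeed comes from Lemma~\ref{Hua maps}, and the paper invokes an identity from \cite{DS3} for $\psi_{n\cdot e} = n^2\psi_e$.

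There is, however, a genuine gap in your plan for (iv). You propose to apply Theorem~\ref{thm:MQM} or Corollary~\ref{cor:MQM} directly to $\psi\colon \FF \to \mathrm{End}(\mathbf E)$, but both results require the codomain to be a \emph{commutative field}, and $\mathrm{End}(\mathbf E)$ is a full matrix ring over $\FF_p$, not a field. A priori nothing prevents $\psi_a$ from being an arbitrary $\FF_p$-linear endomorphism of $\mathbf E$ rather than multiplication by a scalar. Thus before the classification of multiplicative quadratic maps can be invoked, one must prove that the image of $\psi$ lies in a subfield of $\mathrm{End}(\mathbf E)$.

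This is exactly the substantial part of the paper's proof of (iv), and it uses ingredients you have not mentioned. The argument runs as follows: the image of $\tilde{\mathcal H}$ in $\Sym(x_{n+1}^\perp)$ normalises the Hua group of the local Moufang set at $x_{n+1}$, hence sits inside $N_{\mathrm P\Gamma\mathrm L_2(\mathbf E)}(\overline H) \cong \mathbf E^*\rtimes\Aut(\mathbf E)$. By Lemma~\ref{normalization} this image and $(\mathbf E^*)^2$ normalise each other, and since the image is cyclic (Lemma~\ref{structure of the Hua group}), Lemma~\ref{Galois automorphism} forces each $\psi_a$ to act as a genuine scalar of $\mathbf E$ \emph{unless} $\mathbf E\cong\FF_9$. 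In the exceptional case $\mathbf E\cong\FF_9$, Lemma~\ref{F9} is needed to produce an alternative field structure on $\mathbf E$ in which $\psi$ does land. Only after this is Theorem~\ref{thm:MQM} applied to $\psi\colon\FF\to\mathbf E$, and the adjustment of $\iota$ by a Frobenius power gives the bound $0\le s\le r_i/2$. Your ``secondary subtlety'' is thus not a compatibility issue but the heart of the matter.
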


\begin{proof}
(i) 
If $  \in V^*$, then Propostion~\ref{Hua maps II} ensures that $\varphi_a =\varphi_{a+b}$ for any $b \in U_{n+1}$. By 
Lemma~\ref{structure of the Hua group}(i), this also implies that   $\psi_a =\psi_{a+b}$. If $a\in U_{n+1}^\# $,
 then $U_m h_a \leq U_{m+2n+2-2i}$ for all $m \in \ZZ$, so 
$\varphi_a =0$ and $\psi_a =0$.

\medskip \noindent
(ii) 
Since the local Moufang set at $x_i$ is isomorphic to $\mathbb {M}(\FF_{q_i})$, we have
a multiplication $\cdot$ on $U_i/U_{i+1}$ with $(a+U_{i+1})\cdot (b +U_{i+1})^2 = 
  a h_b +U_{i+1}$ for all $a,b \in U_i$. Since $U_i/U_{i+1}$ and $\FF$ are isomorphic 
  $\tilde{\cal H}$-modules with $\tilde{\cal H}$ as in Lemma~\ref{structure of the Hua group}, 
  we get a multiplication $\cdot$ on $\FF$ with $a\varphi_b = a\cdot b^2$ for all
  $a,b \in \FF$. Hence $\FF \cong \FF_{q_i}$, and we have $\varphi_{a \cdot b} = \varphi_a \varphi_b$. 
  Since $a\mu_e =-a\mu_a \mu_e = -ah_a^{-1}$, we get
  $a\mu_e +U_{n+1} =-a h_a^{-1}+U_{n+1}=-(a+U_{n+1})^{-1}$ for all $a \in V^*$.
  
If $x,y,z \in V $ with $(x+U_{n+1}) \cdot (y+U_{n+1}) = z+U_{n+1}$, then 
  $h_x h_y h_z^{-1} \in C_{\tilde{\cal H} }(V/U_{n+1}) \leq C_{\tilde{\cal H}} (U_{n+1}/U_{n+2})$ 
  by Lemma~\ref{structure of the Hua group}(i). It follows that $\psi_{a\cdot b} =\psi_a \psi_b$ 
  for all $a,b \in \FF$.
  
 \medskip \noindent
(iii)  
%The local Moufang set at $x_{n+1}$ is isomorphic to $\mathbb {M}(\FF_{q_j})$, where $j = 1-i$. 
%We distinguish two cases. 
%Assume first that $q_j \neq 9$. By Lemma~\ref{commutation}, we may identify $\FF_{q_j}$ with $\EE$, thereby obtaining a multiplication $\cdot$ on $\EE$ such that  $x \psi_a = x \cdot \chi(a)$ for all $x \in \EE$ and $a \in \FF^\#$, where $\chi \colon \FF^\# \to \EE^\#$ is a homomorphism. In this case, the desired assertion holds with $s=0$.
%
%Assume now that $q_j = 9$. 
The map $\psi$ is multiplicative by Part~(ii). Moreover the map $f(a, b) = \psi_{a+b} - \psi_a -\psi_b$ is biadditive by Lemma~\ref{Hua maps}. For any $n \in \ZZ$, we also have that $\psi_{n \cdot e} = n^2 \psi_e$ by Proposition~4.6(6) from \cite{DS3}. The claim follows.  

\medskip \noindent
(iv)  By Lemma~\ref{local situation}, the elementary abelian group $\EE$ carries a multiplication $\cdot$ that turns it into a field (isomorphic to $\FF_{q_0} $ or $\FF_{q_1})$. We let  $\EE' $ be the subring of   $End(\mathbf E)$ defined by $\EE' = \{R_y \; | \; y \in \EE\}$, where $R_y \in End(\EE)$  denotes the right-multiplication by $y$. Thus $\EE'$ is a field which is isomorphic to $\EE$. 

We claim that $\psi \colon \FF \to End(\mathbf E)$ takes values in $\EE'$, unless   $\EE$ is isomorphic to $\FF_9$. 

To establish the claim, we consider  the little projective group of the local Moufang set at $x_{n+1}$, which we denote by $\overline{G}$. 
Hence $\overline{G} \cong \mathrm{PSL}_2(\EE)$. For each $a \in \FF$, we have $h_a \in \tilde{\mathcal H} \leq \HH_i$, with the notation of Lemma~\ref{structure of the Hua group}. The group $\HH_i$ is contained in the Hua group $\HH$, and therefore normalises both $U_\infty$ and $U_{\mathfrak{o}}$; moreover $\HH_i$ fixes the apartment $\Sigma$ pointwise. This implies that the image of $h_a$ in $\Sym(x_{n+1}^\perp)$ normalises a pair of opposite local roots groups at $x_{n+1}$. Therefore $h_a$ is contained in the normaliser $\Gamma = N_{\Sym(x_{n+1}^\perp)}(\overline G)$, which is isomorphic to $\mathrm P\Gamma \mathrm L_2(\EE)$. In fact we see moreover that $h_a$ is normalises the Hua subgroup $\overline H$ of $\overline G$ in $\Gamma$. 

Keeping the notation of Lemma~\ref{structure of the Hua group}, we see from that lemma that $ \tilde{\mathcal H}/\tilde{\mathcal H}_0$ is isomorphic to the Hua group of the local Moufang set at $x_i$, and is cyclic. Moreover Lemma~\ref{structure of the Hua group} ensures that $\tilde{\mathcal H}_0$ acts trivially on $U_{n+1}/U_{n+2}$, which implies  that $\tilde{\mathcal H}_0$ acts trivially on $x_{n+1}^\perp$. Therefore, the image of $\tilde{\mathcal H}$ in $\Sym(x_{n+1}^\perp)$, which we denote by $Y$, is cyclic. 

We conclude that $Y$ is a cyclic subgroup of $N_\Gamma(\overline H) \cong \mathbf{E} \rtimes \Aut(\mathbf{E})$. 
We now invoke Proposition~\ref{Hua maps II}, which ensures that the respective images of $\tilde{\mathcal H}$ and $\HH_i$ in $\Sym(x_{n+1}^\perp)$  coincide. Since the group $\HH_i $ and $\HH_{n+1}$ normalize each other by Lemma~\ref{normalization}, we infer that the subgroups $Y$ and    $(\mathbf{E}^*)^2$ normalize each other. Since $Y$ is abelian, it follows that $Y$ centralises every  element of $\Gamma$ which is a commutator of an element of $Y$ and an element of $(\mathbf{E}^*)^2$. In particular, any $h \in \tilde{\mathcal H}$ induces a Galois automorphism $\sigma$ of $\mathbf{E}$ such that 
$\sigma$ fixes $y^{-1} y^{\sigma}$ for all $y \in (\mathbf{E}^*)^2$. By Lemma~\ref{Galois automorphism} it follows 
that $\sigma$ is the identity unless $\EE \cong \FF_9$. This means precisely that if  $\EE \not \cong \FF_9$, then every element $h \in \tilde{\mathcal H}$ acts on $\EE$ as an element of $\EE'$.  The claim stands proven. 

\medskip
Assume now that $\psi$ does not take all its values in $\EE'$. Then $\EE \cong \FF_9$ by the claim. We then invoke Lemma~\ref{F9}, whose hypotheses are satisfied in view of Part~(iii) and Lemma~\ref{normalization}. This implies that $\FF \cong \FF_9$, and that $\{\psi_a \; | \; a \in \FF\}$ generates a field $\KK \subset End(\EE)$ isomorphic to $\FF_9$. Let $1_\EE$ denote the unit element of the field $\EE$, and consider the map 
$$f \colon \KK \to \EE : \alpha \mapsto 1_\EE \alpha.$$
The image $f(\KK)$ is an additive subgroup of $\EE$ containing $1_\EE$. The group of invertible 
endomorphisms of $\EE$ that preserve the cyclic subgroup $\la 1_\EE \ra$ is of order~$12$. It follows 
that the multiplicative group $\KK^*$, which has order~$8$, contains an element that does not 
preserve $\la 1_\EE \ra$. Therefore $f(\KK)$ is not contained in $\la 1_\EE \ra$, so that $f$ is 
surjective. Hence $f$ is bijective. Notice moreover that $f$ induces an isomorphism of additive 
groups from $\KK$ to $\EE$. We now endow  $\EE$ with a new multiplication, which is the unique 
multiplication that makes $f$ an isomorphism of fields. With respect to this new multiplication, 
which will again be denoted by the same symbol $\cdot$, we see that the subring $\EE'' = \{R_y \; | 
\; y \in \EE\}  \subset End(\EE)$ coincides with $\KK$. Moreover, the multiplicative quadratic map  
$\psi$ takes values in $\EE''$ by construction. 

\medskip
At this point, we conclude that there are only two possibilities: either the multiplicative quadratic map $\psi$ takes values in the field $\EE'$, which is isomorphic to $\EE$, or in the field $\EE''$, which is also isomorphic to $\EE$. Moreover, we have canonical isomorphisms $\EE' \to \EE$ and $\EE'' \to \EE$ given by the map $R_y \mapsto y$. We shall now treat both cases simultaneously, by identifying $\EE'$ (resp. $\EE''$) with $\EE$ via that map. With this identification, we obtain in either case that $x \psi_y = x \cdot \psi_y$ for all $x \in \EE$ and $y \in \FF$, where $\cdot$ denote the multiplication of $\EE$, as constructed above in the two different cases respectively. 

We now apply Theorem~\ref{thm:MQM} to the multiplicative quadratic map $\psi \colon \FF \to \EE$. This yields an embedding $\iota:\FF \to \overline{\mathbf{E}}$ and two natural numbers $1 \leq k,l \leq r_i$ such that 
  $\psi_x$ coincides with the right multiplication by $\iota(x^{p^k+p^l})$ for all $x \in \FF$. (We recall from (ii) that $\FF$ has order $q_i = p^{r_i}$.) If $l-k \leq \frac{r_i}{2}$, we replace 
  $\iota$ by $\iota^{\prime} \colon \FF \to \overline{\mathbf{E}}: a\mapsto \iota(a^{p^{r_i-k}})$. If 
  $l-k > \frac{r_i}{2}$, we replace $\iota$ by $\iota^{\prime}\colon  \FF \to \overline{\mathbf{E}}:
  a \mapsto \iota(a^{p^{r_i-l}})$. In both cases we conclude that $\psi_x$ coincides with the right multiplication by $\iota(x)^{1 +p^s}$ with $0 \leq s \leq \frac{r_i}{2}$.
\end{proof}

 \begin{prop}\label{construction of root groups}
 Assume that $T$ is locally finite and that $U$ is abelian of exponent $p$ for some prime $p$. 
 Let $X$ be a finite solvable $p^{\prime}$-subgroup of the Hua group $\HH$, and let $i \in \{0,1\}$. 
 Then there is a descending chain of subgroups $U_i = V_i \geq V_{i+1} \geq \ldots$  satisfying the following conditions for all $n \geq i$. 
 \begin{enumerate}
 \item $V_n = V_{n+1} +U_{n+1}$.
 
 \item $U_{n+1} \cap V_{n+1} =U_{n+2}$.
 
 \item There is $e_n \in V_n \setminus U_{n+1}$ such that $a \mu_{e_n} \in V_n$ for all $a \in 
 V_n^* :=V_n \setminus U_{n+1}$. 
 
 \item  $V_n$ is invariant under $X$ and $H_n=\langle  \mu_a \mu_b \; | \; a,b \in V_n^* \rangle$. 

\end{enumerate}
\end{prop}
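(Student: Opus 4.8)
I would argue by induction on $n\geq i$, taking $V_i=U_i$ as the starting term. For the base case one has to check (iii) and (iv) with $n=i$: here $V_i^*=U_i\setminus U_{i+1}$ and $H_i=\HH_i$, so any $e_i\in U_i\setminus U_{i+1}$ satisfies (iii) because $\mu_{e_i}$ maps $U_i\setminus U_{i+1}$ into itself by Lemma~\ref{lem:reflection}; for (iv) note that $X$, being a \emph{finite} subgroup of $\HH=G_{\mathfrak{o},\infty}$, consists of elliptic automorphisms fixing $\mathfrak{o}$ and $\infty$, hence fixing $\Sigma$ pointwise, so $X$ (like $\HH_i$) normalises $U_\infty$ and fixes $x_i$, and therefore normalises $V_i=U_i$.

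For the inductive step, assume $V_i\geq\dots\geq V_n$ are already in hand. Using (i) and (ii) for the lower indices, a short descending induction along the chain yields $V_n\cap U_{i+1}=U_{n+1}$ and $V_n+U_{i+1}=U_i$, so that $V_n/U_{n+1}\to U_i/U_{i+1}$ is an isomorphism and $V_n$ fulfils hypotheses (1)--(3) of Lemma~\ref{structure of the Hua group} with $\tilde{\cal H}=H_n$. I would then apply Lemmas~\ref{structure of the Hua group} and~\ref{quadratic} to $V_n$: writing $M=V_n/U_{n+2}$, $\EE=U_{n+1}/U_{n+2}$ and $\FF=V_n/U_{n+1}\cong\FF_{q_i}$, this produces field structures on $\EE$ and $\FF$, the multiplicative quadratic map $\FF\to\mathrm{End}(\EE)$ encoding the $H_n$-action on $\EE$, and a normal elementary abelian $p$-subgroup ${\cal H}_0\trianglelefteq{\cal H}=H_n/C_{H_n}(M)$, acting on $M$ by transvections valued in $\mathrm{Hom}(\FF,\EE)$, with ${\cal H}/{\cal H}_0$ cyclic of order prime to $p$.

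The decisive point is the construction of $V_{n+1}$. First I would observe that $X$ normalises $H_n$: since $X$ fixes $\Sigma$ pointwise and stabilises $V_n$ (by (iv)), and $\mu_a^x=\mu_{a^x}$ for $x\in\HH$ by Lemma~\ref{mumaps}(ii), $X$ permutes $\{\mu_a\mid a\in V_n^*\}$. Hence $\la X,H_n\ra$ acts on $M$ preserving $\EE$, and its image $\bar G\leq\Aut(M)$ has a normal $p$-subgroup $P$ — containing ${\cal H}_0$ and all transvections of $M$ — with $\bar G/P$ a $p'$-group, generated by the images of $X$ and of ${\cal H}/{\cal H}_0$. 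Using Schur--Zassenhaus I would choose a complement $C$ of $P$ in $\bar G$ containing the image of $X$, and then, by Maschke's theorem, a $C$-invariant complement $W$ of $\EE$ in $M$; let $V_{n+1}\leq V_n$ be the preimage of $W$. Then $U_{n+2}\leq V_{n+1}$, $V_{n+1}\cap U_{n+1}=U_{n+2}$ and $V_{n+1}+U_{n+1}=V_n$, which are (i) and (ii) for index $n$, and $V_{n+1}$ is $X$-invariant. For (iii), let $e_{n+1}\in V_{n+1}\setminus U_{n+2}$ be a lift of $1\in\FF$; since $W$ is stable under $C$, which covers the semisimple part ${\cal H}/{\cal H}_0$, the field structure supplied by Lemma~\ref{quadratic} forces $\mu_{e_{n+1}}$ (which descends modulo $U_{n+2}$ on $M\setminus\EE$) to map $W\setminus\{0\}$ to itself by the inversion map of $\FF\cong\FF_{q_i}$; equivalently $\mu_{e_{n+1}}$ carries $V_{n+1}\setminus U_{n+2}$ into $V_{n+1}$, and $(V_{n+1}/U_{n+2},\overline{\mu_{e_{n+1}}})\cong\mathbb M(\FF_{q_i})$. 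Finally, $H_{n+1}=\la\mu_a\mu_b\mid a,b\in V_{n+1}\setminus U_{n+2}\ra\leq H_n$ fixes $x_{n+2}$, so it normalises $U_{n+2}$ and $\EE$; and since $V_{n+1}/U_{n+2}$ is now a root subgroup, each $\mu_a$ with $a\in V_{n+1}\setminus U_{n+2}$ preserves it, hence so does each generator $\mu_a\mu_b$ of $H_{n+1}$, and $H_{n+1}$ stabilises $V_{n+1}$. This yields (iv) and completes the induction.

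The step I expect to be the main obstacle is exactly this construction of $W$: one needs a \emph{single} complement of $\EE$ in $M$ which is at once $X$-invariant, a root subgroup (so that (iii) holds), and — as a consequence — $H_{n+1}$-invariant. The genuine difficulty is the transvection subgroup ${\cal H}_0$, which makes it impossible to keep $V_{n+1}$ invariant under all of $H_n$; the resolution is that once $V_{n+1}/U_{n+2}$ is chosen to meet $\EE$ trivially and to carry a field structure, the generators of $H_{n+1}$ act on it through $(\FF_{q_i}^*)^2$ and never see the transvections, while compatibility with $X$-invariance is saved by the fact that $X$ normalises $H_n$, so that one Schur--Zassenhaus–Maschke choice of $W$ does everything at once.
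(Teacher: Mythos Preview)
Your inductive framework and the use of Lemma~\ref{structure of the Hua group}, Schur--Zassenhaus/Hall, and Maschke to produce an $X$-invariant complement $V_{n+1}$ of $U_{n+1}$ in $V_n$ (modulo $U_{n+2}$) all match the paper's approach, and your reduction of (iv) to (iii) is correct.

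The genuine gap is at (iii). You assert that ``the field structure supplied by Lemma~\ref{quadratic} forces $\mu_{e_{n+1}}$ to map $W\setminus\{0\}$ to itself by the inversion map''. But Lemma~\ref{quadratic}(ii) only says $a\mu_e + U_{n+1} = -(a+U_{n+1})^{-1}$, i.e.\ inversion holds \emph{modulo $U_{n+1}$}, not modulo $U_{n+2}$. The complement $C$ is indeed generated (on the ${\cal H}$-side) by the image of some Hua element $h$ that preserves $W$; but by Lemma~\ref{structure of the Hua group}(iv) and conjugacy of complements, one can only arrange $h=\mu_{e+x}\mu_{a+y}$ for some $e,a\in V_{n+1}$ and \emph{error terms} $x,y\in U_{n+1}$ that need not vanish. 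Knowing that $\mu_{e+x}\mu_{a+y}$ preserves $W$ does not imply that $\mu_e$ does: the discrepancy between $\mu_e$ and $\mu_{e+x}$ lives precisely in the transvection subgroup ${\cal H}_0$, which by construction does \emph{not} preserve $W$.

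In the paper this is the heart of the proof and occupies several pages. For $\tau=\mu_e$ with $e\in V_{n+1}$ one computes explicitly
\[
\bar b\,\tau \;=\; -\bar b^{-1} \;-\; 2\bar z_{\bar b}\,\iota(\bar b)^{-\frac{1+r}{2}} \;+\; \bar x\,\iota(\bar b^{-r}+\bar b^{-1})
\]
for all $\bar b\in\FF^\#$, where $\bar x,\bar z\in\EE$ encode the errors $x,y$ and $r=p^s$ comes from Lemma~\ref{quadratic}(iv). The identity $\tau=\alpha_e\alpha_e^\tau\alpha_e$ then converts this into a polynomial relation $f(\bar b)=\bar 0$ holding for all $\bar b\in\FF\setminus\{-\bar e,\bar 0\}$, and a degree count and case analysis (separating $p=2$ from $p$ odd, $r=1$ from $r\geq 3$, with special treatment of $q_i=9$) forces $\bar x=\bar z=\bar 0$ unless $r=1$; in either event $\bar b\tau=-\bar b^{-1}$ and $\tau$ preserves $W$. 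Your sketch omits this entire computation, and it cannot be replaced by the formal observation that $W$ is $C$-invariant.
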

 
\begin{proof}
We work by induction on $n$. For the base case, we remark that  $V_i =U_i$ satisfies trivially (i) and (ii), while (iii) and (iv) are ensured by  Lemma~\ref{lem:reflection}. 

Assume now that we   have constructed $V_i \geq \dots \geq V_n$ for $n\geq i$ with the requested properties. Set $\overline{X}= X/C_X(V_n/U_{n+2})$, 
 ${\cal H} = H_n/C_{H_n}(V_n/U_{n+2})$ and 
 ${\cal H}_0 =  C_{\cal H}(V_n/U_{n+1})$. Since $X$ normalizes $V_n$, it follows that $\overline{X}$, regarded as 
 a subgroup of $\Aut(V_n/U_{n+2})$,  normalizes ${\cal H}$. We form the semi-direct product $\mathcal H \rtimes \overline X$.  
 
 Applying Lemma~\ref{structure of the Hua group} with 
 $V=V_n$,  we see  that ${\cal H}_0$ is a normal elementary-abelian $p$-group of 
 ${\cal H}$ and ${\cal H}/{\cal H}_0$ is cyclic of order $q_i-1$ if $p=2$ (resp.  
 $\frac{q_i-1}{2}$ if $p$ is odd). Therefore $\mathcal H \rtimes\overline X$ is a finite solvable group. By Hall's Theorem, the $p'$-subgroup $X$ is contained in a Hall $p'$-subgroup, which is a complement for $\mathcal H_0$ in $\mathcal H \rtimes\overline X$.  This implies that there is a complement  $\overline{H}$   of ${\cal H}_0$ in  ${\cal H}$ which is normalised by $\overline X$. 
 Let $H$ be the preimage of $\overline{H}$ in 
 $H_n$. 
 
 Since $\overline{ H}\, \overline{X}$ is a $p^{\prime}$-subgroup of $\Aut(V_n/U_{n+2})$ 
 which stabilizes $U_{n+1}/U_{n+2}$, we deduce from Maschke's theorem that there is an $HX$-invariant subgroup $V_{n+1}$ of $V_n$ 
 with $V_n =V_{n+1} + U_{n+1}$ and $U_{n+1} \cap V_{n+1} = U_{n+2}$. 
 
 It remains to check the property (iii). Indeed, if we find   $e \in V_{n+1} \setminus U_{n+2}$ satisfying  $b\tau \in V_{n+1} \setminus U_{n+2}$ for all $b \in 
 V_{n+1} \setminus U_{n+2}$, where $\tau = \mu_e$, then  we also have
 $bh_c = ((b\tau +c)\tau -c\tau)\tau +c \in V_{n+1} \setminus U_{n+2}$ for 
 all $b,c \in V_{n+1} \setminus U_{n+2}$ (see Definition~3.4 in \cite{DS1}). Hence (iv) follows from (iii).

Since all the complements of ${\cal H}_0$ in ${\cal H}$ are conjugate, we deduce from Lemma~
\ref{structure of the Hua group}(iv) there exist elements $a,e \in V_{n+1} \setminus U_{n+2}$ 
and $x,y \in U_{n+1}$ such that $\overline{H}$ is generated by the image
of $h= \mu_{e+x} \mu_{a+y}$. Those elements $a, e, x, y$ are fixed for the rest of the proof. Moreover, we set $\tau = \mu_e$. 

Given $b \in V_n$, we set $\overline{b}=b +U_{n+2}$. Moreover, we set $\FF=V_{n+1}/U_{n+2}$
Since $\FF$  and $V_n/U_{n+1}$ are isomorphic $H$-modules, by 
Lemma~\ref{quadratic}(ii), there is a multiplication $\cdot$ on $\FF$ such that $(\FF,+,\cdot)$ is a field
with neutral element $\overline{e}$, $-\overline{b}^{-1} \equiv \overline{b\tau} \mod  U_{n+1}/U_{n+2}$ 
and $\overline{bh_c} \equiv \overline{b} \cdot \overline{c}^2 \mod  U_{n+1} /U_{n+2}$ for
all $b,c \in V_{n+1}, b\not\in U_{n+1}$.

Since $H_n$ normalizes $V_n$ we have $b\tau = -b\mu_b \tau = -bh_b^{-1}  \in V_n\setminus U_{n+1}$ 
for $b \in V_n \setminus U_{n+1}$. Therefore by Lemma~\ref{tau} the map $\tau$ induces a permutation on 
$(V_n/U_{n+2}) \setminus (U_{n+1}/U_{n+2})$ which we will also call $\tau$. 

We will prove the following claim:
$$\bar b \tau = - \bar b^{-1} \hspace{1cm} \text{for all } b \in V_{n+1} \setminus U_{n+1}.$$
As observed above, this claim implies that $V_{n+1}$ is invariant under $H_{n+1}$, which completes the proof of the proposition.

Since $h$ and $h_a =\mu_e \mu_a$ induce the same map on 
$V_n/U_{n+1}$, the element $\overline{a}$ generates the multiplicative group 
of $\FF$. Since $U_i/U_{i+1}, V_n/U_{n+1}$ and $\FF$ are isomorphic $H$-modules, $H$ has at most
two orbits on $\FF \setminus \{\overline{0}\}$ (namely the squares and the non-squares) with 
representatives $\overline{e}$ and 
$\overline{a}$. Therefore, in order to compute $\bar b \tau$ for all $\bar b \in \FF^\#$, it suffices to 
compute $e h^j \tau$ and $a h^j \tau$ for all $j \geq 0$.

We have the following sequence of congruences modulo $U_{n+2}$:
\begin{align*}
eh^j\tau  &\equiv  e\mu_{e+x}^2 h^j \mu_{e+x}^2 \tau \\
&\equiv  (e+x-x)\tau^2 \mu_{e+x} h^{-j} \mu_{e+x} \tau  \hspace{2cm} \text{because $\mu_{e+x}$ inverts $h$}\\
&\equiv  ((e+x)\tau-xh_{e+x}^{-1})h_{e+x}  h^{-j} h_{e+x}^{-1}   \hspace{2cm} \text{by Lemma~\ref{tau}
}\\
& \equiv (-(e+x)-x)h^{-j} h_{e+x}^{-1} \hspace{2cm}\text{by Lemma~\ref{mumaps}(iv) } \\ 
&\equiv  (-e-2x)h^{-j} h_{-e+x} \hspace{1cm} \text{because $(e+x)\tau \equiv -e +x \mod U_{n+2}$ by 
Lemma~\ref{tau}}\\
& \equiv   -eh^{-j}h_{e-x} -2x h^{-j} \hspace{2cm} \text{by  Proposition~\ref{Hua maps II}
and Lemma ~\ref{mumaps}(i)}\\
&\equiv   -eh^{-j}+x h_{-eh^{-j},e\tau} h_e -2x h^{-j} \hspace{1cm} \text{by  Lemma~\ref{Hua maps} and because $e h^{-j}h_x \in U_{n+2}$}\\
%&\equiv  -eh^{-j} -2x h^{-j} +x h_{-eh^{-j},-e} \\
&\equiv   -eh^{-j} -2x h^{-j}  +x h_{eh^{-j},e}.  
\end{align*}

Set $\EE=
U_{n+1}/U_{n+2}$.   
By  Lemma~\ref{quadratic}(iv), there is a multiplication $\cdot$ on $\EE$ such that 
$(\EE,+,\cdot)$ is a field and such that there is a natural number $0 \leq s \leq 
\frac{r_i}{2}$ and an embedding $\iota$ from $\FF$ to the algebraic 
closure of $\EE$ satisfying $\overline{ch_b} =\overline{c}\cdot \iota(\overline{b})^{1+r}$ 
for $r=p^s$.
Since $\bar e h^j  = \bar a^{2j}$, we deduce from the computation above that 
$$\overline{a}^{2j} \tau =-\overline{a}^{-2j} -2\ox 
\iota(\overline{a})^{-2j(1+r)}+\ox\iota(\oa^{-2jr}+\oa^{-2j} ).$$

We now compute a similar formula for the non-squares of $\FF$. 
Notice that $\mu_{a+y}h^j \mu_{e+x} =h^{-1-j}$. Therefore, by a similar argument, we obtain  the following 
sequence of congruences modulo $U_{n+2}$:
\begin{align*}
a h^j \tau 
&\equiv  (a+y-y)  \mu_{a +y}^2 h^j \mu_{e+x} ^2 \tau \\
&\equiv (a+y-y)\tau h_{a+y} h^{-j-1} \mu_{e+x} \tau \hspace{2cm} \text{since $\mu_{a+y}h^j \mu_{e+x} 
=h^{-1-j}$}\\
& \equiv ((a+y)\tau  -yh_{a+y}^{-1}) h_{a+y} h^{-j-1}h_{e+x}^{-1} \hspace{2cm} \text{by Lemma ~\ref{tau}} 
\\
& \equiv ( (a+y) \mu_{a+y} -y )h^{-j-1} h_{e+x}^{-1} \\
&\equiv  (-a-y -y ) h^{-j-1} h_{e+x}^{-1} \hspace{2cm} \text{by Lemma~\ref{mumaps}(iv)} \\
&\equiv   -a h^{-j-1} h_{e+x}^{-1} -2yh^{-j-1} \hspace{2cm} \text{by Proposition 
~\ref{Hua maps II}}\\ 
&\equiv  -a h^{-j-1} h_{-e+x} -2y h^{-j-1} \hspace{2cm}
 \text{because $(e+x)\tau \equiv -e +x \mod U_{n+2}$ by ~\ref{tau}}  \\
 & \equiv -ah^{-j-1} h_{e-x} -2yh^{-j-1} \hspace{2cm} \text{by Lemma \ref{mumaps}(i))}\\
&\equiv   -a h^{-j-1} +x h_{-a h^{-1-j},e\tau}h_e -2y h^{-j-1} \hspace{0.5cm} \text{by  
Lemma ~\ref{Hua maps} and because $e h^{-j}h_x \in U_{n+2}$} \\
&\equiv  -ah^{-j-1} +x h_{-ah^{-j-1},-e}-2y h^{-j-1} 
\end{align*}
Since $\bar a h^j  = \bar a^{2j+1}$, this implies  
$$\overline{a}^{2j+1}\tau =-\overline{a}^{-2j-1} -2\oz \iota(a)^{-(2j+1)\frac{1+r}{2} }+ 
\ox\iota(\overline{a}^{-(2j+1)r} +\overline{a}^{-(2j+1)})$$
where $\oz = \oy\iota(\overline{a})^{-\frac{1+r}{2}}$.

Combining the two conclusions of those two computations, we deduce that for any  $\ob \in \FF \setminus \{\overline{0}\}$, we have
\renewcommand{\theequation}{$*$}
\begin{equation}\label{eq:*}
\overline{b}\tau = -\overline{b}^{-1} -2
\oz_{\ob} \iota(\ob)^{-\frac{1+r}{2}} +\overline{x}\iota(\ob^{-r} +\ob^{-1}),
\end{equation} 
where  $\oz_{\ob}=\ox$ if $\ob$ is a square in $\FF$, 
and $\oz_{\ob}=\oz$ if $\ob$ is a non-square in $\FF$.

\medskip
Since $(b+c)\tau \equiv b\tau +ch_b^{-1} \mod  U_{n+2}$ for $b \in V_{n+1}^*$ and 
$c \in U_{n+1}$ by Lemma~\ref{tau}, we have
$$
(\ob +\overline{c})\tau =-\overline{b}^{-1}-2z_b
\iota(\overline{b})^{-\frac{1+r}{2}}+\overline{x}
\iota(\overline{b}^{-1}+\overline{b}^{-r})+\overline{c}\iota(\overline{b})^{-1-r}.
$$
By \cite[Proposition~4.1.1]{DS1}, we have $\tau =\mu_e =\alpha_e \alpha_{-e\tau^{-1}}^{\tau} \alpha_{-(-e\tau^{-1})\tau}
= \alpha_e \alpha_e^{\tau} \alpha_e$. In particular,  for $b \in V_{n+1}^*$, we have $\bar b \tau =   ((\ob +\ove)\tau +\ove)\tau +\ove$. Therefore, for any   $b \in V_{n+1}$ with $\ob \ne -\ove, \overline{0}$, we obtain:
\begin{align*}
\lefteqn{-\ob^{-1}  -  2z_{\ob} \iota(\ob)^{-\frac{1+r}{2}} +\ox (\iob^{-r}+\iob^{-1})} \qquad \qquad \\
&=   \ob \tau \\
& =  ((\ob +\ove)\tau +\ove)\tau +\ove\\
& =  (-(\ob +\ove)^{-1}-2\oz_{\ob+\ove}\iota(\ob+\ove)^{-\frac{1+r}{2}}+\ox
(\iota(\ob+\ove)^{-r}+\iota(\ob+\ove)^{-1}) +\ove)\tau +\ove \\
& = 
(\ob(\ob +\ove)^{-1}  -2\oz_{\ob+\ove}\iota(\ob+\ove)^{-\frac{1+r}{2}}+\ox
(\iota(\ob+\ove)^{-r}+\iota(\ob+\ove)^{-1}))\tau +\ove \\
&=  -\ob^{-1}(\ob +\ove) -2\oz_{\ob+\ove}\iota(\ob+\ove)^{-\frac{1+r}{2}} \iota(\ob)^{-1-r} 
\iota(\ob +\ove)^{1+r} \\
& \qquad + \ox(\iota((\ob +\ove)^{-r}  +(\ob +\ove)^{-1} )) \iota(\ob)^{-1-r}\iota(\ob +\ove)^{1+r} \\
& \qquad -2\oz_{\ob(\ob +\ove)}\iota(\ob)^{-\frac{1+r}{2}} \iota(\ob +\ove)^{\frac{1+r}{2}} 
 +\ox \iota(\ob^{-r}((\ob +\ove)^r + \ob^{r-1}(\ob +\ove )))+\ove \\
&=  -\ob^{-1} -2\oz_{\ob +\ove} \iota(\ob +\ove)^{\frac{1+r}{2}} \iota(\ob)^{-1-r} + \ox \iota(2\ob^{1+r} +2\ob +2\ove +2\ob^r) \iota(\ob)^{-1-r} \\
& \qquad - 2\oz_{\ob(\ob +\ove)} \iota(\ob)^{-\frac{1+r}{2}} \iota(\ob +\ove)^{\frac{1+r}{2}}. 
\end{align*}
It follows that 
\begin{align*}
f(\ob) &=\ox\iota(2\ob^{1+r} +\ob^r + \ob + 2\ove) + 2\oz_{\ob}\iota(\ob)^{\frac{1+r}{2}} -
2\oz_{\ob +\ove}\iota(\ob +\ove)^{\frac{1+r}{2}}-2\oz_{\ob(\ob +\ove)}\iota(\ob)^{\frac{1+r}{2}} 
\iota(\ob +\ove)^{\frac{1+r}{2}}\\
& =\overline{0}
\end{align*}
for all $\ob \in \FF \setminus \{-\ove,\overline{0} \}$. 

\medskip
Assume that $\chara(\FF) =2$. If $\bar b = \ove = -\ove$, then $\ob\tau =
 -\ob^{-1}$ by (\ref{eq:*}). Otherwise we have $\ob \in \FF \setminus \{-\ove,\overline{0} \}$, and  we get 
$\ox (\iob +\iob^r) = \overline{0}$ from the equation above, so that  $r=1$ or $\ox=\overline{0}$. 
In both cases we have $\ob\tau =
 -\ob^{-1}$ by (\ref{eq:*}), and the desired claim follows. 
 
\medskip 
We assume  henceforth that  $\chara(\FF) \ne 2$. We shall prove that $\ox = \oz$, and that the common value is $\bar 0$ unless $r=1$. In either case, the desired equation $\bar b \tau = - \bar b^{-1}$ follows readily from (\ref{eq:*}), which finishes the proof. 

%Assume  first that $\oz \ne \ox$. 

As a first case, assume that $r=1$. If $\FF = \FF_3$, we apply the equation $f(\ob) = \bar 0$ with $\ob = \ove$; it yields $\ox = \oz$, as claimed. Otherwise, there exists a non-square $\ob$ in $\FF \setminus \{-\ove,\overline{0} \}$. In that case, the equation $f(\ob) = \bar 0$ yields
 $$-2\oz_{\ove+\ob} \iota(\ove+\ob) +\ox\iota(2\ove+2\ob+2\ob^2)-2\oz_{\ob(\ove+\ob)} \iota(\ob^2+\ob)+
  2\oz \iob=\overline{0}.$$
Therefore, if $\ob+\ove$ is a square, we get 
 $ 2\ox \iob^2-2\oz \iob^2=0$, and thus $\ox=\oz$, as claimed. Similarly, if 
 $\ob+\ove$ is a non-square, we get $2\ox \iota(\ove +\ob) - 2 \oz \iota(\ove + \ob)= \bar 0$, and again $\ox=\oz$, as claimed. 
 
We now consider the remaining case $r \geq 3$. The goal is then to show that $\ox=\oz= \bar 0$.

Suppose for a contradiction that $\ox \neq  \bar 0$. Let  $1=\iota(\ove)$ denote the unit element of $\EE$, and $Y$ be an indeterminate. We define four polynomials $f_1(Y), \dots, f_4(Y) \in \EE[Y]$ as follows. The polynomial  $f_1(Y)$ is obtained by replacing $\iob$ by $Y$, and   $\oz_{\ob},\oz_{\ove+\ob}$ and $\oz_{\ob(\ove +\ob)}$ by $\ox$. The polynomial  $f_2(Y)$ is obtained by replacing    $\oz_{\ob}$ by $\ox$, and $\oz_{\ove+\ob}$ and $\oz_{\ob(\ove+\ob)}$ by $\oz$. The polynomial  $f_3(Y)$ is obtained by replacing    $\oz_{\ob}$ and $\oz_{\ob(\ove+\ob)}$ by $\oz$, and $\oz_{\ove+\ob}$ by $\ox$. The polynomial  $f_4(Y)$ is obtained by replacing $\oz_{\ob}$ and 
 $\oz_{\ove+\ob}$ by $\oz$ and $\oz_{\ob(\ove+\ob)}$ by  $\ox$. One computes that:
\begin{align*}
f_1(Y) &= \ox \bigg(  2 Y^{r+1} + Y^r + Y + 2 - 2(Y^2 + Y)^{\frac{1+r} 2} - 2 (Y + 1)^{\frac{1+r} 2} +2 Y^{\frac{1+r} 2}  \bigg),\\
f_2(Y) & =  \ox \bigg(   2 Y^{r+1} + Y^r + 2 Y^{\frac{1+r} 2}+  Y + 2\bigg) -2 \oz   (Y + 1)^{\frac{1+r} 2} (Y^{\frac{1+r} 2}  +1), \\
f_3(Y) & =  \ox \bigg(   2 Y^{r+1} + Y^r - 2 (Y+1)^{\frac{1+r} 2}+  Y + 2\bigg) -2 \oz  Y^{\frac{1+r} 2}  \bigg( (Y + 1)^{\frac{1+r} 2} -1\bigg), \\
f_4(Y) &= \ox \bigg(   2 Y^{r+1} + Y^r - 2 (Y^2+Y)^{\frac{1+r} 2}+  Y + 2\bigg) -2 \oz   \bigg( (Y + 1)^{\frac{1+r} 2} - Y^{\frac{1+r} 2}\bigg).
\end{align*}
Observe that all four polynomials are non-zero. Indeed, the term of highest degree in $f_2(Y)$ and $f_3(Y)$ is $2(\ox - \oz) Y^{r+1}$. Moreover, we have $f_4(\bar 0) = 2(\ox - \oz)$. For $f_1$, we see that the term of highest degree is $\ox \frac{(r+1)(r-1)} 4 Y^{r-1}$ if $r-1 > \frac{r+1} 2$, i.e. if $r > 3$. On the other hand, if $r=3$, then $\chara(\FF) = p = 3$ since $r$ is a power of $p$ by definition. In that case we have $f_1(Y) = \ox Y^2$. This confirms that all four polynomials are non-zero.

Since $f(\bar b) = \bar 0$ for all $\ob \in \FF \setminus \{-\ove,\overline{0} \}$ by the above, we deduce that the polynomial
$$P(Y) = f_1(Y) f_2(Y) f_3(Y) f_4(Y) (Y^2 + Y) \in \EE[Y]$$
vanishes on the field $\iota(\FF)$. Since $f_1$ and $f_4$ have degree~$\leq r-1$ while $f_2$ and $f_3$ have degree~$r+1$, we see that $P$ has degree~$\leq 4r+2$. Therefore $|\FF| = q_i \leq 4r+2 < 5r \leq 5 \sqrt{q_i}$. This implies that $q_i = 9$ and $r = 3$, so that $f_1(Y) = \ox Y^2$. For
 $\bar b = \bar e$ both $\bar b$ and $\ove + \bar b$ are squares. We must then have $f_1(\ob) = \bar 0$, which yields $\ox \iota(\ob)^2 = \bar 0$, a contradiction. This proves that $\ox = \bar 0$. 

Assume now that $\oz \neq \bar 0$. We now consider the polynomial $Q(Y) = f_3(Y) f_4(Y)$, so that $Q(\iota(\bar b)) = \bar 0$ for all non-squares $\ob \in \FF \setminus \{-\ove,\overline{0} \}$. Now $Q$ has degree $r+1+ {\frac{r+1}{2}}-1$, hence 
$\frac{3}{2} r +\frac{1}{2} \geq \frac{1}{2} (q_i-1)$. We deduce that
$q_i \leq 3r +2 < 4r \leq 4\sqrt{q_i}$, and as before it follows that $q_i =9$ and $r=3$. In that case, one computes $Q(Y) = - \oz Y^3 (Y-1)^2$. This implies that every non-square in $\FF$ must be zero, which is absurd. Hence $\oz = \bar 0$, and the proof is complete.
\end{proof}

\begin{lemma}\label{faithful action}
Assume that the root group $U_\infty$ is a closed subgroup of $\Aut(T)$. Let $V, W \leq U$ subgroups such that $U_0 =U_1 +V$ and $U_1 = U_2 +W$. Then the centraliser $C_{\HH}(V+W) $ is trivial.
\end{lemma}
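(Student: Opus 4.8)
The plan is to show that any element $h \in C_{\HH}(V+W)$ acts trivially on all of $\partial T$, which forces $h = 1$ since the little projective group acts faithfully (it is $2$-transitive on $\partial T$) and $U_\infty$ is closed. First I would record the local consequences of $h$ centralising $V+W$. Since $U_0 = U_1 + V$ and $h$ centralises $V$ and normalises $U_1$ (as $h \in \HH = G_{\mathfrak{o},\infty}$, which fixes $\Sigma$ pointwise, hence fixes each $x_n$ and normalises each $U_n$), the action of $h$ on $U_0/U_1$ is trivial; but $U_0/U_1$ is the root group of the local Moufang set at $x_0$ (in the coordinates of Lemma~\ref{quadratic}), so $h$ acts trivially on $x_0^\perp$. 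Likewise $h$ centralises $W$ and $U_1 = U_2 + W$, so $h$ acts trivially on $U_1/U_2$ and hence on $x_1^\perp$. Thus $h$ fixes every vertex at distance $\le 1$ from $x_0$ and every vertex at distance $\le 1$ from $x_1$; in particular $h$ fixes $x_{-1}, x_0, x_1, x_2$ and all of their neighbours.

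Next I would propagate this triviality along the whole tree using the Moufang structure and the transitivity of the root groups. The key mechanism: if $h$ fixes a vertex $y$ together with all of $y^\perp$, I want to conclude $h$ fixes $z^\perp$ for every neighbour $z$ of $y$. Fix such a $z$ and let $\e \in \partial T$ be the end with $y \in [z,\e)$; then $h$ fixes $\e$ (it fixes the ray $[y,\e)$... more carefully: $h$ fixes $y$ and the neighbour of $y$ on $[y,\e)$, and by induction the whole ray). Since $U_{\e,z}$ acts transitively on the neighbours of $z$ off $[z,\e)$ and $h$ normalises $U_{\e}$ (being in $\HH$, provided $\e \in \{\infty,\mathfrak{o}\}$; for a general end one conjugates), conjugation by $h$ permutes the vertices of $z^\perp \setminus [z,\e)$ compatibly with the $U_{\e,z}$-action, and since $h$ already fixes the vertex $z' := $ the one on $[z,\e)$ which lies in $y^\perp$, a short argument with the regularity of the $U_{\e,z}$-action pins down that $h$ fixes every vertex of $z^\perp$. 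Actually the cleanest route is: $h \in \HH$ fixes $\Sigma$ pointwise and normalises $U_n$ for all $n$, so it suffices to show $h$ centralises each $U_n$; by Lemma~\ref{structure of the Hua group}(i) and Proposition~\ref{Hua maps II}, once $h$ acts trivially on $U_0/U_1$ and $U_1/U_2$ one gets inductively, via the relations $bh_c = ((b\tau+c)\tau - c\tau)\tau + c$ and the fact that the local Hua action at $x_n$ is determined by that at $x_0, x_1$, that $h$ acts trivially on $U_n/U_{n+1}$ for all $n \in \ZZ$. Since $h$ is elliptic... no: $h$ need not be, but $h \in \overline{\la U_\infty \ra}$-conjugates, so $\bigcap_n U_n = 1$ and $U_\infty$ is closed gives that an element of $U_\infty$ fixing $\Sigma$ and acting trivially on every $U_n/U_{n+1}$ is trivial.

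I would finish as follows: $C_{\HH}(V+W)$ normalises $U_\infty$ and acts on the descending chain $(U_n)_{n\in\ZZ}$ with $\bigcap_n U_n = \{1\}$ and, by closedness of $U_\infty$, $U_\infty = \varprojlim U_\infty/U_n$ as a profinite-by-discrete object; an automorphism of $U_\infty$ fixing $\Sigma$ and inducing the identity on each successive quotient $U_n/U_{n+1}$ is the identity on $U_\infty$. Combined with the fact that $h$ also fixes $x_n$ for all $n$ and hence (having $U_\infty$-image trivial and acting as identity near $\Sigma$) fixes enough of $T$ to force $h$ trivial in $G$, we conclude $C_{\HH}(V+W) = \{1\}$. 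The main obstacle I anticipate is the inductive propagation step — showing that trivial action on $U_0/U_1$ and $U_1/U_2$ forces trivial action on all $U_n/U_{n+1}$ — since this is exactly where one must combine Proposition~\ref{Hua maps II}, the Moufang relations for $h_c$, and the fact that the two local Moufang sets at $x_0$ and $x_1$ generate (under the $\mu$-maps and the translation $\mu_a\mu_b$) all the local data along $\Sigma$; the closedness hypothesis enters precisely to pass from "trivial on every finite quotient" to "trivial".
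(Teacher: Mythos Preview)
Your proposal has a genuine gap at the final step. The claim that ``an automorphism of $U_\infty$ inducing the identity on each successive quotient $U_n/U_{n+1}$ is the identity on $U_\infty$'' is false in general: for instance, on $U=\prod_{k\ge 0}\FF_p$ with $U_n=\prod_{k\ge n}\FF_p$, the lower-triangular automorphism $(a_0,a_1,a_2,\dots)\mapsto(a_0,a_0+a_1,a_1+a_2,\dots)$ acts trivially on every $U_n/U_{n+1}$ without being the identity. So even if you succeeded in propagating ``$h$ acts trivially on $U_0/U_1$ and $U_1/U_2$'' to all layers --- and your sketch does not actually carry this out; the appeal to Proposition~\ref{Hua maps II} and the Hua relations does not by itself move information from $x_0,x_1$ to $x_n$ --- you would still not be done. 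The closedness hypothesis does not rescue this step: it tells you that $C_U(h)$ is closed, not that a layerwise-trivial automorphism is globally trivial.

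The paper's argument bypasses the filtration and works directly with $C_U(h)$. The key input you are missing is that $C_U(h)$ is a \emph{root subgroup} of $U$ (this is \cite[Lemma~6.2.3]{DS1}); in particular $\mu_c$ preserves $C_U(h)$ for every $c\in C_U(h)^\#$. Choosing $a\in V\setminus U_1$ and $b\in W\setminus U_2$, the translation $t=\mu_a\mu_b$ therefore normalises $C_U(h)$, and since $t$ conjugates $U_n$ to $U_{n+2}$, the centraliser $C_U(h)$ contains every finite sum $\sum_n (a_n+b_n)t^n$ with $a_n\in V$, $b_n\in W$, hence is dense in $U$. As $h$ acts continuously on the closed group $U_\infty$, the centraliser is also closed, so $C_U(h)=U$ and $h=1$. (One can equivalently observe $[h,\mu_a]=[h,\mu_b]=1$ directly from Lemma~\ref{mumaps}(ii), since $ah=a$ and $bh=b$; this yields the same density statement and is close to what you were reaching for with ``the translation $\mu_a\mu_b$''.) The essential point is that you must show $C_U(h)$ itself is large, not merely that $h$ is congruent to the identity modulo each $U_{n+1}$.
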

\begin{proof}
Let $h \in C_{\HH}(V+W)$. Then $C_U(h)$ is a root subgroup of $U$, see Lemma 6.2.3 from \cite{DS1}. Let 
$a \in V \setminus U_1$ and $b \in  W \setminus U_2$ and set $t =\mu_a \mu_b$. Then $t$
conjugates $U_n$ to $U_{n+2}$ for all $n \in \ZZ$. Since $a,b \in C_U(h)$ and since
$C_U(h)$ is a root subgroup, it follows that $t$ normalizes $C_U(h)$. Therefore the set
$$\big\{\sum_{i=n}^m a_it^i +b_i t^i \; | \; a_i \in V, b_i \in W; n,m \in \ZZ, n<m\big\}$$ 
is contained in $C_U(h)$. Identifying $U$ with $U_\infty$ via $\alpha$, we may view $U$ as a topological group and infer that $C_U(h)$ is dense in $U$. Since $h$ induces a continuous automorphism on $U$, it follows that $C_U(h)$ is closed, so that $C_U(h) =U$ and $h=1$.
\end{proof}

\begin{prop}\label{prop:FiniteRootSubgroup}
Assume that $T$ is locally finite and that $U$ is abelian of exponent $p$ for some prime $p$. Assume further that the root group $U_\infty$ is closed in $\Aut(T)$. 

For any  finite solvable $p^{\prime}$-subgroup $X \leq \HH$ and  any $i \in \{0,1\}$, there is a finite subgroup $V \leq U_i$ satisfying the following conditions:

\begin{enumerate}
\item $V$ is a root subgroup. 

\item $U_i $ is the direct sum of $V$ and $U_{i+1}$. 

\item The group $\tilde{\mathcal H} = \la \mu_a \mu_b \; | \; a, b \in V^\#\ra$ is finite cyclic of order prime to $p$.

\item $V$ is normalised by $X$ and $\tilde{\mathcal H}$.
\end{enumerate}
\end{prop}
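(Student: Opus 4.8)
The plan is to feed the given data into Proposition~\ref{construction of root groups} and then take a limit. Applying that proposition to the finite solvable $p^{\prime}$-subgroup $X$ and to $i$, we obtain a descending chain $U_i = V_i \geq V_{i+1} \geq \cdots$ of subgroups satisfying its conditions~(i)--(iv) for every $n \geq i$. I would then claim that
\[
V := \bigcap_{n \geq i} V_n
\]
has the four required properties. All the work is in showing that $V$ is finite with $U_i = V \oplus U_{i+1}$; the other three properties are then short deductions from results already at hand.

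The hypothesis that $U_\infty$ is closed enters through a topological remark. Since $T$ is locally finite, the stabiliser of $x_i$ in $\Aut(T)$ is compact, so $U_{\infty, x_i}$ is closed in a compact group, hence compact; identifying $U$ with $U_\infty$ via $\alpha$, this makes $U_i$ a profinite group in which the subgroups $U_n$ with $n > i$ are open of finite index and $\bigcap_n U_n = 1$ (any element of $U_\infty$ fixing the apartment $\Sigma$ pointwise would fix the end $\mathfrak{o} \neq \infty$, contradicting freeness of the action of $U_\infty$ on $\partial T \setminus \{\infty\}$). Using $V \subseteq V_n$ and condition~(ii) of Proposition~\ref{construction of root groups}, I get $V \cap U_{i+1} \subseteq V_n \cap U_{i+1} = U_{n+1}$ for all $n$, hence $V \cap U_{i+1} = 1$. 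For the complementary fact $V + U_{i+1} = U_i$, iterating condition~(i) shows $V_n + U_{i+1} = U_i$ for all $n \geq i$, so each $V_n$ --- which is open in $U_i$, as it contains $U_{n+1}$, hence compact --- maps onto the finite group $U_i/U_{i+1}$ under the quotient map $\pi$. Since the $V_n$ form a decreasing sequence of nonempty compact sets and $\pi$ is continuous, $\pi(V) = \pi\big(\bigcap_n V_n\big) = \bigcap_n \pi(V_n) = U_i/U_{i+1}$, that is, $V + U_{i+1} = U_i$. Thus $U_i = V \oplus U_{i+1}$ (condition~(ii) of the proposition), and in particular $V$ is finite of order $q_i$. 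This compactness step, reconciling the infinitely descending chain with the sought finite group, is the only genuinely delicate point.

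Next I would verify that $V$ is a root subgroup. As $V \subseteq V_i = U_i$ and $V \cap U_{i+1} = 1$, each $e \in V^\#$ lies in $U_i \setminus U_{i+1}$, so $\mu_e$ restricts to a permutation of $U_i \setminus U_{i+1}$ by Lemma~\ref{lem:reflection}. Fix such an $e$, and let $v \in V^\#$. For each $n \geq i$, condition~(iii) of Proposition~\ref{construction of root groups} supplies $e_n \in V_n \setminus U_{n+1}$ with $v\mu_{e_n} \in V_n$, and condition~(iv) says $V_n$ is normalised by $H_n = \la \mu_a\mu_b \mid a,b \in V_n\setminus U_{n+1}\ra$. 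Since $e, e_n, v$ all lie in $V_n \setminus U_{n+1}$, the element $\mu_{e_n}^{-1}\mu_e = \mu_{-e_n}\mu_e$ (Lemma~\ref{mumaps}(i)) belongs to $H_n$, whence $v\mu_e = (v\mu_{e_n})(\mu_{-e_n}\mu_e) \in V_n$. As this holds for all $n$, $v\mu_e \in V$, so $V$ is a root subgroup witnessed by $e$. Furthermore the reduction $V \to U_i/U_{i+1}$ is a group isomorphism intertwining $\mu_e|_{V^\#}$ with the permutation $\ttau$ of Lemma~\ref{local Moufang sets II}, so $\mathbb{M}(V, \mu_e|_{V^\#})$ is isomorphic to the local Moufang set at $x_i$, which by Lemma~\ref{local situation} is $\mathbb{M}(\FF)$ for a finite field $\FF$ of characteristic $p$.

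It remains to deal with the last two conditions. By Lemma~\ref{root subgroup}, applied to the root subgroup $V$ with associated Moufang subset $\mathbb{M}(\FF)$, the group $\tilde{\mathcal H} = \la \mu_a\mu_b \mid a,b \in V^\#\ra$ acts faithfully on $V$ and is cyclic of order $|(\FF^*)^2|$, which divides $|\FF|-1$ and is hence prime to $p$; this is condition~(iii). Finally, each $V_n$ is $X$-invariant by condition~(iv) of Proposition~\ref{construction of root groups}, hence so is $V$, and $V$ is $\tilde{\mathcal H}$-invariant since it is a root subgroup (equivalently, each generator $\mu_a\mu_b$ of $\tilde{\mathcal H}$ normalises every $V_n$). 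This settles condition~(iv) and completes the plan.
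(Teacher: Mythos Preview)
Your proof is correct and follows the same overall strategy as the paper: apply Proposition~\ref{construction of root groups}, set $V=\bigcap_n V_n$, and use compactness of $U_i$ to show $U_i = V \oplus U_{i+1}$. The one place where your argument genuinely diverges is the verification that $V$ is a root subgroup. The paper chooses $e \in V$ as an accumulation point of the sequence $(e_n)$ and appeals to the continuity of $a \mapsto \mu_a$ (Proposition~3.8 of \cite{CDM}) to conclude that $\mu_e$ preserves $V$. You instead fix an \emph{arbitrary} $e \in V^\#$ and observe that for each $n$ the element $\mu_{-e_n}\mu_e$ lies in $H_n$ (since $e, e_n \in V_n \setminus U_{n+1}$), so $v\mu_e = (v\mu_{e_n})(\mu_{-e_n}\mu_e) \in V_n$ by the $H_n$-invariance of $V_n$. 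This is a cleaner, purely algebraic argument: it avoids the external continuity result, shows that \emph{every} $e \in V^\#$ witnesses the root-subgroup property (not just a carefully chosen one), and makes the role of condition~(iv) of Proposition~\ref{construction of root groups} more transparent. The remaining steps---identifying the Moufang subset with the local Moufang set at $x_i$ and invoking Lemma~\ref{root subgroup} for (iii), and deducing (iv) from the $X$- and $H_n$-invariance of each $V_n$---are handled identically in both proofs.
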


\begin{proof}
Let $U_i=V_0 \geq V_1 \geq V_2 \geq \ldots$ be a descending chain afforded by applying  Proposition~\ref{construction of root groups}. Remark that $U_n$ is an open subgroup of $U$ for all $n$. Since $V_n \geq U_{n+1}$ for all $n$, we infer that $V_n$ is also open, hence closed, for all $n$. 

Set $V =\bigcap_{n \geq 0} V_n$. 
%and $Y_0 =\langle \mu_a \mu_b \; | \; a,b \in V \setminus U_{i+1}\ra$. 
%Since $U_i = V_n + U_n$
For every $a \in U_i \setminus U_{i+1}$ and   $n \geq 0$, we have 
$U_{i+1} a \cap V_n \ne \varnothing$. By compactness, it follows that $V \cap U_{i+1}a = \bigcap_{n \in \NN} (V_n \cap U_{i+1} a)$ is non-empty. Therefore $U_i = V + U_{i+1}$. Moreover, we have   
$V_n \cap U_{i+1} =U_{n+1}$ for all $n$, so that $V \cap U_{i+1} = \bigcap_{n \in \NN} U_n =0$. Thus 
$V$  is a complement for $U_{i+1}$ in $U_i$, thereby proving (ii). Moreover, since we have  $\tilde{\mathcal H} \leq N_{\HH}(V_n)$ 
for all $n \geq 0$, we see that $\tilde{\mathcal H} \leq N_{\HH}(V)$. Since $X$ normalises $V_n$ for all $n$, we also obtain (iv). 

Let $e_n \in V_n$ be the element afforded by Proposition~\ref{construction of root groups}(iii); let $e \in V$ be any accumulation point of the sequence $(e_n)$.  Then the element $\tau = \mu_e$ is an accumulation point of the sequence $\mu_{e_n}$ by \cite[Proposition~3.8]{CDM}. The fact that $a \tau \in V$ for all $a \in V^\#$ now follows from Proposition~\ref{construction of root groups}(iii), in view of the definition of $V$.

Remark that the Moufang subset $(V, \mu_a)$ with $a \in V^\#$ is isomorphic to the local Moufang set at $x_i$. It is thus isomorphic to $\mathbb  M(\FF_{q_i})$ by Lemma~\ref{local situation}, we deduce from  Lemma~\ref{root subgroup} that $\tilde{\mathcal H}$ is cyclic of order $q_i-1$ if $p=2$ (resp.   
$\frac{q_i-1}{2}$ if $p$ is odd).
\end{proof}

\subsection{Construction of a twin tree lattice}

RGD-systems and their relations with twin tree lattices have been reviewed in Section~\ref{sec:RGD}. The main step in our proof of Theorem~\ref{thm:main} is given by the following.

\begin{theorem}\label{RGD}
Let $T$ be a locally finite tree and $p$ be a prime. Let  $(U_{\mathfrak{e}} \; | \; {\mathfrak{e}} \in \partial T)$ be a collection of closed, abelian subgroups of exponent $p$  in $\Aut(T)$ such that $(\partial T, (U_{\mathfrak{e}} \; | \; {\mathfrak{e}} \in \partial T))$ is a Moufang set. Let $G = \overline{\la U_{\mathfrak{e}} \; | \; {\mathfrak{e}} \in \partial T\ra}$ be the closure of its little projective group. 

Then, for any pair of distinct ends $+{\mathfrak{e}}, -{\mathfrak{e}} \in \partial T$,  there is a dense subgroup $\Gamma \leq  G$, a finite subgroup $X$ of the Hua group, and a collection of finite root subgroups $\{U_n^{\epsilon} \leq U_{\epsilon {\mathfrak{e}}}\}_{n \in \ZZ, \epsilon \in\{+,-\}}$ such that $(\Gamma, (U_n^{\epsilon})_{n \in \ZZ, \epsilon \in\{+,-\} },X)$ is an RGD-system.

\end{theorem}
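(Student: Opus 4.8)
We take the fixed apartment $\Sigma=(x_n)_{n\in\ZZ}$ to have ends $\infty:=+\e$ and $\mathfrak o:=-\e$. The plan is to produce finite root subgroups on the $\infty$-side from Proposition~\ref{prop:FiniteRootSubgroup}, spread them along $\Sigma$ by a translation lying in $\HH$, reflect them onto the $\mathfrak o$-side, and then verify the axioms. First apply Proposition~\ref{prop:FiniteRootSubgroup} with $i=0$, $X=1$ to get a finite root subgroup $U_0^+\le U_0$ with $U_0=U_0^+\oplus U_1$ and $\tilde{\mathcal H}_0:=\la\mu_a\mu_b\mid a,b\in(U_0^+)^\#\ra$ finite cyclic of order prime to $p$; then apply it with $i=1$, $X=\tilde{\mathcal H}_0$ to get a finite root subgroup $U_1^+\le U_1$ with $U_1=U_1^+\oplus U_2$, normalised by $\tilde{\mathcal H}_0$, and $\tilde{\mathcal H}_1:=\la\mu_a\mu_b\mid a,b\in(U_1^+)^\#\ra$ again finite cyclic of $p'$-order. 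Choose $e_0\in(U_0^+)^\#$, $e_1\in(U_1^+)^\#$ and set $t:=\mu_{e_0}\mu_{e_1}$; by Lemma~\ref{lem:reflection} this is hyperbolic, lies in $\HH$, and acts on $\Sigma$ as the length-$2$ translation $x_m\mapsto x_{m+2}$. For $k\in\ZZ$ put $U_{2k}^+:=t^{-k}U_0^+t^k$, $U_{2k+1}^+:=t^{-k}U_1^+t^k$, $e_{2k}:=t^{-k}e_0t^k$, $e_{2k+1}:=t^{-k}e_1t^k$, so each $U_n^+$ is a finite root subgroup of $U_\infty$ with $U_n^+\le U_n$ and $U_n=U_n^+\oplus U_{n+1}$. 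Writing $\tau_n:=\mu_{e_n}$ (which swaps $\infty$ and $\mathfrak o$ and reflects $\Sigma$ through $x_n$), set $U_n^-:=\tau_n^{-1}U_n^+\tau_n\le U_{\mathfrak o}$. Finally put $X:=\la\tilde{\mathcal H}_0,\tilde{\mathcal H}_1\ra$ and $\Gamma:=\la X\cup\bigcup_{n,\epsilon}U_n^\epsilon\ra\le G$; since $\tilde{\mathcal H}_0\le\HH_0$ and $\tilde{\mathcal H}_1\le\HH_1$ normalise each other by Lemma~\ref{normalization}, $X$ is a finite solvable $p'$-subgroup of the Hua group $\HH$.

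Axioms (RGD4) and (RGD0) are immediate. Axiom (RGD1) is vacuous: for fixed $\epsilon$ the $U_n^\epsilon$ all lie in the abelian group $U_{\epsilon\e}$, so the relevant commutators vanish. For (RGD3), $U_n^+\cap\la U_m^-\mid m\ra\le U_\infty\cap U_{\mathfrak o}=1$, since a non-trivial element of $U_\infty$ cannot fix $\mathfrak o\in\partial T\setminus\{\infty\}$; the reversed statement is symmetric. That $X$ normalises every $U_n^\epsilon$ uses that $\tilde{\mathcal H}_0$ normalises $U_0^+$ and $U_1^+$ (the first because it consists of $\mu$-maps stabilising the root subgroup $U_0^+$, the second by Proposition~\ref{prop:FiniteRootSubgroup}(iv)), that $\tilde{\mathcal H}_1$ normalises $U_1^+$, that by Lemma~\ref{mumaps}(ii) conjugation by $t^k\in\HH$ carries $\tilde{\mathcal H}_j$ onto $\la\mu_a\mu_b\mid a,b\in(U_{2k+j}^+)^\#\ra$, and that each such group is the Hua group of the rank-one group $\la U_n^+,U_n^-\ra$; combining this with Lemma~\ref{normalization} shows $X=\la\tilde{\mathcal H}_n\mid n\in\ZZ\ra$, which normalises each $U_n^\pm$ and is stable under conjugation by $t$.

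The substance of the proof is (RGD2). Fix $\epsilon=+$, $n\in\ZZ$ and $a\in(U_n^+)^\#$, and take $\mu_a\in G$ to be the Hua map of the ambient Moufang set $\mathbb M(U,\tau)$. By Lemma~\ref{lem:reflection} it interchanges $\infty$ and $\mathfrak o$ and acts on $\Sigma$ as the reflection through $x_n$, so it conjugates the filtration of $U_\infty$ at level $m$ onto that of $U_{\mathfrak o}$ at level $2n-m$ (and conversely) for every $m$; thus each $(U_m^\delta)^{\mu_a}$ is a root subgroup of $U_{-\epsilon\e}$ of the correct level $2n-m$. To see it equals $U_{2n-m}^{-\delta}$, reduce by Lemma~\ref{mumaps}(ii), using $t^k$ for a suitable $k$, to the cases $n,m\in\{0,1\}$; there $U_j^+,U_j^-$ form a pair of opposite root subgroups whose Moufang subset is $\mathbb M(\FF_{q_j})$ by Lemmas~\ref{local situation} and~\ref{root subgroup}, so $L_j:=\la U_j^+,U_j^-\ra\cong\SL_2(\FF_{q_j})$ or $\PSL_2(\FF_{q_j})$ and, inside $L_j$, the Hua map of $\mathbb M(U,\tau)$ is that of $\mathbb M(\FF_{q_j})$; combining this local statement with the $\Sigma$-reflection and the compatible choices built into the $U_n^\pm$ yields $(U_m^\delta)^{\mu_a}=U_{2n-m}^{-\delta}$ in general, and in particular $\mu_a\in U_n^-aU_n^-$. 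For uniqueness, if $\mu,\mu'\in U_n^-aU_n^-$ both satisfy these relations then $g:=\mu^{-1}\mu'$ lies in the finite group $L_n$, fixes both $\infty$ and $\mathfrak o$, and — being a composite of two maps each effecting the level-and-sign reflection about $n$ — normalises every $U_m^\pm$; the latter forces $g$ to fix $\Sigma$ pointwise, and a short calculation in $L_n\cong\SL_2(\FF_{q_n})$ or $\PSL_2(\FF_{q_n})$ together with Lemma~\ref{faithful action} (applied with $V=U_0^+$, $W=U_1^+$) gives $g=1$, so $\mu=\mu'$.

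Finally $\Gamma$ is dense in $G$: iterating $U_n=U_n^+\oplus U_{n+1}$ gives $U_0=U_0^+\oplus\dots\oplus U_{m-1}^+\oplus U_m$ for every $m$, and since the $U_m$ ($m\ge0$) form a descending chain of open subgroups of $U_\infty$ with trivial intersection, $\la U_n^+\mid n\ge0\ra$ is dense in $U_0$; using also $U_\infty=\bigcup_{m\ge0}U_{-m}$ gives $\overline{\la U_n^+\mid n\in\ZZ\ra}=U_\infty$, and symmetrically $\overline{\la U_n^-\mid n\ra}=U_{\mathfrak o}$. As $\la U_\infty,U_{\mathfrak o}\ra$ is the little projective group $G^\dagger$ of the ambient Moufang set and $G=\overline{G^\dagger}$, we get $\overline\Gamma\supseteq\overline{\la U_\infty,U_{\mathfrak o}\ra}=G$, hence $\overline\Gamma=G$ since $\Gamma\le G$ is contained in the closed group $G$. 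The main obstacle is the existence part of (RGD2): transferring the good behaviour of the ambient Hua map $\mu_a$ from the large root groups $U_\infty,U_{\mathfrak o}$ down to the finite root subgroups $U_m^\pm$, where both the identification of $\la U_n^+,U_n^-\ra$ with a finite $\SL_2$ or $\PSL_2$ and the equivariance (Lemma~\ref{mumaps}(ii)) under the translation $t$ are essential; the uniqueness clause is then a clean use of Lemma~\ref{faithful action}, and (RGD0), (RGD1), (RGD3), (RGD4) together with the normalisation by $X$ are essentially formal.
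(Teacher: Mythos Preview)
Your overall architecture is the same as the paper's, but you skip a step that is essential, and the gap shows up precisely where you try to verify that $X$ normalises all the $U_n^\epsilon$ and in your treatment of (RGD2).

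After applying Proposition~\ref{prop:FiniteRootSubgroup} twice, you have $\tilde{\mathcal H}_0$ normalising both $U_0^+$ and $U_1^+$, and $\tilde{\mathcal H}_1$ normalising $U_1^+$. But nothing in your construction forces $\tilde{\mathcal H}_1$ to normalise $U_0^+$. Lemma~\ref{normalization} does not help: it says the \emph{big} groups $\HH_m=\langle \mu_a\mu_b\mid a,b\in U_m\setminus U_{m+1}\rangle$ normalise one another, not that your small subgroups $\tilde{\mathcal H}_m\leq\HH_m$ do. Concretely, for $c\in(U_1^+)^\#$ you write $\mu_c=\mu_{e_1}\cdot(\mu_{e_1}^{-1}\mu_c)$ with $\mu_{e_1}^{-1}\mu_c\in\tilde{\mathcal H}_1$; to get $(U_0^+)^{\mu_c}=U_2^-$ you need $\tilde{\mathcal H}_1$ to normalise $U_0^+$, which you have not established. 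The same problem obstructs your claim that $t=\mu_{e_0}\mu_{e_1}$ normalises $X$, and hence that $X=\langle\tilde{\mathcal H}_n\mid n\in\ZZ\rangle$.

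The paper closes this gap by an \emph{iteration}. Having built $V^0$ and $W^0$ and the finite $p'$-group $X_0=\tilde{\mathcal H}_0\tilde{\mathcal H}_1$, one goes back and applies Proposition~\ref{prop:FiniteRootSubgroup} again with $X=X_0$ to get a new $X_0$-invariant complement $V^1$ for $U_1$ in $U_0$, then a new $W^1$, and so on, producing an ascending chain $X_0\leq X_1\leq\cdots$ of finite solvable $p'$-subgroups of $\HH$. Lemma~\ref{faithful action} forces this chain to stabilise, yielding a single finite $p'$-group $X$ that simultaneously normalises a complement $V$ of $U_1$ in $U_0$ and a complement $W$ of $U_2$ in $U_1$, with $\{\mu_a\mu_b\mid a,b\in V^\#\}\cup\{\mu_a\mu_b\mid a,b\in W^\#\}\subseteq X$. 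The key consequence is that for $a\in V^\#\cup W^\#$ and $x\in X$ one has $x^{\mu_a}=x\,\mu_{ax}\mu_a\in X$ (since $ax\in V^\#\cup W^\#$), so each $\mu_a$ normalises $X$; this is exactly what makes (RGD2) go through cleanly and guarantees that $X$ normalises every $U_n^\epsilon$. Your argument needs this iteration; once it is in place, the remaining verifications (including density) are as you describe.
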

\begin{proof}
Let $V^0 \leq U_0$ be the root subgroup afforded by  applying  Proposition~\ref{prop:FiniteRootSubgroup} with $i=0$ and $X=1$. Set $Y_0 =   \la \mu_a \mu_b \; | \; a, b \in V^\#\ra$. Then $Y_0$ is a finite solvable $p'$-subgroup of $\HH$. We then  apply Proposition~\ref{prop:FiniteRootSubgroup}   with $X=Y_0$ and $i=1$. We obtain a $Y_0$-invariant root subgroup $W^0$ of $U_1$ which is a complement for $U_2$ in $U_1$. Set 
$Z_0 = \langle \mu_a \mu_b \; | \; a,b \in (W^0)^\#\ra$. Since $Y_0$ normalizes $W^0$, it also normalizes 
$Z_0$. Since $U_1/U_2$ and $W^0$ are isomorphic $Y_0$-modules and since $\HH/C_{\HH}(U_1/U_2)$ is a $p^{\prime}$-group, we see that $X_0 =Y_0 Z_0$ 
is a finite solvable $p^{\prime}$-group. 

We now apply Proposition~\ref{prop:FiniteRootSubgroup} once more, with $X=X_0$ and $i=1$. This yields  a $X_0$-invariant root subgroup $V^{1}$ which is a complement for $U_1$ in  $U_0$. Set $Y_1=\la \mu_a \mu_b \; | \; a,b \in (V^{1})^{\#}\ra$. As above we conclude that 
$Y_1$ is normalized by $X_0$. Set $X_1 =Y_1 X_0$. We repeat this process and get an 
ascending series of finite solvable $p^{\prime}$-groups $X_0 \leq X_1 \leq X_2 \leq X_3 \leq \ldots$ such that $X_i$ 
normalizes a complement $V^i$ of $U_1$ in $U_0$ and a complement $W^i$ of $U_2$ in $U_1$, such that $V^i$ and $W^i$ are both root subgroups. Since 
$X_i$ acts faithfully on $V^i +W^i$ by Lemma~\ref{faithful action}, this chain must stabilize and the process terminates. We therefore obtain  a finite $p^{\prime}$-group $X$ which normalizes a complement $V$ for $U_1$ in $U_0$ and a complement 
$W$ for $U_2$ in $U_1$. Moreover  $V$ and $W$ are root subgroups,  and the sets 
$\{\mu_a \mu_b \; | \;  a,b \in V^{*}\}$ and $\{\mu_a \mu_b \; | \; a,b \in W^{*}\}$ are contained in $X$. Notice that 
if $a \in V^{*} \cup W^{*}$ and $x \in X$, then $x^{\mu_a} = \mu_a x \mu_a = x \mu_{ax} \mu_a
\in X$.

Now we fix $a \in V^{*}$, $b\in W^{*}$ and set $s_0 =\mu_a$, $s_1=\mu_b$ and $t =s_0 s_1$. We define 
$U_0^+ = \alpha(V), U_1^+ =\alpha(W), U_0^- =(U_0^+)^{s_0}, U_{-1}^-=(U_1^+)^{s_1}, U_{2n+i}^+ = (U_i^+)^{t^n}$ 
and $U_{2n+i}^- = (U_i^-)^{t^n}$ for $n \in \ZZ$ and $i \in \{0,1\}$. Note that 
$t$ normalizes $X$ and thus $X$ normalizes $U_n^{\epsilon}$ for $n \in \ZZ$ and 
$\epsilon \in \{ +,- \}$.
Let $\Gamma$ be the subgroup of $G$ generated by $X$ and $ U_n^{\epsilon}$ for all $n \in \ZZ$ and $\epsilon \in\{+,-\}$. We proceed to verify that   $(\Gamma, (U_n^{\epsilon})_{n \in \ZZ, \epsilon \in\{+,-\} },X)$ is 
an RGD-system by checking the axioms successively. 

\medskip \noindent (RGD0) 
$V$ and $W$ are non-trivial and so is $U_n^{\epsilon}$ for all $n \in \ZZ$ and  $\epsilon \in\{+,-\}$.

\medskip \noindent (RGD1) 
Since $U$ is abelian, $[U_n^{\epsilon}, U_m^{\epsilon}] =1$ for $n,m \in \ZZ$ and 
$\epsilon \in\{+,-\}$. Thus (RGD1) follows.

\medskip \noindent (RGD2) 
If $c\in U_0^+$, then $s_0 \mu_c \in X$, so $\mu_c$ induces a fundamental 
reflection of the root system. The same holds for $U_1^-$, so (RGD2) follows.

\medskip \noindent (RGD3)  
 Let $U_+ = \langle U_n^+ \; | \; n \in \ZZ \rangle$. 
Since $U \cap U^{s_i} =1$ for $i=0,1$, we have $U_n^{-} \cap U_+ \leq 
U_n^- \cap U =1$ for all $n \in \ZZ$, so (RGD3) holds.

\medskip \noindent (RGD4) 
By definition the group $\Gamma$ is generated by $X$ and 
$(U_n^{\epsilon})_{n\in\ZZ, \epsilon \in\{+,-\} }$.

\medskip 
This confirms that $(\Gamma, (U_n^{\epsilon})_{n \in \ZZ, \epsilon \in\{+,-\} },X)$ is an RGD-system.
\end{proof}

\subsection{Endgame}

We now assemble all the results collected thus far to complete the proof of our main theorem. 

\begin{proof}[Proof of Theorem~\ref{thm:main}]
Let $\infty, \mathfrak{o}$ be two different elements of $\partial T$.
By \cite[Proposition~7.2.2]{DS1}, the root group $U_\infty$ is either torsion-free and uniquely divisible, or  of exponent $p$ for some prime $p$. In the former case, the desired conclusion follows   from \cite[Theorem~B]{CDM}. We assume henceforth that $U_\infty$ is of exponent $p$. 

By Theorem~\ref{RGD} there are subgroups
$U_n^{+} \leq U_{\infty}$ and $U_n^{-} \leq U_{\mathfrak o}$, a dense subgroup $\Gamma \leq G = \overline{\la U_{\infty} \cup U_{\mathfrak o} \ra}$, and a subgroup 
$H \leq \mathfrak{H}$ such that $(\Gamma, (U_n^{\epsilon})_{n \in \ZZ, \epsilon \in \{+,-\}}, H)$ is a 
RGD-system. Since the root groups are abelian and in view of Lemma~\ref{local situation}, we see that the corresponding Moufang twin tree is a commutative
$\SL_2$-twin tree. The desired conclusion now follows from Theorem~\ref{ClassificationTheorem} and Remarks~\ref{rem:Classif}.
\end{proof}

\section{Applications}

\subsection{Boundary-transitive tree automorphism groups}

\begin{proof}[Proof of Corollary~\ref{cor:con}]
Suppose that (i) holds.
Let $\infty \in \partial T$ be the repelling fixed point of $h$. 
By Corollary~2.6 from \cite{CDM}, the group $\con(h)$ is closed, so by Theorem~C of loc.~cit. $T$ is boundary Moufang with
the conjugates of $\con(h)$ as root groups. Let $G^{\dagger}$ be the normal subgroup of $G$ generated 
by the conjugates of $\con(h)$. 
By Theorem~\ref{thm:main} there is a non-Archimedean local field
$k$, a simply connected, $k$-simple algebraic group $\mathbf{G}$ defined over $k$ and an 
epimorphism $\varphi: \mathbf{G}(k) \to G^{\dagger}$ with $ker(\varphi) = Z(\mathbf{G}(k))$. Moreover  $G^\dagger$ is closed in $\Aut(T)$. 
By Proposition~3.5 from \cite{CDM}, it follows that   $G^{\dagger} = G^{(\infty)}$. This proves that  (ii) holds. 

The other direction follows from the  well-known properties of rank-one simple algebraic groups over local fields. 
\end{proof}

\begin{proof}[Proof of Corollary~\ref{cor:unimod}]
By Theorem~8.1 from \cite{CCMT}, the group $G$ is either an almost connected rank one simple Lie group, or $G$ has a continuous proper, faithful action by automorphisms on a thick locally finite tree $T$, which is doubly transitive on $\partial T$. In the former case, the desired conclusion holds since all simple Lie groups are algebraic. In the latter case, one observes that the given element $h$ must act as a hyperbolic automorphism of $T$, since otherwise its contraction group $\con(h)$ would be trivial, forcing $G$ to be compact, hence trivial. Therefore, the desired conclusion follows from Corollary~\ref{cor:con}.
\end{proof}

\subsection{Sharply $3$-transitive groups on compact sets}

\begin{proof}[Proof of Corollary~\ref{cor:sharply3transitive}]
Assume that $G$ is a $\sigma$-compact group acting 
sharply $3$-transitively and continuously on a compact set $\Omega$. By Theorem B of \cite{CarDre}, the group $G$ is a rank one simple Lie group, in which case the conclusion follows from \cite{Tits3trans} (and the condition (i) is then automatically satisfied), or there is 
a locally finite tree $T$ on which $G$ acts properly, vertex-transitively and continuously by 
automorphism and a $G$-equivariant homeomorphism $f: \Omega\to \partial T$. We stick henceforth to the latter case. 

Assume that (i) holds. Let $\omega \in \Omega$ and $U_\omega \leq G_\omega$ be a normal subgroup acting regularly on $\Omega \setminus \{\omega\}$. It follows that any two non-trivial elements of $U_\omega$ are conjugate in $G_\omega$. Given an involution $\tau \in G_\omega$ and any element $u \in U_\omega$, we see that the commutator $[\tau, u]$ is a product of two involutions which belongs to $U_\omega$. Therefore every element of $U_\omega$ is a product of two involutions. This implies that the the group $U_\omega$ must be abelian (see \cite[Theorem~3.7]{Kerby} for more details). Therefore, so is its closure $\overline{U_\omega}$. Since any transitive action of an abelian group is regular, it follows that $\overline{U_\omega}$ also acts regularly on $\Omega \setminus \{\omega\}$. This implies that $U_\omega = \overline{U_\omega}$. In other words $U_\omega$ is a closed subgroup of $G$. 

The hypotheses of Theorem~\ref{thm:main} are thus satisfied, and we infer that  the little projective group $G^{\dagger}=\la U_{\omega} \; | \; \omega \in \Omega \ra$ is of the form $\mathbf G(k)/Z$, where $\mathbf G$ is a rank one algebraic group with 
abelian root groups over a local field $k$, and  that $T$ is isomorphic to the Bruhat--Tits tree of $\mathbf G(k)$. As recalled in the introduction, we must have either $\mathbf G(k) \cong \SL_2(D)$, where $D$ is a finite-dimensional central division algebra over $k$, or   $\mathbf G(k) \cong \SU_2(D, h)$, where $D$ is the quaternion central division algebra over $k$, and $h$ is an antihermitian sesquilinear form of   Witt index~$1$ relative to an involution $\sigma$ of the first kind such that the space of symmetric elements $D^\sigma$ has dimension~$3$. Direct computations show that the only group $\mathbf G(k)$ on that list satisfying the condition that the pointwise stabiliser of any triple of distinct points of $\partial T$  acts trivially, is  the group $\mathbf G(k) \cong \SL_2(k)$. This implies that  $G$   must 
be isomorphic to a subgroup of $\Aut(\PSL_2(k)) = \PGaL_2(k)$, thereby proving (ii). 

That (ii) implies (i) is again clear, the regular normal subgroups being afforded by the maximal unipotent subgroups of $\PSL_2(k)$.
\end{proof}

We note here that $\PGaL_2(k)$ may contain other sharply $3$-transitive subgroups than $\PGL_2(k)$. For example, 
if $\sigma$ is an involutory Galois automorphism of $k$ and $\varphi: k^* \to \la \sigma \ra$ is a homomorphism with
$\varphi(a^{\sigma}) =\varphi(a)$ for all $a \in k^*$, then the group 
$$G =\big\{\varphi(\det g) g \; | \; g \in GL_2(k)\big\}/
Z(GL_2(k))$$ 
acts sharply $3$-transitive on $\mathbb{P}^1(k)$.

\medskip
In general, one can define the \textbf{permutational characteristic} of a sharply $2$-transitive group $G$ on a set $\Omega$ as follows (see also \cite[\S 3]{Kerby}). Let $J$ be the collection of all involutions in $G$, which is clearly a non-empty single conjugacy class in $G$. If the elements of $J$ have no fixed point in $\Omega$, then one sets $\chara G  =2$. If that is not the case, then one  proves that the set $J^{2*}$ consisting of all elements of $G$ that are the product of two distinct involutions, forms a conjugacy class $G$, the order of whose elements is  either infinite, or an odd prime $p$. One sets $\chara G= 0$ or $\chara G = p$ accordingly.  If $G$ is sharply $3$-transitive on   $\Omega$, then  the stabiliser $G_\omega$ of a point $\omega \in \Omega$ is sharply $2$-transitive on $\Omega \setminus \{\omega\}$, and  the assignment $\chara G  =\chara G_{\omega}$ defines the permutational characteristic of $G$.

The results in \cite[\S13]{Kerby} show that if $G$ is sharply $3$-transitive of characteristic $p= 3$ or $
p \equiv 1 \mod 3$, then the point stabiliser $G_\omega$ contains a normal subgroup $U_\omega$ acting regularly on $\Omega \setminus \{\omega\}$. Therefore Corollary~\ref{cor:sharply3transitive} yields the following.

\begin{coro}\label{cor:1modulo3} 
Let $G$ be a $\sigma$-compact locally compact group and $\Omega$ be a compact $G$-space, such that the $G$-action on $\Omega$ is sharply $3$-transitive. If $\chara G = 3$ or $ \chara G  \equiv 1 \mod  3$, then there is a non-Archimedean local field $k$ and 
a group $H$ with $\PSL_2(k) \leq H \leq \PGaL_2(k)$ such that $(G,\Omega)$ and $(H,\mathbb{P}^1(k))$ are isomorphic 
permutation groups.   
\end{coro}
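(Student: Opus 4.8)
The plan is to derive this corollary directly from Corollary~\ref{cor:sharply3transitive}, the only extra work being the exclusion of the Archimedean case. First I would record that, by the results of \cite[\S13]{Kerby} recalled just above the statement, the hypothesis $\chara G = 3$ or $\chara G \equiv 1 \bmod 3$ forces the point stabiliser $G_\omega$ to contain a normal subgroup $U_\omega$ acting regularly on $\Omega \setminus \{\omega\}$; that is, condition~(i) of Corollary~\ref{cor:sharply3transitive} holds. Invoking that corollary, I obtain a (possibly Archimedean) local field $k$ and a group $H$ with $\PSL_2(k) \leq H \leq \PGaL_2(k)$ together with an isomorphism of permutation groups $(G, \Omega) \cong (H, \mathbb{P}^1(k))$.

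It then remains to show that $k$ is non-Archimedean. The key point is that the permutational characteristic is a purely algebraic invariant of a sharply $3$-transitive permutation group: an isomorphism of permutation groups restricts to an isomorphism of point stabilisers carrying involutions to involutions and products of two distinct involutions to products of two distinct involutions, hence preserves the conjugacy class $J^{2*}$ and the order of its elements. Thus $\chara G = \chara H$. Now suppose $k$ were Archimedean. Then $G$ is a rank one simple Lie group and, by \cite{Tits3trans}, we may take $H = \PGL_2(\RR)$ or $\PGL_2(\CC)$ acting on $\mathbb{P}^1(\RR)$ or $\mathbb{P}^1(\CC)$; in either case the stabiliser of a point is (conjugate to) the affine group of the line, in which the product of the two reflections $z \mapsto -z + c_1$ and $z \mapsto -z + c_2$ is the nontrivial translation $z \mapsto z + (c_1 - c_2)$. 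Since these products have infinite order and form a single conjugacy class, $\chara H = 0$, contradicting $\chara G \in \{3\} \cup \{p : p \equiv 1 \bmod 3\}$. Hence $k$ is non-Archimedean, which is exactly the assertion of the corollary.

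The argument is essentially bookkeeping, and I do not expect a serious obstacle. The only points demanding (routine) care are that the permutational characteristic is preserved under the permutation-group isomorphism supplied by Corollary~\ref{cor:sharply3transitive} --- which uses nothing about the topology of $G$, so there is no mismatch between the topological input and the purely algebraic input from \cite{Kerby} --- and the computation $\chara \PGL_2(\RR) = \chara \PGL_2(\CC) = 0$, so that the Archimedean alternative of Corollary~\ref{cor:sharply3transitive} is incompatible with the hypothesis on $\chara G$.
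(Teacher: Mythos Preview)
Your proposal is correct and follows the same approach as the paper, which simply invokes Kerby's result to verify condition~(i) of Corollary~\ref{cor:sharply3transitive} and deduces the conclusion directly. You are in fact more careful than the paper: the statement of Corollary~\ref{cor:1modulo3} asserts that $k$ is non-Archimedean while Corollary~\ref{cor:sharply3transitive} allows $k$ to be Archimedean, and you explicitly rule out $k \in \{\RR, \CC\}$ via the characteristic computation $\chara \PGL_2(\RR) = \chara \PGL_2(\CC) = 0$, a point the paper leaves implicit.
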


\begin{bibdiv}
\begin{biblist}
\bib{AB}{book}{
author = {Abramenko, Peter},
author={Brown, Ken},
title={Buildings. Theory and applications},
note={Graduate texts in mathematics 248},
publisher={Springer, Berlin},
year={2008}}

\bib{BGM}{article}{
   author={Baumeister, Barbara},
   author={Gr{\"u}ninger, Matthias},
   author={M{\"u}hlherr, Bernhard},
   title={Moufang twin trees},
   note={Preprint},
   year={2014},
   }
\bib{BurgerMozes}{article}{
   author={Burger, Marc},
   author={Mozes, Shahar},
   title={Groups acting on trees: from local to global structure},
   journal={Inst. Hautes \'Etudes Sci. Publ. Math.},
   number={92},
   date={2000},
   }
   \bib{CCMT}{unpublished}{
   author={Caprace, Pierre-Emmanuel},
   author={Cornulier, Yves},
   author={Monod, Nicolas},
   author={Tessera, Romain},
   title={Amenable hyperbolic groups},
   note={J. Eur. Math. Soc. (to appear), arXiv:1202.3585},
   }
\bib{CDM}{article}{
   author={Caprace, Pierre-Emmanuel},
   author={De Medts, Tom},
   title={Trees, contraction groups, and Moufang sets},
   journal={Duke Math. J.},
   volume={162},
   date={2013},
   number={13},
   pages={2413--2449},
}
\bib{CarDre}{unpublished}{
   author={Carette, Mathieu},
      author={Dreesen, Dennis},
   title={Locally compact convergence groups and $n$-transitive actions},
   note={Preprint (arXiv:1212.4021)},
   year={2012},
   }
\bib{DS1}{article}{
   author={De Medts, Tom},
   author={Segev, Yoav},
   title={A course on Moufang sets},
   journal={Innov. Incidence Geom.},
   volume={9},
   date={2009},
   pages={79--122},
}
\bib{DS2}{article}{
   author={De Medts, Tom},
   author={Segev, Yoav},
   title={Finite special Moufang sets of even characteristic},
   journal={Commun. Contemp. Math.},
   volume={10},
   date={2008},
   number={3},
   pages={449--454},
}
\bib{DS3}{article}{
   author={De Medts, Tom},
   author={Segev, Yoav},
   title={Identities in Moufang sets},
   journal={Trans. Amer. Math. Soc.},
   volume={360},
   date={2008},
   number={11},
   pages={5831--5852},
}
\bib{DST}{article}{
   author={De Medts, Tom},
   author={Segev, Yoav},
   author={Tent, Katrin},
   title={Special Moufang sets, their root groups and their $\mu$-maps},
   journal={Proc. Lond. Math. Soc. (3)},
   volume={96},
   date={2008},
   number={3},
   pages={767--791},
}
\bib{DW}{article}{
   author={De Medts, Tom},
   author={Weiss, Richard M.},
   title={Moufang sets and Jordan division algebras},
   journal={Math. Ann.},
   volume={335},
   date={2006},
   number={2},
   pages={415--433},
 }
\bib{G}{unpublished}{
   author={Gr{\"u}ninger, Matthias},
   title={Multiplicative quadratic maps},
   note={Preprint (arXiv:1401.7265)},
   year={2014},}
\bib{GrMoufangTwinTrees}{unpublished}{
   author={Gr{\"u}ninger, Matthias},
   title={Moufang twin trees whose set of ends is a Moufang set},
   note={Preprint},
   year={2014},
   }
\bib{Kerby}{book}{
   author={Kerby, William},
   title={On infinite sharply multiply transitive groups},
   note={Hamburger Mathematische Einzelschriften, Neue Folge, Heft 6},
   publisher={Vandenhoeck\thinspace \&\thinspace Ruprecht, G\"ottingen},
   date={1974},
}
\bib{McZ}{article}{
author ={McCrimmon, Kevin},
author={Zel'manov, Ephim},
title={The structure of strongly prime quadratic Jordan algebras},
journal={Adv. Math.},
volume={69},
number={2},
date={1988},
pages={133--222}
}
\bib{RT94}{article}{
author={Ronan, Mark},
author ={Tits, Jacques},
title={Twin trees I},
journal={Invent. Math.},
volume={116},
date={1994},
pages={463-479}
}

\bib{RT99}{article}{
author={Ronan, Mark},
author={Tits, Jacques},
title={Twin trees II: Local structure and a universal construction},
journal={Israel J. of Mathematics}
volume={109},
date={1999},
pages={349--377}} 
\bib{S1}{article}{
   author={Segev, Yoav},
   title={Finite special Moufang sets of odd characteristic},
   journal={Commun. Contemp. Math.},
   volume={10},
   date={2008},
   number={3},
   pages={455--475},
   }
\bib{S2}{article}{
   author={Segev, Yoav},
   title={Proper Moufang sets with abelian root groups are special},
   journal={J. Amer. Math. Soc.},
   volume={22},
   date={2009},
   number={3},
   pages={889--908},
   }  
   \bib{Serre}{book}{
   author={Serre, Jean-Pierre},
    title={Local fields},
   series={Graduate Texts in Mathematics 67},
   publisher={Springer-Verlag},
   place={Berlin},
   date={1979},}
\bib{SpringerVeldkamp}{book}{
   author={Springer, Tonny A.},
   author={Veldkamp, Ferdinand D.},
   title={Octonions, Jordan algebras and exceptional groups},
   series={Springer Monographs in Mathematics},
   publisher={Springer-Verlag},
   place={Berlin},
   date={2000},
}    
\bib{Tim}{book}{
   author={Timmesfeld, Franz Georg},
   title={Abstract root subgroups and simple groups of Lie type},
   series={Monographs in Mathematics},
   volume={95},
   publisher={Birkh\"auser Verlag},
   place={Basel},
   date={2001},
}
\bib{Tits3trans}{article}{
   author={Tits, Jacques},
   title={Sur les groupes triplement transitifs continus; g\'en\'eralisation
   d'un th\'eor\`eme de Ker\'ekj\'art\'o},
   language={French},
   journal={Compositio Math.},
   volume={9},
   date={1951},
   pages={85--96},
}
\bib{Tits_boulder}{article}{
   author={Tits, Jacques},
   title={Classification of algebraic semisimple groups},
   conference={
      title={Algebraic Groups and Discontinuous Subgroups (Proc. Sympos.
      Pure Math., Boulder, Colo., 1965)},
   },
   book={
      publisher={Amer. Math. Soc.},
      place={Providence, R.I., 1966},
   },
   date={1966},
   pages={33--62},
   }
\bib{Tits_corvallis}{article}{
   author={Tits, Jacques},
   title={Reductive groups over local fields},
   conference={
      title={Automorphic forms, representations and $L$-functions (Proc.
      Sympos. Pure Math., Oregon State Univ., Corvallis, Ore., 1977), Part
      1},
   },
   book={
      series={Proc. Sympos. Pure Math., XXXIII},
      publisher={Amer. Math. Soc.},
      place={Providence, R.I.},
   },
   date={1979},
   pages={29--69},
}
\bib{Tits}{article}{
   author={Tits, Jacques},
   title={Twin buildings and groups of Kac-Moody type},
   conference={
      title={Groups, combinatorics \& geometry},
      address={Durham},
      date={1990},
   },
   book={
      series={London Math. Soc. Lecture Note Ser.},
      volume={165},
      publisher={Cambridge Univ. Press},
      place={Cambridge},
   },
   date={1992},
   pages={249--286},
}   
\end{biblist}
\end{bibdiv}

\end{document}